\newcounter{commentcounter}
\renewcommand*{\backref}[1]{}
\renewcommand*{\backrefalt}[4]
{
    \ifcase #1
        No citation in the text.
    \or
        Cited on Page #2.
    \else
        Cited on Pages #2.
    \fi
}
\newtheorem{thm}{Theorem}[section]
\crefname{thm}{Theorem}{Theorems}
\newtheorem{lemma}[thm]{Lemma}
\crefname{lemma}{Lemma}{Lemmata}
\newtheorem{corollary}[thm]{Corollary}
\newtheorem{prop}[thm]{Proposition}
\newtheorem{claim}[thm]{Claim}
\newtheorem{question}[thm]{Question}
\numberwithin{equation}{thm}
\newtheorem{thmx}{Theorem}
\crefname{thmx}{Theorem}{Theorems}
\newtheorem{corx}[thmx]{Corollary}
\theoremstyle{definition}
\newtheorem{defn}[thm]{Definition}
\newtheorem{remark}[thm]{Remark}
\newtheorem{example}[thm]{Example}
\theoremstyle{plain}
\newtheorem*{ConjSinger}{The Singer Conjecture}
\newtheorem*{ConjAtiyah}{The Atiyah Conjecture}
    \newtheoremstyle{TheoremNum}
        {8.0pt plus 2.0pt minus 4.0pt}{8.0pt plus 2.0pt minus 4.0pt} 
        {\itshape} 
        {-7pt} 
        {\bfseries} 
        {.} 
        { }  
        {\thmname{#1}\thmnote{ \bfseries #3}}
    \theoremstyle{TheoremNum}
    \newtheorem{duplicate}{}
\newcommand*{\claimproofname}{My proof}
\newenvironment{claimproof}[1][\claimproofname]{\begin{proof}[#1]}{\end{proof}}
\newcommand*{\markproofname}{Proof}
\newenvironment{markproof}[1][\markproofname]{\begin{proof}[#1]}{\end{proof}}
\DeclareMathOperator{\Aut}{\mathrm{Aut}}
\DeclareMathOperator{\Hom}{\mathrm{Hom}}
\DeclareMathOperator{\Tor}{\mathrm{Tor}}
\DeclareMathOperator{\tr}{\mathrm{tr}}
\DeclareMathOperator{\im}{\mathrm{im}}
\DeclareMathOperator{\dom}{\mathrm{dom}}
\newcommand{\cald}{{\mathcal{D}}}
\newcommand{\caln}{{\mathcal{N}}}
\newcommand{\calr}{{\mathcal{R}}}
\newcommand{\calu}{{\mathcal{U}}}
\newcommand{\calx}{{\mathcal{X}}}
\newcommand{\SU}{\mathrm{SU}}
\newcommand{\SO}{\mathrm{SO}}
\DeclareMathOperator{\id}{id}
\newcommand{\onto}{\twoheadrightarrow}
\def\iff{if and only if }
\newcommand{\cd}{\mathrm{cd}}
\newcommand{\nov}[3]{{\widehat{#1 #2}^{#3}}}
\def\Z{\mathbb{Z}}
\newcommand{\NN}{\mathbb{N}}
\newcommand{\ZZ}{\mathbb{Z}}
\newcommand{\CC}{\mathbb{C}}
\newcommand{\RR}{\mathbb{R}}
\newcommand{\QQ}{\mathbb{Q}}
\newcommand{\FF}{\mathbb{F}}
\tikzstyle{blackNode}=[fill=black, draw=black, shape=circle]
\DeclareMathOperator{\nul}{null}
\DeclareMathOperator{\init}{init}
\DeclareMathOperator{\wrat}{WRat}
\DeclareMathOperator{\prewrat}{WRat_0}
\newcommand{\apa}{\approx}
\newcounter{dawidcomments}
\title{BNSR invariants and \texorpdfstring{$\ell^2$}{l2}-homology}
\author{Sam Hughes}
\author{Dawid Kielak}
\address{Mathematical Institute, Andrew Wiles Building, Observatory Quarter, University of Oxford, Oxford OX2 6GG, UK}
\email{sam.hughes@maths.ox.ac.uk}
\email{kielak@maths.ox.ac.uk}
\date{\today}
\subjclass[2020]{Primary 20J05; Secondary 57M07, 20F65, 16S85, 16S35, 57P10, 57N65}
\begin{document}

\begin{abstract}
	We prove that if the $n$th $\ell^2$-Betti number of a group is non-zero then its $n$th BNSR invariant over $\mathbb{Q}$ is empty, under suitable finiteness conditions.  We apply this to answer questions of Friedl--Vidussi and Llosa Isenrich--Py about aspherical K\"ahler manifolds, to verify some cases of the Singer Conjecture, and to compute certain BNSR invariants of poly-free and poly-surface groups.
\end{abstract}	
	
\dedicatory{Dedicated to the memory of Peter A.~Linnell.}	
	
\maketitle

\section{Introduction}

It was shown by L\"uck \cite{Lueck1994} that non-vanishing $\ell^2$-Betti numbers $b_n^{(2)}$ obstruct finite CW complexes being homotopic to mapping tori.  In the case of the classifying space of a group $G$, this can be restated in terms of finiteness properties of the kernel of a homomorphism $G\onto \Z$ (see \cite[Theorem~6.63]{Lueck2002} for the statement regarding connected CW complexes).

\begin{thm}[L\"uck]Let $G$ be a group of type $\mathsf{F}_n$, that is admitting a classifying space with finite $n$-skeleton, and let $\varphi\colon G\onto \Z$ be an epimorphism.  If $b_n^{(2)}(G)\neq 0$, then $\ker\varphi$ is not of type $\mathsf{F}_n$.\end{thm}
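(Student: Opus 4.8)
\begin{claimproof}
The plan is to prove the contrapositive: assuming $K:=\ker\varphi$ is of type $\mathsf{F}_n$, I will show $b_n^{(2)}(G)=0$. Since $\ZZ$ is free, the extension $1\to K\to G\to\ZZ\to1$ splits; fix $t\in G$ with $\varphi(t)=1$, so that $G=K\rtimes_\phi\ZZ$ with $\phi$ conjugation by $t$. Next I would build a convenient model for $BG$: take a model of $BK$ with finite $n$-skeleton (available since $K$ is of type $\mathsf{F}_n$), realise $\phi$ by a cellular self-map $f\colon BK\to BK$ (which exists because $BK$ is aspherical, so pointed homotopy classes of self-maps of $BK$ are just $\Hom(K,K)$), and form the mapping torus $T_f$. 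A routine check --- via the Wang sequence with $\ZZ$-coefficients for the acyclicity of the universal cover, and van Kampen for $\pi_1$ --- shows $T_f$ is a model for $BG$, and it visibly has finite $n$-skeleton; in particular $G$ is itself of type $\mathsf{F}_n$.

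The mapping torus structure feeds into the Wang long exact sequence in $\ell^2$-homology. Replacing $C_*(\widetilde{T_f})$ by a chain homotopy equivalent complex of free $\ZZ K$-modules that is finitely generated in degrees $\le n$ and carries a $\phi$-semilinear self-equivalence lifting $f$, the mapping-torus (prism) construction yields, after applying $\mathcal{N}(G)\otimes_{\ZZ G}(-)$, a long exact sequence of $\mathcal{N}(G)$-modules
\[
\cdots\to\Tor_j^{\ZZ K}(\mathcal{N}(G),\ZZ)\xrightarrow{\ \id-f_*\ }\Tor_j^{\ZZ K}(\mathcal{N}(G),\ZZ)\to H_j^{(2)}(G)\to\Tor_{j-1}^{\ZZ K}(\mathcal{N}(G),\ZZ)\to\cdots,
\]
where $f_*$ denotes the map induced by the semilinear lift of $f$. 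Because induction from $K$ to $G$ preserves von Neumann dimension, $\dim_{\mathcal{N}(G)}\Tor_j^{\ZZ K}(\mathcal{N}(G),\ZZ)=b_j^{(2)}(K)$, which is finite for $j\le n$. Writing $H_j:=\Tor_j^{\ZZ K}(\mathcal{N}(G),\ZZ)$ and pushing additivity of $\dim_{\mathcal{N}(G)}$ through the exact sequence gives
\[
b_n^{(2)}(G)=\dim_{\mathcal{N}(G)}\coker\bigl((\id-f_*)|_{H_n}\bigr)+\dim_{\mathcal{N}(G)}\ker\bigl((\id-f_*)|_{H_{n-1}}\bigr).
\]
Since $\dim\ker=\dim\coker$ for an endomorphism of a finite-dimensional $\mathcal{N}(G)$-module, it therefore suffices to show that $\dim_{\mathcal{N}(G)}\ker\bigl((\id-f_*)|_{H_j}\bigr)=0$ for $j=n$ and $j=n-1$.

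This last point is the crux, and the place where I expect the real work to lie. Under the identifications above, $f_*$ is, up to the twist by $\phi$, left multiplication by $t$ on the induced module $\mathcal{N}(G)\otimes_{\ZZ K}(-)$; the key fact is that $1-t$ is a non-zero-divisor in $\mathcal{N}(G)$ whenever $t$ has infinite order --- if $(1-t)a=0$ then $a\delta_e\in\ell^2(G)$ is invariant under left translation by $t$, hence constant on right $\langle t\rangle$-cosets, hence $0$. From this one gets that $\id-f_*$ is injective with $\dim$-trivial cokernel already at the chain level; the delicate step is that this survives to homology, the mechanism being that $\mathcal{N}(G)$ is $\dim$-flat over $\ZZ[t^{\pm1}]$, i.e. tensoring against $0\to\ZZ[t^{\pm1}]\xrightarrow{1-t}\ZZ[t^{\pm1}]\to\ZZ\to0$ leaves $\dim_{\mathcal{N}(G)}$ unchanged, so the resulting long exact sequence forces $\dim_{\mathcal{N}(G)}\ker\bigl((\id-f_*)|_{H_j}\bigr)=0$ for all $j$. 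Hence $b_n^{(2)}(G)=0$, contradicting the hypothesis $b_n^{(2)}(G)\neq0$. The main obstacle is thus this transfer of $\dim$-injectivity of $\id-f_*$ from chains to homology, together with making precise the semilinear $\ZZ G$-module bookkeeping in the Wang sequence so that ``$\id-f_*$'' really is the multiplication-by-$(1-t)$ map it ought to be; both are handled by L\"uck's dimension theory for arbitrary $\mathcal{N}(G)$-modules.
\end{claimproof}
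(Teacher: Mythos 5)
Your first two paragraphs are fine, but the third paragraph — which you correctly identify as the crux — contains a genuine gap rather than a proof. The claim you need is $\dim_{\caln G}\ker\bigl((\id-f_*)|_{H_j}\bigr)=0$, and neither mechanism you invoke delivers it. The fact that $1-t$ is a non-zero-divisor in $\caln G$ is a statement about the free chain modules (and even there the relevant map is $\id - t\cdot F_p$ for a matrix $F_p$ over $\ZZ K$ coming from the semilinear lift, not scalar multiplication by $1-t$, so one already needs something like Linnell's theorem to get chain-level injectivity). Dimension-flatness of $\caln G$ over $\ZZ[t^{\pm1}]$ controls $\dim_{\caln G}\Tor_p^{\ZZ[t^{\pm1}]}(M,\caln G)$ for $\ZZ[t^{\pm1}]$-modules $M$; what you actually need is $\dim_{\caln G}\Tor_1^{\ZZ[t^{\pm1}]}(\ZZ,N)=0$ for $N=H_j(K;\caln G)$, and $N$ is not of the form $M\otimes_{\ZZ[t^{\pm1}]}\caln G$, so dimension-flatness does not apply. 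There is no general principle transporting dim-injectivity of a chain endomorphism to homology, and the statement $\dim\ker\bigl((\id-f_*)|_{H_j}\bigr)=0$ is, given the Wang sequence and additivity, essentially equivalent to the theorem itself. Controlling such twisted $(1-t)$-type maps on homology rather than on free modules is exactly the kind of difficulty the present paper devotes \Cref{sec ring hom} to, so it cannot be waved away as ``handled by L\"uck's dimension theory''.

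The cited proof (see \cite[Theorem~6.63]{Lueck2002}) avoids this entirely, and you are one observation away from it: your first paragraph already constructs the mapping torus model $BG\simeq T_f$, which has $c_p+c_{p-1}$ cells in each dimension $p\le n$, where $c_p$ is the number of $p$-cells of $BK$. For each $d\ge1$ the subgroup $G_d=\varphi^{-1}(d\ZZ)$ has index $d$, is again of the form $K\rtimes\ZZ$, and has $BG_d\simeq T_{f^d}$ with the \emph{same} cell counts $c_p+c_{p-1}$, independent of $d$. Since $b_p^{(2)}$ is bounded above by the number of $p$-cells, and since $b_p^{(2)}(G_d)=d\cdot b_p^{(2)}(G)$ by multiplicativity under passage to finite-index subgroups, we get $b_p^{(2)}(G)\le (c_p+c_{p-1})/d$ for every $d$, whence $b_p^{(2)}(G)=0$ for all $p\le n$. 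I recommend replacing your third paragraph with this stretching argument; alternatively, if you wish to keep the Wang-sequence route, you must supply an actual proof of the vanishing of $\dim\ker\bigl((\id-f_*)|_{H_j}\bigr)$, which requires input of the strength of the Ore-type arguments developed in this paper or Farber's analytic methods.
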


Bieri, Neumann, Renz, and Strebel \cite{BieriNeumannStrebel1987,BieriRenz1988} introduced a family of group invariants which are intimately related to the finiteness properties of kernels of homomorphisms $G\to\RR$.  In spite of this, the precise relation between the $n$th $\ell^2$-Betti number and the $n$th BNSR invariants $\Sigma^n$ has remained mysterious.  Indeed, the question is rather subtle:
for $n=1$, Brown \cite{Brown1987} gave a characterisation of the integral characters $G \onto \Z$ in $\Sigma^1(G)$ as those that are induced by $G$ splitting as an ascending HNN extension over a finitely generated group; such a splitting gives $G$ the structure of an algebraic mapping torus. There is however no such characterisation for $n>1$; the best result in this direction seems to be a theorem of Renz for $n=2$  \cite{Renz1989}.
Our main theorems clarify the link between the BNSR invariants and $\ell^2$-Betti numbers in both the cases of groups and spaces.

\begin{duplicate}[\Cref{thm groups}]
Let $G$ be a group of type $\mathsf{FP}_n(\QQ)$.  If $b_n^{(2)}(G)\neq0$, then $\Sigma^n(G;\QQ)=\emptyset$.
\end{duplicate}

Recall that type $\mathsf{FP}_n$ is a homological version of type $\mathsf F_n$; we give a formal definition in \cref{sec BNSR}.

The theorem was already known for RFRS groups (see \cite[Theorem~5.10]{HughesKielak2022} building on \cite{Kielak2020RFRS} and \cite{Fisher2021}). It is also significantly easier to establish this theorem when $G$ is torsion free and known to satisfy the Atiyah conjecture.  We have an analogous statement for CW complexes.

\setcounter{thmx}{1}
\begin{thmx}\label{thm spaces}
Let $X$ be a connected CW complex with finite $n$-skeleton.  If $b_n^{(2)}(\widetilde X;\pi_1X)\neq0$, then $\Sigma^n(X;\QQ)=\emptyset$.
\end{thmx}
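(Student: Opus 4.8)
The plan is to run the proof of \Cref{thm groups} with the cellular chain complex $C_\bullet(\widetilde X;\QQ)$ of the universal cover playing the role of a projective $\QQ[\pi_1X]$-resolution of $\QQ$. One cannot simply quote \cref{thm groups}: the group $\pi_1X$ need not be of type $\mathsf{FP}_n(\QQ)$, and even when it is, $b_n^{(2)}(\widetilde X;\pi_1X)$ differs from $b_n^{(2)}(\pi_1X)$ in general --- for instance, wedging an $n$-sphere onto $X$ changes the former but not the latter. What saves us is that both quantities in the statement are invariants of $C_\bullet(\widetilde X;\QQ)$ viewed as a chain complex of $\QQ[\pi_1X]$-modules: $b_n^{(2)}(\widetilde X;\pi_1X)$ is the $\mathcal N(\pi_1X)$-dimension of the $n$th homology of $\mathcal N(\pi_1X)\otimes_{\QQ[\pi_1X]}C_\bullet(\widetilde X)$, while, by a Sikorav-type reformulation of the BNSR invariants (see \cref{sec BNSR}), membership $\chi\in\Sigma^n(X;\QQ)$ is equivalent to the vanishing of the Novikov homology of $C_\bullet(\widetilde X)$ in degrees $\leq n$. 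The hypothesis that $X$ has finite $n$-skeleton says precisely that $C_i(\widetilde X;\QQ)$ is finitely generated free over $\QQ[\pi_1X]$ for $i\leq n$ --- exactly the finiteness that the argument of \cref{thm groups} uses in that range --- and the (possibly infinitely many) cells in higher dimensions cost us nothing, since a resolution witnessing type $\mathsf{FP}_n(\QQ)$ is likewise required to be finitely generated only through degree $n$.

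First I would reduce to integral characters: $\Sigma^n(X;\QQ)$ is an open, scale-invariant subset of the character sphere $S(X)=(H^1(X;\RR)\setminus\{0\})/\RR_{>0}$, so it is empty as soon as it contains no rational character, and hence it suffices to show that no epimorphism $\chi\colon\pi_1X\onto\ZZ$ lies in it. Fixing such a $\chi$ and choosing a splitting $\pi_1X=\ker\chi\rtimes\ZZ$ (possible since $\ZZ$ is free), the complex $C_\bullet(\widetilde X;\QQ)$ becomes a chain complex over the twisted Laurent ring $\QQ[\ker\chi][t^{\pm1};\alpha]$ that is finitely generated free through degree $n$, with the relevant Novikov ring being the completion of this ring in the direction of $\chi$ --- precisely the situation treated in the proof of \cref{thm groups}.

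The single property of a group resolution that $C_\bullet(\widetilde X;\QQ)$ lacks, and that the proof of \cref{thm groups} might genuinely invoke, is acyclicity in positive degrees. I would remove the dependence on it by induction on $n$, exploiting the nesting $\Sigma^n(X;\QQ)\subseteq\Sigma^{n-1}(X;\QQ)$: if $b_i^{(2)}(\widetilde X;\pi_1X)\neq0$ for some $i<n$, then $\Sigma^i(X;\QQ)=\emptyset$ by the inductive hypothesis (applied to $X$, which also has finite $i$-skeleton), forcing $\Sigma^n(X;\QQ)\subseteq\Sigma^i(X;\QQ)=\emptyset$; so one may assume $C_\bullet(\widetilde X;\QQ)$ is $\ell^2$-acyclic in all degrees below $n$, which is all that passage to the group von Neumann algebra retains of the acyclicity of a resolution (the base case $n=1$ being routine, and in any case reducible to \cref{thm groups}). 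The main obstacle, and where I expect the real work to lie, is exactly this last step: checking that, once the lower $\ell^2$-Betti numbers have been made to vanish, the comparison between Novikov homology and $\ell^2$-homology at the heart of \cref{thm groups} goes through for $C_\bullet(\widetilde X;\QQ)$ with no residual use of genuine $\QQ$-acyclicity in intermediate degrees --- and, should such a use surface, patching it by replacing $C_\bullet(\widetilde X;\QQ)$ below degree $n$ with a finitely generated complex that is acyclic there (obtained by coning off the low-dimensional homology), which alters neither $H_n$ nor the Novikov homology in the range that matters.
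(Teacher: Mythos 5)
Your proposal is correct and is essentially the paper's argument: the paper proves \Cref{thm spaces} by running the proof of \Cref{thm groups} verbatim with $C_\bullet(\widetilde X;\QQ)$ in place of the free resolution, reducing to integral characters via openness of the BNSR invariants and invoking \Cref{prop complexes} together with Sikorav's theorem. Your worry about acyclicity is moot, since \Cref{prop complexes} (and the chain-contraction machinery behind it) is stated for an arbitrary chain complex of free $\QQ G$-modules that is finitely generated up to degree $n$, with no exactness hypothesis; consequently neither your induction on $n$ nor the coning-off fallback is needed --- which is just as well, since attaching cells to kill low-dimensional homology would in general alter the Novikov homology one degree up, so that patch would require real justification.
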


A special case of the above result with $X$ being a closed smooth manifold can be deduced from work of Farber \cite[Corollary~2]{Farber2000} (based on \cite{NovikovShubin1986}).  The methods used here are of a very different nature: they are ring-theoretic rather than analytic, as is the case of Farber.

For a commutative ring $R$, recall that an \emph{$n$-dimensional Poincar\'e duality complex $M$ over $R$} (or a \emph{$\mathsf{PD}_n(R)$-complex}) is a finitely dominated connected CW complex with a distinguished class $[M]$ in the $n$th homology group $H_n(M;D)$, where $D$ is an $RG$ module isomorphic to $R$ as a group, such that the cap product
\[[M]\frown-\colon H^k(M;A)\to H_{n-k}(M;A \otimes_{R} D)\]
 is an isomorphism for all choices of local coefficients $A$ over $R$, where the action on $A \otimes_R D$ is diagonal.  Similarly, a \emph{$\mathsf{PD}_n(R)$-group} is a group of type $\mathsf{FP}(R)$ with cohomological dimension $\cd_R(G)=n$ for which $D=H^n(G;RG)$ is isomorphic to $R$ as an abelian group and $H^i(G;M)\cong H_{n-i}(G;M\otimes_R D)$ for all $RG$-modules $M$ (here $M\otimes_R D$ is equipped with the diagonal action).

Note that the fundamental group of an aspherical $\mathsf{PD}_n(R)$-complex is a finitely presented $\mathsf{PD}_n(R)$-group, but for $n\geq4$ there exist examples of $\mathsf{PD}_n(R)$-groups which are not finitely presented \cite{Davis1998,Davis1998c} (in fact there are uncountably many such groups \cite{Leary2018,KrophollerLearySoroko2020}).

The motivation for the authors to begin investigating the connection between vanishing of the $n$th BNSR invariants (over $\QQ$) and non-vanishing of the $n$th $\ell^2$-Betti number was provided by the following question which appeared in \cite[remark after Proposition~3.4]{FriedlVidussi2021} and \cite[Question~7]{IsenrichPy2022}.

\begin{question}[Friedl--Vidussi, Llosa Isenrich--Py]\label{q motivation}
	Let $M$ be a closed aspherical K{\"a}hler $2n$-manifold.  If $\chi(M)\neq0$, is $\Sigma^n(\pi_1M)$ empty?
\end{question}

In fact we are able to give an affirmative answer to the more general question obtained  by dropping the hypotheses `aspherical' and `K{\"a}hler' and instead studying the BNSR invariants of the manifold.

\begin{duplicate}[\Cref{cor manifolds}]
	Let $M$ be a closed connected $2n$-manifold or (more generally) a finite $\mathsf{PD}_{2n}(\QQ)$-complex.
	If $\chi(M)\neq 0$,
	then $\Sigma^n(M)=\Sigma^n(M;\QQ)=\emptyset$.
	In particular, if $M$ is additionally aspherical, then
	$\Sigma^n(\pi_1M)=\Sigma^n(\pi_1M;\QQ)=\emptyset$.
\end{duplicate}

\begin{remark}
	A near-identical proof yields the following result: If $G$ is a $\mathsf{PD}_{2n}(\QQ)$-group such that $\chi(G)\neq0$, then $\Sigma^n(G;\QQ)=\emptyset$.
\end{remark}

\smallskip
The \emph{Singer Conjecture} is one of the major unresolved problems regarding $\ell^2$-Betti numbers and the topology of manifolds.  For more information the reader is directed to \cite[Chapter~11]{Lueck2002}.

\begin{ConjSinger}
If $M$ is a closed aspherical $n$-manifold, then $b^{(2)}_p(\widetilde{M};\pi_1 M)=0$ for all $p\neq n/2$.
\end{ConjSinger}

The Singer Conjecture can clearly be generalised to $\mathsf{PD}_n(\QQ)$-groups; in this setting we have the following corollary.

\begin{duplicate}[\Cref{cor Singer}]
Let $G$ be a $\mathsf{PD}_n(\QQ)$-group and let $k=\lceil n/2\rceil-1$.  If $\Sigma^{k}(G;\QQ)\neq \emptyset$, then the Singer Conjecture holds for $G$.
\end{duplicate}

\subsection*{The idea behind the proofs}

The main technical result is \cref{prop complexes}, in which we prove that if a chain complex over a group ring $\QQ G$ has vanishing Novikov homology up to some dimension, then its $\ell^2$-homology vanishes as well, up to the same dimension. Novikov homology has been shown by Sikorav to encode the BNSR invariants, and $\ell^2$-homology can also be understood algebraically, as homology with coefficients in the algebra $\calu G$ of operators affiliated to the group von Neumann algebra of $G$. Hence, we are trying to show that if we have a partial chain contraction defined over the Novikov ring, then we also have one over $\calu G$.
We do this in two steps: first, in \cref{chain contractions} we show that one does not need to work with the whole Novikov ring, but instead it is enough to use the division closure of $\QQ G$ inside the Novikov ring. Then, in \cref{Lambdas}, we show that there is a ring that we call the ring of \emph{weakly rational elements} of $\calu G$, that is simultaneously a subring of $\calu G$, and an overring of the division closure of $\QQ G$ in the Novikov ring.

The construction of the ring of weakly rational elements is where the blood, sweat, tears, and toil went.
In an ideal world, for example when $G$ is torsion free and satisfies the Atiyah conjecture, one argues as follows: the Novikov ring corresponding to $\phi \colon G \to \Z$ consists naturally of twisted Laurent power series with coefficients in $\QQ \ker \phi$, and so in particular in the Linnell skew field $\cald \ker \phi$, a particularly useful subring of $\calu \ker \phi$. Since twisted Laurent polynomials over $\cald \ker \phi$ satisfy the Ore condition, one can look at the rational functions, and it is not hard to show that, on the one hand, such rational functions contain the division closure of $\QQ G$ inside the Novikov ring, and on the other, are naturally contained in $\calu G$, even though the ring of Laurent power series is not. Unfortunately, the authors were unable to show that the twisted Laurent polynomial ring satisfies the Ore condition -- this remains an open problem. Instead, we show that this ring satisfies an approximate version of the Ore condition, which allows us to form a ring of `approximate rational functions', that is precisely the ring of weakly rational elements.

\subsection*{Outline of the paper}
In \Cref{sec prelims}, we give the relevant background on $\ell^2$-homology, BNSR theory, some ring theoretic tools, and several rings and algebras related to a countable discrete group that we will need in our proofs, with particular emphasis on group von Neumann algebras.

In \Cref{sec passing vanishing}, we prove that vanishing of Novikov homology can be passed to a division-closed subring.
We highlight two results of independent interest: The first is \Cref{nov DivClos is RatClos} that states that for a discrete group $G$ the division closure of $RG$ in a Novikov ring $\nov{R}{G}{\varphi}$ is equal to its rational closure.  The second is \Cref{inductive chain contraction} that gives a method for taking chain contractions over $\nov{R}{G}{\varphi}$ and rebuilding them over the division closure of $RG$.

\Cref{sec ring hom}
is the technical heart of the paper. Here we introduce the ring of weakly rational elements of $\calu G$, and prove that it contains the division closure of $\QQ G$ inside of the Novikov ring.

In \Cref{sec proofs}, we prove each of the results \ref{thm groups} - \ref{cor Singer}.

In \Cref{sec Atiyah}, we prove a characteristic $p$ version of \Cref{thm groups} and \Cref{thm spaces} (see \Cref{thm char p}) for groups whose group rings admit embeddings into certain universal skew fields.  We also give a simplified proof of \ref{thm groups} - \ref{cor Singer} when $G$ is torsion free and satisfies the Atiyah Conjecture.

In \Cref{sec ex}, we detail a number of example applications of our results.  In \Cref{polycule} we prove that $\Sigma^n(G;\QQ)=\emptyset$ when $G$ is a poly-elementarily-free group of length $n$.  This class includes poly-free and poly-surface groups.  The result generalises \cite[Proposition~1.5]{KochloukovaVidussi2022} where the authors compute $\Sigma^2(G)$ for groups isomorphic to $F_n\rtimes F_m$ or $\pi_1\Sigma_g\rtimes\pi_1\Sigma_h$ with $n,m,g,h>1$ (see the first part of \cite[Theorem~6.1]{KrophollerWalsh2019} for the case $F_2\rtimes F_m$); here $\Sigma_g$ is the closed orientable connected surface of genus $g$. 
  We also highlight how our \Cref{thm groups} applies to real and complex hyperbolic lattices.

\subsection*{Acknowledgements}
This work has received funding from the European Research Council (ERC) under the European Union's Horizon 2020 research and innovation programme (Grant agreement No. 850930).

  The authors thank Claudio Llosa Isenrich for alerting them to  \Cref{q motivation}.

\section{Preliminaries}\label{sec prelims}

Throughout, all rings are assumed to be associative and unital. All groups will be discrete, and modules will be right modules, unless stated otherwise.

\subsection{Group von Neumann algebras}\label{sec NG}
Let $G$ be a countable group and let $\ell^2 G$ denote the Hilbert space of square summable formal sums of elements of $G$ with complex coefficients, that is the space of expressions
\[\sum_{g\in G}\lambda_g g \, \text{ such that }\sum_{g\in G}|\lambda_g|^2<\infty\text{ where }\lambda_g\in \CC. \]
The group $G$ acts on $\ell^2 G$ by right multiplication.
\begin{defn}[Group von Neumann algebra]
We define the \emph{group von Neumann algebra} $\caln G$ of $G$ to be the algebra of $G$-equivariant bounded operators $\ell^2 G\to\ell^2 G$.
\end{defn}

Since $G$ acts on $\ell^2 G$ from the right, it is natural to view $\caln G$ as operating from the left on $\ell^2 G$. In particular, the copy of $\CC G$ in $\caln G$ acts on $\ell^2 G$ from the left. Since $\caln G$ contains a copy of $\CC G$, multiplication turns it into a $\CC G$-bimodule.

One of the key features of group von Neumann algebras is that for every closed $G$-invariant subspace $V$ of $\ell^2 G$, there is an associated projection $\pi_V \in \caln G$; it is a self-adjoint idempotent with $\im \pi_V = V$ and $\ker \pi_V = V^\perp$.

Another important class of operators in $\caln G$ are \emph{partial isometries}, that is operators $u$ such that $u^*u = \pi_{(\ker u )^\perp}$. It is easy to see that this implies that $\im u$ is closed and that $uu^* = \pi_{\im u}$.


\begin{defn}[Polar decomposition]
	The \emph{polar decomposition} of  $s\in \caln G$ is a canonical factorization $s=vp$ where $v$ is a partial isometry and $p$ is a non-negative operator, and both lie in $\caln G$. Furthermore, $v^*v$ is the projection onto $\ker s^\perp$, and $vv^*$ is the projection onto the closure of $\im s$, whence it easily follows that $\ker v = \ker s$ and $\im v$ is the closure of $\im s$.
\end{defn}

The group von Neumann algebra comes equipped with the \emph{von Neumann trace} $\tr$. There is a canonical dimension function $\dim_{\caln G}$, taking values in $[0,\infty]$, defined on $\caln G$-modules called the \emph{von Neumann dimension} - the precise definition and basic properties we use can be found in \cite[Chapter~6]{Lueck2002}. One can also define such a dimension for closed $G$-invariant subspaces of $\ell^2 G$, in which case it is equal to the von Neumann trace of the corresponding projection. Since the only projection with zero trace is $0$, the only  $G$-invariant subspace of $\ell^2 G$ of von Neumann dimension zero is the trivial subspace.

We will also use the following theorem.

\begin{thm}[Linnell]\emph{\cite[Theorem~4]{Linnell1992}}\label{Linnell zero divs}
Let $H\trianglelefteq G$ be such that $G/H$ is right orderable with total order $\leq$ and let $\varphi\colon G\twoheadrightarrow G/H$ be the natural epimorphism.  Let $T$ be a transveral for $H$ in $G$, let $x\in\ell^2 G$, and write $x=\sum_{t\in T}x_t t$ where $x_t\in\ell^2 H$ for all $t\in T$.  Suppose that there exists $t_0\in T$ such that $x_{t}=0$ for $t$ with $\varphi(t)<\varphi(t_0)$.  If $x_{t_0} y'\neq 0$ for all non-zero $y'\in\caln H$, then $xy\neq 0$ for all non-zero $y\in\ell^2 G$.
\end{thm}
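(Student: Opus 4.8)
The plan is to reduce the statement, in three moves, to a ``Hardy-type'' injectivity assertion whose core is genuinely analytic.

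\emph{Step 1: normalise so that $t_0 = 1$.} First I would replace $x$ by $xt_0^{-1}$ and $T$ by the transversal $Tt_0^{-1}$ (still a transversal, since $H\trianglelefteq G$). As the order on $G/H$ is invariant under right translation, $\varphi(t)<\varphi(t_0)$ holds if and only if $\varphi(tt_0^{-1})<e$, so after this replacement $x$ is supported on cosets $\gamma$ with $\gamma\geq e$, and its $1$-component (the part in $\ell^2 H$) is exactly $x_{t_0}$. Since left and right multiplication by the group element $t_0$ are unitaries of $\ell^2 G$ and $xy=(xt_0^{-1})(t_0y)$, the conclusion for $x$ is equivalent to the conclusion for $xt_0^{-1}$. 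So I may assume $t_0=1\in T$ and write $x=x_1+x_+$ with $x_1\in\ell^2 H$ and $x_+$ supported on cosets $\gamma>e$.

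\emph{Step 2: invert the leading coefficient.} The hypothesis ``$x_1y'\neq0$ for every nonzero $y'\in\caln H$'' says precisely that the right annihilator of $x_1$ inside $\caln H$ is trivial. Since $\caln H$ is a finite von Neumann algebra and $\calu H$ is von Neumann regular, a nonzero right annihilator of $x_1$ in $\calu H$ would contain a nonzero idempotent, which may be chosen to be a projection and hence already lies in $\caln H$; so the right annihilator of $x_1$ in $\calu H$ is trivial, and an element of a von Neumann regular ring with trivial right annihilator is invertible. Thus $x_1$ is a unit of $\calu H\leq\calu G$. Because $\calu H$, acting by left multiplication, preserves each coset component $\ell^2 Ht$ of $\ell^2 G$ (the relevant projection lies in the commutant of $\caln H$), the element $x_1^{-1}x\in\calu G$ has the same coset support as $x$, hence equals $1+z$ where $z=x_1^{-1}x_+$ is \emph{strictly coset-raising}: it carries the $\gamma$-component of $\ell^2 G$ into the closed span of the $\delta$-components with $\delta>\gamma$. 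As $x_1$ is a non-zero-divisor in $\calu G$, it remains only to prove that $1+z$ is injective on $\ell^2 G$.

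\emph{Step 3: the core injectivity.} Suppose $y\in\ell^2 G$ with $(1+z)y=0$, that is, $y=-zy$, and $y\neq0$; let $S\subseteq G/H$ be the nonempty set of cosets carrying a nonzero component of $y$. If $S$ had a minimum $\gamma_0$, then $y$ is supported on $\{\gamma\geq\gamma_0\}$, so $zy$ is supported on $\{\gamma>\gamma_0\}$, and comparing $\gamma_0$-components yields $y_{\gamma_0}=-(zy)_{\gamma_0}=0$, contradicting $\gamma_0\in S$. The genuinely difficult case is when $S$ has no minimum — this already occurs for $G/H=\ZZ$, where the assertion reduces to the classical fact that a nonzero function in the Hardy space of the circle can vanish only on a null set. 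My plan here would be to show that, without a bottom coset, the relation $y=-zy$ iterates to push the mass of $y$ into arbitrarily high cosets while $y$ is held fixed, which is incompatible with $\|y\|_{\ell^2 G}<\infty$; making this heuristic rigorous \emph{uniformly} over all right orders on $G/H$ — and with $\caln H$ possibly a noncommutative fibre, so that classical Hardy theory is unavailable — requires the von Neumann trace together with a delicate approximation, or transfinite-induction, argument. I expect this last step to be the main obstacle, and it is the technical heart of Linnell's proof.
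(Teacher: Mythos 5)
First, a point of order: the paper does not prove this statement at all --- it is imported verbatim as \cite[Theorem~4]{Linnell1992} and used as a black box (e.g.\ in the proof of \cref{Linnell cor}), so there is no internal proof to compare yours against. Judged on its own terms, your attempt is a reasonable reduction followed by a genuine gap, which you yourself flag. Steps 1 and 2 are sound in outline: right translation by $t_0^{-1}$ is a unitary compatible with the right-invariant order, and the hypothesis on $x_{t_0}$ does force $x_{t_0}$ to be invertible in $\calu H$ (the annihilator argument via projections is correct). There are unaddressed technicalities --- $x_1^{-1}$ is an unbounded affiliated operator, so ``$(1+z)y$'' and the passage from $xy=0$ to $(1+z)y=0$ require checking that $y$ lies in the relevant domains and that formal convolution is compatible with composition of affiliated operators --- but these are not the main problem.

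The main problem is that Step 3 is the entire content of the theorem, and your plan for it cannot work as described. When the support set $S$ has no minimum, iterating $y=-zy$ does not ``push mass into arbitrarily high cosets'': already for $G/H=\ZZ$, if the support of $y$ is unbounded below and $z$ is supported on strictly positive powers, the support of $zy$ is again unbounded below, so no bookkeeping of supports produces a contradiction with $\lVert y\rVert_2<\infty$. Indeed, in the base case $G=\ZZ$, $H=\{1\}$ the statement is precisely the assertion that a nonzero Hardy-space function on the circle is nonvanishing almost everywhere (so that $\hat x\hat y=0$ forces $\hat y=0$); this requires the F.~and~M.~Riesz circle of ideas and admits no combinatorial proof, and in the general setting the fibre $\caln H$ is noncommutative so even that tool is unavailable. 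Linnell's actual argument replaces the missing ``minimal coset'' by an approximate leading term controlled by the von Neumann trace over the right-ordered quotient; none of this is present in your write-up. The proposal should therefore be regarded as a correct normalisation plus an open core step, not a proof.
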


\subsection{\texorpdfstring{$\ell^2$}{l\texttwosuperior}-homology and Betti numbers}\label{sec l2Bno}

\begin{defn}[$\ell^2$-homology and Betti numbers]
Let $X$ be a $G$-CW complex. We define the \emph{$\ell^2$-homology} of $X$ with respect to $G$ as
\[H_p^G(X;\caln G)\coloneqq H_p\big(C_\bullet(X;\QQ)\otimes_{\QQ G} \caln G \big), \]
where $C_\bullet(X;\QQ)$ is the cellular chain complex of $X$ with rational coefficients considered as a complex of free $\QQ G$-modules.

We define the \emph{$\ell^2$-Betti numbers} of $X$ with respect to $G$ to be
\[b_p^{(2)}(X;G)\coloneqq \dim_{\caln G}H_p^G(X;\caln G). \]
The $\ell^2$-Betti numbers of a group $G$ are defined to be the $\ell^2$-Betti numbers of the universal free $G$-space $EG$.  A number of properties of $\ell^2$-Betti numbers can be found in \cite[Theorem~6.54]{Lueck2002}.
\end{defn}

The $\caln G$-dimension does not change when one qoutients an $\caln G$-module by its torsion.

There are two other ways of defining $\ell^2$-homology; for details, see \cite{Lueck1998}. One can look at the reduced homology of the complex
\[C_\bullet(X;\QQ)\otimes_{\QQ G} \ell^2 G,\]
 where \emph{reduced} means that we divide kernels by closures of images. This way the homology groups are actually closed subspaces of powers of $\ell^2 G$, and one can look at the von Neumann dimension of such a subspace. The dimension coincides with the $\ell^2$-Betti number, as defined above.

The third way of defining $\ell^2$-homology involves the algebra of affiliated operators $\calu G$, which we will define in a moment. One can look at the homology of $C_\bullet(X;\QQ)\otimes_{\QQ G} \calu G$, which ends up being equal to the homology of $C_\bullet(X;\QQ)\otimes_{\QQ G} \caln G$ tensored with $\calu G$. This has essentially the effect of quotienting the $\ell^2$-homology as we defined it by the torsion. There is again a notion of a dimension for $\calu G$-modules, and again one obtains the same Betti numbers.

\subsection{Ore localisation}\label{sec Ore}
In this section we will describe an analogue of localisation for non-commutative rings.

\begin{defn}
	Let $R$ be a ring. An element $x \in R$ is a \emph{zero-divisor} if $x \neq 0$, and $xy = 0$ or $yx = 0$ for some non-zero $y \in R$. A non-zero element that is not a zero-divisor will be called \emph{regular}.
\end{defn}

\begin{defn}[Right Ore condition]
Let $R$ be a ring and $S\subseteq R$ a multiplicatively closed subset consisting of regular elements.  The pair $(R,S)$ satisfies the \emph{right Ore condition} if
for every $r\in R$ and $s\in S$ there are elements $r'\in R$ and $s'\in S$ satisfying $rs'= sr'$.
\end{defn}

\begin{defn}[Right Ore localisation]
If $(R,S)$ satisfies the right Ore condition we may define the \emph{right Ore localisation}, denoted $RS^{-1}$, to be the following ring.
Elements are represented by pairs $(r,s)\in R\times S$ up to the following equivalence relation: $(r,s)\sim(r',s')$ if and only if there exists $u,u'\in R$ such that the equations $ru=r'u'$ and $su=s'u'$ hold, and $su=s'u'$ belongs to $S$.
The addition is given by
\[(r,s)+(r',s')=(rc+r'd,t), \text{ where }t=sc=s'd\in S,  \]
and the multiplication is given by
\[(r,s)(r',s')=(rc,s't), \text{ where } sc=r't \text{ with }t\in S.\]
\end{defn}

There is a natural ring homomorphism $R\to RS^{-1}$ defined by $r\mapsto(r,1)$.  For more information on this construction the reader is referred to \cite[Section~4.4]{Passman1985}. Note that there is also an analogously defined \emph{left} Ore condition.

\subsection{The algebra of affiliated operators}\label{sec UG}

\begin{defn}[Affiliated operators]
We say that an operator
\[f\colon\dom(f) \to\ell^2 G\]
with $\dom(f)\subseteq\ell^2 G$ is \emph{affiliated} (to $\caln G$) if  $f$ is densely defined with domain $\dom(f)$, is closed, and is a $G$-operator, that is, $\dom(f)$ is a linear $G$-invariant subspace and $f(x).g=f(x.g)$ for all $g\in G$ (recall that $G$ acts on $\ell^2 G$ on the right).

The set of all operators affiliated to $\caln G$ forms the \emph{algebra of affiliated operators} $\calu G$ of $G$.
\end{defn}

Since an adjoint of a densely defined closed operator is densely defined and closed, every $x \in \calu G$ has a well-defined adjoint $x^* \in \calu G$.

Note that we have inclusions of $\QQ G$-modules \[\QQ G\rightarrowtail\CC G\rightarrowtail \caln G\rightarrowtail \calu G.\]
%
%
For more information on these constructions the reader is referred to \cite{Lueck2002} -- specifically Theorem~8.22 and more generally Chapter~8.  We highlight one theorem of special importance to us.

\begin{thm}\emph{\cite[Theorem~8.22(1)]{Lueck2002}}\label{Ore for von Neumann}
The set $S$ of regular elements of $\caln G$ forms a right Ore set.  Moreover, $\calu G$ is canonically isomorphic to $(\caln G) S^{-1}$.
\end{thm}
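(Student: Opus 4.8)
The plan is to build $\calu G$ by hand as a ring of (possibly unbounded) operators and then to recognise the Ore localisation $(\caln G)S^{-1}$ inside it. The one genuinely hard ingredient — which I would quote rather than reprove, citing \cite[Chapter~8]{Lueck2002} (ultimately going back to Murray--von Neumann) — is that the operators affiliated to a \emph{finite} von Neumann algebra form a $*$-algebra: if $f,g$ are densely defined closed $G$-operators, then the operators $f+g$ and $fg$, defined on the natural domains, are densely defined and closable, their closures are again affiliated, and under these ``strong'' operations $\calu G$ becomes a ring containing $\caln G$ as a subring with $x\mapsto x^*$ an involution. Finiteness of $\caln G$, witnessed by the faithful normal trace $\tr$, is exactly what makes this work; for general von Neumann algebras it fails.

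Granting this, the next step is to pin down the regular elements of $\caln G$. For $s\in\caln G$, the subspaces $\ker s$ and $(\overline{\im s})^\perp$ are closed and invariant under the right $G$-action, hence carry projections in $\caln G$; from this one reads off that $sy=0$ for some non-zero $y\in\caln G$ precisely when $\ker s\neq 0$, and $ys=0$ for some non-zero $y$ precisely when $\overline{\im s}\neq\ell^2 G$. So $s$ is regular exactly when it is both injective and has dense image. Using the polar decomposition $s=vp$, where $v^*v$ is the projection onto $(\ker s)^\perp$ and $vv^*$ the projection onto $\overline{\im s}$, traciality gives $\tr(v^*v)=\tr(vv^*)$, and since $\tr$ is faithful this shows that $\ker s=0$ holds if and only if $\overline{\im s}=\ell^2 G$; hence being regular is equivalent to being injective, and also to having dense image. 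For such $s$ the unbounded inverse $s^{-1}$ is a densely defined closed $G$-operator, so $s^{-1}\in\calu G$, and one checks directly that $ss^{-1}=s^{-1}s=1$ in $\calu G$. Thus $S$ is carried into the units of $\calu G$.

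The third step is that every $f\in\calu G$ can be written as $rs^{-1}$ with $r\in\caln G$ and $s\in S$: take $s=(f^*f+1)^{-1}$, which is bounded, injective, and has dense image (its range is $\dom(f^*f)$), hence lies in $S$; and $r=fs$ is bounded, since via the polar decomposition $f=v|f|$ it equals $v\,|f|(|f|^2+1)^{-1}$ and the function $t\mapsto t/(t^2+1)$ is bounded by $1/2$ on $[0,\infty)$, so $r\in\caln G$ and $f=rs^{-1}$. (A symmetric argument gives left fractions $t^{-1}q$.)

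It remains to assemble the statement. For the right Ore condition, given $r\in\caln G$ and $s\in S$, form $s^{-1}r\in\calu G$ and write it as $r'(s')^{-1}$ with $r'\in\caln G$ and $s'\in S$ by the third step; then $rs'=sr'$ with $s'\in S$, as required. Hence $(\caln G,S)$ satisfies the right Ore condition and the localisation $(\caln G)S^{-1}$ exists. The inclusion $\caln G\hookrightarrow\calu G$ sends $S$ to units, so by the universal property of Ore localisation it induces a ring homomorphism $(\caln G)S^{-1}\to\calu G$ sending the class of $(r,s)$ to $rs^{-1}$; this map is surjective by the third step, and injective because $rs^{-1}=0$ in $\calu G$ forces $r=0$ (multiply on the right by the unit $s$), so that $(r,s)\sim(0,1)$. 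Therefore it is a canonical isomorphism. As indicated, the sole serious obstacle is the first step — the $*$-algebra structure on the affiliated operators of a finite von Neumann algebra — and everything after that is bookkeeping with the trace, the projections $\pi_V$, and the Borel functional calculus.
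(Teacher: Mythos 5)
Your proof is correct, and it is essentially the argument given in the cited source (Lück's proof of Theorem~8.22 proceeds exactly via the $*$-algebra structure on affiliated operators, the characterisation of regular elements as the injective/dense-image ones via the trace and polar decomposition, and the fraction $f=\bigl(f(f^*f+1)^{-1}\bigr)\bigl((f^*f+1)^{-1}\bigr)^{-1}$); the paper itself only quotes the result, so there is nothing further to compare.
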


The algebra $\calu G$ is von Neumann regular, with explicit control on what the partial inverses look like. In particular, for every $x \in \calu G$ there exists a canonical $x^\dagger \in \calu G$ such that $x x^\dagger = \pi_{\overline{\im x}}$ as an affiliated operator. Note that in reality the composition $x \circ x^\dagger$ is defined only on $\im x \oplus (\im x)^\perp$, and coincides with $\pi_{\overline{\im x}}$ on this subspace. However, the graph of this composition is not closed, and hence the composition is not an affiliated operator. It can be extended to one, and this extension is precisely $\pi_{\overline{\im x}}$. Similarly, we have $x^\dagger x= \pi_{(\ker x)^\perp}$. We also have $\ker x^\dagger = (\im x)^\perp$ and $\overline{\im x^\dagger} = (\ker x)^\perp$.

The partial inverse can be constructed directly, as in the proof of \cite[Lemma 8.3(2)]{Lueck1994}, or its existence can be deduced algebraically, as in \cite[Proposition 3.2]{Jaikin2019}.


\subsection{Division and rational closures}

\begin{defn}[Division and rational closure]
	Let $R$ be a ring and $S$ a subring.
	We say that $S$ is \emph{division closed} if every element of $S$ invertible over $R$ is invertible over $S$. We say that $S$ is \emph{rationally closed} if every finite square matrix over  $S$ invertible over $R$ is invertible over $S$.
	
	Define the \emph{division closure of $S$ in $R$}, denoted by $\cald(S\subset R)$,  to be the smallest division-closed subring  of $R$ containing $S$. Define the \emph{rational closure of $S$ in $R$}, denoted by $\calr(S\subset R)$, to be the smallest rationally closed subring  of $R$ containing $S$.
\end{defn}

\subsection{Twisted polynomial rings}\label{sec twist}

\begin{defn}[Twisted polynomial ring]
Let $R$ be a ring 
 and let $R[t^{\pm1}]$ be the abelian group of Laurent polynomials over $R$.  Given a homomorphism $\nu\colon\Z\to \Aut(R)$ we may endow $R[t^{\pm1}]$ with a non-commutative multiplication given by
\[xt^m\cdot yt^n=x \nu(t^m)(y)t^{m+n}, \]
and extended linearly. We call this new ring the \emph{ring of twisted Laurent polynomials over $R$ with respect to $\nu$} or simply a \emph{twisted Laurent polynomial ring}.
\end{defn}

\begin{remark}\label{rem twisted RHZ}
Suppose that $G$ splits as $H\rtimes\Z$. The group ring $RG$ is isomorphic to a ring of twisted Laurent polynomials $RH[t^{\pm 1}]$ in a natural way.
\end{remark}

\subsection{BNSR invariants}\label{sec BNSR}
In this section we follow the treatment of Farber--Geoghegan--Sch\"utz \cite[Section~2]{FarberGeogheganSchuetz2010}.    Note that the authors only give statements and proofs for BNSR invariants over the ring $\ZZ$, however the arguments easily generalise to arbitrary rings $R$.

Let $R$ be a ring.  A monoid $M$ is of \emph{type $\mathsf{FP}_n(R)$} if there exists a projective resolution $P_\bullet\to R$ of the trivial $RM$-module $R$ with $P_i$ finitely generated for $i\leq n$.  Let $G$ be a finitely generated group.  Define $S(G)=\Hom(G;\RR)\smallsetminus\{0\}$.
Given $\varphi\in S(G)$, define a submonoid of $G$ by
\[{G}_{\varphi}\coloneqq\left\{ g\in G : \varphi(g)\geq 0\right\}.\]

\begin{defn}[Homological BNSR invariants of groups]
Let $R$ be a ring and $G$ be a group of type $\mathsf{FP_n}(R)$ for $n\in\NN\cup\{\infty\}$. For $\varphi\in S(G)$ we write $\varphi\in \Sigma^n(G;R)$ if $G_\varphi$ is of type $\mathsf{FP}_n(R)$.
\end{defn}

For $R$ a ring, $n\in\NN$, and $C_\bullet$ a chain complex over $R$, we say that $C_\bullet$ has \emph{finite $n$-type} if there exists a finitely generated chain complex $P_\bullet$ of projective $R$-modules and a chain map $f\colon C_\bullet\to P_\bullet$ such that $f_i\colon H_i(C_\bullet)\to H_i(P_\bullet)$ is an isomorphism for $i<n$ and an epimorphism for $i=n$.  In this case we call $f$ an \emph{$n$-equivalence}.

Let $C_\bullet$ be a chain complex of $RG$-modules and let $k\in\NN$.  We define
\[\Sigma^k(C_\bullet;R)\coloneqq\{\varphi\in S(G) : C_\bullet \text{ is of finite $k$-type over } RG_\varphi\}.\]

\begin{defn}[Homological BNSR invariants of spaces]
Let $X$ be a connected CW complex with finite $n$-skeleton.  For $k\leq n$ the \emph{$k$-th (homological) BNSR invariant of $X$ over $R$} is defined to be
\[\Sigma^k(X;R)\coloneqq \Sigma^k(C_\bullet(\widetilde{X};R);R), \]
where $\widetilde{X}$ is the universal cover of $X$, and $C_\bullet(-;R)$ denotes the cellular chain complex with coefficients in $R$.
\end{defn}

\begin{defn}[Homotopic BNSR invariants of groups and spaces via closed $1$-forms]
Let $X$ be a connected CW complex with finite $n$-skeleton and with $\pi_1 X=G$.  Let $\omega$ be a closed $1$-form on $X$ representing $\varphi\in S(G)$ and let $0\leq k \leq n$.  We define the \emph{$k$-th homotopic BNSR invariant of $X$} to be the set $\Sigma^k(X)\subseteq S(G)$ defined as follows:  $\varphi\in\Sigma^k(X)$ if there exists an $\epsilon>0$ and a cellular homotopy $H\colon X^{(k)}\times I\to X$ such that $H(x,0)=x$ and
\[\int_{\gamma_x}\omega \geq \epsilon\]
 for all $x\in X^{(k)}$, where $\gamma_x\colon[0,1]\to X$ is given by $\gamma_x(t)=H(x,t)$.
For a group $G$ we take $\Sigma^n(G)\coloneqq\Sigma^n(BG)$.
\end{defn}

The following theorem essentially combines Theorem~4, Proposition~3, and Corollaries~1 and 2 of \cite{FarberGeogheganSchuetz2010}; we have taken the liberty to state these results over a general ring $R$.

\begin{thm}[Basic properties of BNSR invariants] \label{thm props BNSR}
Let $X$ be a connected CW complex with finite $n$-skeleton and with $\pi_1 X=G$, and let $R$ be a ring.  Then,
\begin{enumerate}
    \item $\Sigma^k(X)$ and $\Sigma^k(X;R)$ are open subsets of $S(G)$; \label{thm props BNSR open}
    \item $\Sigma^k(X)\subseteq \Sigma^k(X;R)$; \label{thm props BNSR htpy subset hmlgy}
    \item if $\widetilde{X}$ is $k$-connected then
    \[\Sigma^k(X)=\Sigma^k(G)\text{ and }\Sigma^k(X;R)=\Sigma^k(G;R)\]
    and
    \[\Sigma^{k+1}(X)\subseteq\Sigma^{k+1}(G)\text{ and }\Sigma^{k+1}(X;R)\subseteq\Sigma^{k+1}(G;R);\]
    \item if $X$ is finite, then for all $k\geq\dim X$ we have \[\Sigma^k(X)=\Sigma^{\dim X}(X)\text{ and }\Sigma^k(X;R)=\Sigma^{\dim X}(X;R).\]
\end{enumerate}
\end{thm}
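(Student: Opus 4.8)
The plan is to obtain parts (1)--(4) from the corresponding results of Farber--Geoghegan--Sch\"utz --- namely Theorem~4, Proposition~3, and Corollaries~1 and~2 of \cite{FarberGeogheganSchuetz2010} --- checking at each step that their proofs use nothing about $\ZZ$ beyond its being a ring. For the homotopy invariant $\Sigma^k(X)$ this is essentially automatic: it is defined purely through closed $1$-forms and cellular homotopies of the (finite) $k$-skeleton, so no coefficient ring appears and the cited arguments apply verbatim, yielding the $\Sigma^k(X)$-halves of parts (1), (3) and (4). It therefore remains to deal with the homological invariant $\Sigma^k(X;R)$ and with the inclusion in part (2).

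For part (2) I would run Novikov's classical argument: a cellular homotopy $H$ of $X^{(k)}$ pushing every cell forward by at least $\epsilon$ along $\omega$ induces, on the cellular chain level, a partial chain homotopy in degrees $\le k$ defined over the Novikov ring $\nov{R}{G}{-\varphi}$. Since the cellular chain complex $C_\bullet(\widetilde X;R)$ is $C_\bullet(\widetilde X;\ZZ)\otimes_\ZZ R$, the same homotopy is available over $R$; combined with the homological Sikorav--Bieri--Renz criterion \cite{BieriRenz1988} (which is itself ring-insensitive) this gives $\varphi\in\Sigma^k(X;R)$, i.e.\ Proposition~3 of \cite{FarberGeogheganSchuetz2010} over $R$.

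For the $\Sigma^k(X;R)$-part of (1) I would invoke the Novikov-homology criterion: for a complex $C_\bullet$ of free $RG$-modules that is finitely generated through degree $k$, one has $\varphi\in\Sigma^k(C_\bullet;R)$ exactly when $H_i\bigl(C_\bullet\otimes_{RG}\nov{R}{G}{-\varphi}\bigr)$ vanishes in the appropriate range $i\le k$. A partial chain contraction witnessing this persists under all sufficiently small perturbations of $\varphi$ --- this is precisely the perturbation step in \cite{FarberGeogheganSchuetz2010} and uses no property of $R$ --- so $\Sigma^k(X;R)$ is open. For part (3): when $\widetilde X$ is $k$-connected, the Hurewicz theorem (with the universal-coefficient sequence to pass from $\ZZ$ to $R$) makes the augmented complex $C_\bullet(\widetilde X;R)\to R$ a partial free $RG$-resolution of the trivial module through degree $k$, with $\partial_{k+1}$ surjecting onto the degree-$k$ cycles. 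The standard comparison argument for partial resolutions --- valid over any ring --- then identifies $\Sigma^j(X;R)$ with $\Sigma^j(G;R)$ for $j\le k$, while in degree $k+1$ it gives only the inclusion $\Sigma^{k+1}(X;R)\subseteq\Sigma^{k+1}(G;R)$, since $C_\bullet(\widetilde X;R)$ need not be a resolution past degree $k$ (it may carry $H_{k+1}(\widetilde X;R)\neq 0$). Finally, for part (4): if $X$ is finite then $C_i(\widetilde X;R)=0$ for $i>\dim X$, and for such a complex being of finite $k$-type over $RG_\varphi$ is no condition beyond being of finite $(\dim X)$-type once $k\ge\dim X$, so $\Sigma^k(X;R)=\Sigma^{\dim X}(X;R)$.

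The only item with genuine content is the openness of the homological $\Sigma^k(X;R)$ in part (1); parts (2)--(4) are either homotopy-theoretic (hence ring-free) or routine homological algebra over an arbitrary ring. I expect the main --- and essentially the only --- obstacle to be expository: citing cleanly the ring-general forms of the Sikorav--Bieri--Renz Novikov-homology criterion and of the FGS perturbation lemma, since \cite{FarberGeogheganSchuetz2010} is written over $\ZZ$.
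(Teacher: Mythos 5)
Your proposal is correct and takes the same route as the paper, which in fact offers no proof beyond the sentence preceding the statement: it cites Theorem~4, Proposition~3, and Corollaries~1 and~2 of Farber--Geoghegan--Sch\"utz and asserts that their arguments generalise from $\ZZ$ to an arbitrary ring $R$ --- exactly the verification you carry out part by part. The only cosmetic discrepancy is your use of $\nov{R}{G}{-\varphi}$ where the paper's version of Sikorav's theorem uses $\nov{R}{G}{\varphi}$; this is a sign convention and does not affect the argument.
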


\subsection{Novikov--Sikorav homology}\label{sec Nov}

\begin{defn}[Novikov--Sikorav ring]
Let $G$ be a finitely generated group, let $\varphi\in S(G)$, and define the \emph{Novikov--Sikorav ring} of $G$ with respect to $\varphi$ to be
\[\nov{R}{G}{\varphi}\coloneqq\left\{\sum_{g\in G}n_g g \, : \,  \big\vert\{g : n_g\neq 0\text{ and }\varphi(g)<t\}\big\vert <\infty\text{ for all }t\in\RR\right\}.\]
\end{defn}

\begin{defn}[Truncation]
	Given an element $x=\sum_{g\in G}n_g g \in \nov{R}{G}{\varphi}$ and a real number $r$, we define the \emph{truncation} of $x$ at $r$ to be the element $\sum_{g\in G}n'_g g$ where
	\[
	n'_g = \left\{ \begin{array}{ccl}
	n_g & \textrm{ if } & \varphi(g) \leqslant r \\
	0 & \textrm{ if } & \varphi(g) > r.
	\end{array} \right.
	\]
	Note that the truncation lies in $RG$.
\end{defn}

\begin{defn}[Novikov--Sikorav homology]
Let $X$ be a CW complex with $\pi_1 X=G$ and let $\varphi\in S(G)$.
The \emph{Novikov--Sikorav homology}
\[H_\bullet^G(\widetilde{X};\nov{R}{G}{\varphi})\]
 of $X$ is the $G$-equivariant homology of $\widetilde{X}$, the universal cover of $X$, with non-trivial coefficients $\nov{R}{G}{\varphi}$, that is, the homology of the chain complex  $C_\bullet(\widetilde{X};R) \otimes_{RG} \nov{R}{G}{\varphi}$.
\end{defn}

\begin{remark}
	\label{BNS finite index}
Straight from the definition, it is easy to see that if $H$ is a finite index subgroup of $G$ and $H_i^G(\widetilde X;\nov{R}{G}{\varphi})=0$, then
$H_i^H(\widetilde X;\nov{R}{H}{\varphi\vert_H})=0$ as well, since $\nov{R}{G}{\varphi} = R G \otimes_{R H} \nov{R}{H}{\varphi\vert_H}$.
\end{remark}

The following result has an essentially identical proof as \cite[Theorem~5.3]{Fisher2021} (see also \cite[Proposition~5]{FarberGeogheganSchuetz2010}, \cite[Appendix]{Bieri2007}, \cite[Theorem~3.11]{Kielak2020RFRS}, and \cite{Sikorav1987}).

\begin{thm}[Sikorav's Theorem for spaces]
	\label{Sikorav}
Let $X$ be a connected CW complex with finite $n$-skeleton and $\pi_1X=G$.  Let $\varphi\in S(G)$ and let $k\leq n$.  The following are equivalent:
\begin{enumerate}
    \item $\varphi\in\Sigma^k(X;R)$;
    \item $H_i^G(\widetilde X;\nov{R}{G}{\varphi})=0$ for $i\leq k$.
\end{enumerate}
\end{thm}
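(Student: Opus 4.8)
The plan is to follow the standard route to Sikorav's theorem (Sikorav \cite{Sikorav1987}; Bieri \cite{Bieri2007}; Farber--Geoghegan--Sch\"utz \cite{FarberGeogheganSchuetz2010}; Fisher \cite[Theorem~5.3]{Fisher2021}), checking only that nothing in it uses $R=\ZZ$. Write $C_\bullet\coloneqq C_\bullet(\widetilde X;R)$, a chain complex of free $RG$-modules finitely generated in degrees $\leq n$; condition (1) unwinds to ``$C_\bullet$ is of finite $k$-type over $RG_\varphi$'', and condition (2) is ``$H_i\big(C_\bullet\otimes_{RG}\nov{R}{G}{\varphi}\big)=0$ for $i\leq k$''. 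The first step is to record the structure of the Novikov ring relative to $G_\varphi$: it contains $RG_\varphi$ as a subring and is the inverse limit, over heights $r$ in the totally ordered group $\varphi(G)\leq\RR$, of the truncation quotients of $\nov{R}{G}{\varphi}$ obtained by discarding the tail above height $r$; correspondingly $C_\bullet\otimes_{RG}\nov{R}{G}{\varphi}$ is an inverse limit of complexes each of which is, in degrees $\leq n$, a finitely generated free $RG_\varphi$-complex. Indexing by the order on $\varphi(G)$ means $\varphi$ need not be discrete; the definition of $\nov{R}{G}{\varphi}$ is exactly tailored so that the relevant truncations are finitely supported.

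For $(1)\Rightarrow(2)$: given a finitely generated projective $RG_\varphi$-complex $P_\bullet$ and a $k$-equivalence $f\colon\res_{RG_\varphi}C_\bullet\to P_\bullet$, one checks that the truncation tower computing $\nov{R}{G}{\varphi}$ is Mittag--Leffler (surjective transition maps), so that $f\otimes_{RG_\varphi}\nov{R}{G}{\varphi}$ remains a $k$-equivalence; it then suffices to see $H_i\big(P_\bullet\otimes_{RG_\varphi}\nov{R}{G}{\varphi}\big)=0$ for $i\leq k$, which holds because any $\varphi$-positive group element becomes invertible in $\nov{R}{G}{\varphi}$ via the geometric series $1+g+g^2+\cdots$, forcing $\Tor^{RG_\varphi}_\ast(R,\nov{R}{G}{\varphi})=0$. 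A variant closer in spirit to the present paper: use a characterisation of $\varphi\in\Sigma^k(X;R)$ by partial chain-homotopy data over $RG$ controlled by $\varphi$-height, and then \emph{sum} it into a partial chain contraction over $\nov{R}{G}{\varphi}$ --- the same geometric-series manoeuvre that, with $\calu G$ replacing $\nov{R}{G}{\varphi}$, drives the main result.

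For $(2)\Rightarrow(1)$: feed the vanishing of the Novikov homology through the inverse-limit description and the associated $\varprojlim$--$\varprojlim^1$ sequence. The $\varprojlim^1$ term vanishes because each truncated complex is finitely generated free over $RG_\varphi$ in degrees $\leq n\geq k$, so the towers of homology groups are Mittag--Leffler; hence the tower of homologies of the truncated free $RG_\varphi$-subcomplexes of $C_\bullet$ is pro-trivial through degree $k$. Brown's criterion, in the version for chain complexes over the monoid $G_\varphi$, then gives that $C_\bullet$ is of finite $k$-type over $RG_\varphi$, i.e.\ $\varphi\in\Sigma^k(X;R)$.

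The genuine work is the $\varprojlim^1$/Mittag--Leffler bookkeeping in the reverse direction: one must pin down which finitely generated free $RG_\varphi$-subcomplexes of $C_\bullet$ to use (the boundary of a cell spreads $\varphi$-height by a bounded amount, so the naive ``height $\leq r$'' pieces form a subcomplex only after a fixed shift) and then verify the surjectivity/stabilisation statements that kill the derived limit --- this is exactly where finiteness of the $n$-skeleton is used. The real (possibly dense) character adds only notational overhead once one works with the order on $\varphi(G)$, and replacing $\ZZ$ by a general ring $R$ changes nothing, since every step concerns finite generation of $R$-modules and exactness of complexes of them.
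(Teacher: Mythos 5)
The paper does not actually prove this statement: it records it with the remark that the proof is ``essentially identical'' to \cite[Theorem~5.3]{Fisher2021}, pointing also to \cite[Proposition~5]{FarberGeogheganSchuetz2010}, \cite[Appendix]{Bieri2007} and \cite{Sikorav1987}. Your outline draws on exactly those sources, so in spirit you are doing what the paper intends. However, the main line of your $(1)\Rightarrow(2)$ argument has a genuine gap. Having reduced (via the mapping-cone of the $k$-equivalence $f$) to showing $H_i\big(P_\bullet\otimes_{RG_\varphi}\nov{R}{G}{\varphi}\big)=0$ for $i\leq k$, you justify this by ``$\Tor^{RG_\varphi}_\ast(R,\nov{R}{G}{\varphi})=0$''. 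That vanishing (which does follow from the geometric-series inversion of $\varphi$-positive elements) is only relevant when $P_\bullet$ is a partial projective resolution of the trivial module $R$ over $RG_\varphi$ --- the situation in the \emph{group} version of Sikorav's theorem, but not in the \emph{space} version being proved here: for non-aspherical $X$ one has $H_i(P_\bullet)\cong H_i(\widetilde X;R)$ for $0<i<k$, and these are essentially arbitrary $RG$-modules. Vanishing of $\Tor^{RG_\varphi}_\ast(R,-)$ says nothing about $\Tor^{RG_\varphi}_\ast(M,-)$ for other modules $M$; for instance $RG\otimes_{RG_\varphi}\nov{R}{G}{\varphi}\cong\nov{R}{G}{\varphi}\neq 0$, and $RG$ does occur as $H_2(\widetilde X;R)$ for $X=S^1\vee S^2$. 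So the universal-coefficients collapse you are implicitly invoking does not go through as stated.

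The fix is the argument you relegate to a parenthetical ``variant'': the characterisation (from \cite{FarberGeogheganSchuetz2010}) of $\varphi\in\Sigma^k(X;R)$ by a finitely supported chain homotopy $H$ over $RG$ in degrees $\leq k$ for which $\id-(\partial H+H\partial)$ strictly increases $\varphi$-height by some $\epsilon>0$; summing $H\sum_{j\geq 0}(\id-\partial H-H\partial)^j$ in $\nov{R}{G}{\varphi}$ then yields a genuine partial chain contraction, giving $(1)\Rightarrow(2)$. The same characterisation gives $(2)\Rightarrow(1)$ by truncating a Novikov chain contraction at sufficiently large height --- considerably shorter than the $\varprojlim^1$/Mittag--Leffler/Brown's-criterion route you sketch, which is Schweitzer's approach and also works, but whose ``genuine work'' you leave entirely to the reader. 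Your observation that nothing in these arguments uses $R=\ZZ$ is correct, and is precisely the point the paper is making by stating the theorem over a general ring.
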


\begin{thm}[Sikorav's Theorem for groups {\cite[Theorem~5.3]{Fisher2021}}]
Let $G$ be a group of type $\mathsf{FP}_n(R)$.  Let $\varphi\in S(G)$ and let $k\leq n$.  The following are equivalent:
\begin{enumerate}
    \item $\varphi\in\Sigma^k(G;R)$;
    \item $H_i(G;\nov{R}{G}{\varphi})=0$ for $i\leq k$.
\end{enumerate}
\end{thm}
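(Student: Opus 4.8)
The plan is to reduce the statement to the already‑established space‑level version, \Cref{Sikorav}. Since $G$ is of type $\mathsf{FP}_n(R)$, fix a free resolution $F_\bullet\onto R$ of the trivial $RG$‑module with $F_i$ finitely generated for $i\leq n$. By the definition of group homology with coefficients, $H_i(G;\nov{R}{G}{\varphi})=H_i(F_\bullet\otimes_{RG}\nov{R}{G}{\varphi})$ for every $i$. On the other hand, restriction of scalars along the subring inclusion $RG_\varphi\hookrightarrow RG$ is exact, so $F_\bullet\onto R$ remains a resolution over $RG_\varphi$, now by (typically infinitely generated) free modules, and a standard characterisation of the property $\mathsf{FP}_k$ shows that $G_\varphi$ is of type $\mathsf{FP}_k(R)$ precisely when this resolution admits a $k$‑equivalence to a finitely generated projective $RG_\varphi$‑complex, i.e. precisely when $\varphi\in\Sigma^k(F_\bullet;R)$. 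Thus the theorem becomes the assertion that $\varphi\in\Sigma^k(F_\bullet;R)$ if and only if $H_i(F_\bullet\otimes_{RG}\nov{R}{G}{\varphi})=0$ for all $i\leq k$. But this is exactly \Cref{Sikorav}: the cellular chain complex of a CW complex with finite $n$‑skeleton is precisely a chain complex of finitely generated free $RG$‑modules concentrated in degrees $\leq n$, and the argument proving \Cref{Sikorav} is entirely chain‑level — it never uses that the complex is acyclic in positive degrees or arises from a space — so it applies to $C_\bullet=F_\bullet$ verbatim.

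Should one prefer to prove the equivalence directly (the argument is the same as for \Cref{Sikorav}), I would proceed as follows. Fix a finite generating set of $G$ and rescale $\varphi$, so that $\partial$ and the augmentation have bounded $\varphi$‑displacement (their matrix entries are supported in a bounded window of $\varphi$‑values). For $(2)\Rightarrow(1)$: vanishing of $H_i(F_\bullet\otimes_{RG}\nov{R}{G}{\varphi})$ for $i\leq k$ supplies a partial chain contraction in degrees $\leq k$ given by finitely many finite matrices over $\nov{R}{G}{\varphi}$; truncating these matrices at a sufficiently large $\varphi$‑level produces matrices over $RG$ for which $\partial s+s\partial=\id-E$ with $E$ supported arbitrarily far out in the $\varphi$‑direction, and one then inverts $\id-E$ after passing to $RG_\varphi$ to obtain a genuine partial contraction over $RG_\varphi$, whence $G_\varphi$ is of type $\mathsf{FP}_k(R)$. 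For $(1)\Rightarrow(2)$: since $1-g$ is invertible in $\nov{R}{G}{\varphi}$ whenever $\varphi(g)>0$ (with inverse $1+g+g^2+\cdots$), one gets $R\otimes_{RG_\varphi}\nov{R}{G}{\varphi}=0$, and feeding a projective resolution of $R$ over $RG_\varphi$ that is finitely generated in degrees $\leq k$ (available because $G_\varphi$ is $\mathsf{FP}_k(R)$) into a change‑of‑rings computation yields the required vanishing.

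I expect the only genuinely delicate step to be the inversion of $\id-E$ over $RG_\varphi$. That ring is not $\varphi$‑adically complete, so the geometric series $\sum_j E^j$ need not converge in it; the way around this is to use that $E$ is a finite matrix whose entries occupy only finitely many layers of the $\varphi$‑filtration, and to rescale — conjugating by a suitable diagonal matrix of group elements — so that the rescaled error becomes strictly contracting and the series does converge. Everything else is bookkeeping with the $\varphi$‑filtration, together with the standard properties of $\Sigma^\bullet$ and $\mathsf{FP}_\bullet$ recorded in \cref{sec BNSR}.
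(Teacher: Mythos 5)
The paper does not prove this statement at all: it is imported wholesale from \cite[Theorem~5.3]{Fisher2021}, and even the space version \Cref{Sikorav} is only asserted to have ``an essentially identical proof''. So the real comparison is between your argument and the standard proof of Sikorav's theorem. Your first paragraph is sound and is indeed the right way to relate the two versions: since type $\mathsf{FP}_n(R)$ does not supply a $K(G,1)$ with finite $n$-skeleton, one cannot simply invoke \Cref{thm props BNSR}(3) with $X=BG$, and replacing the cellular chain complex by a free resolution $F_\bullet\onto R$, together with the Brown/Bieri--Renz identification of ``$G_\varphi$ is of type $\mathsf{FP}_k(R)$'' with ``$F_\bullet$ is of finite $k$-type over $RG_\varphi$'', is exactly how the chain-level statement of \cite{Fisher2021} subsumes both versions.

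The genuine gap is in your direct argument for $(2)\Rightarrow(1)$, which is the direction carrying all the content. The matrix $\id-E$, with $E$ over $RG_\varphi$ and supported on elements of strictly positive $\varphi$-value, is in general \emph{not} invertible over $RG_\varphi$, and no rescaling or conjugation by a diagonal matrix of group elements can repair this: for $G=\ZZ=\langle t\rangle$, $\varphi(t)=1$ and $E=(t)$, the element $1-t$ is invertible in $R\llbracket t\rrbracket$ but not in $R[t]=RG_\varphi$. The obstruction is not a failure of contraction but the fact that $RG_\varphi$ contains only finitely supported elements, while $\sum_j E^j$ has infinitely many nonzero terms unless $E$ is nilpotent; completeness is exactly what \Cref{nov DivClos is RatClos} exploits when it sums the same series inside $\nov{R}{G}{\varphi}$, and it is unavailable here. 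The standard proof replaces this step by Brown's finiteness criterion: from $\partial\bar s+\bar s\partial=\id-E$ one deduces that every cycle $z$ in degree $i\leq k$ satisfies $z\equiv Ez\equiv E^2z\equiv\cdots$ modulo boundaries, with $E^mz$ supported ever further out in the $\varphi$-direction, so the directed system of homologies of the $RG_\varphi$-subcomplexes $F_\bullet^{\varphi\geq t}$ is essentially trivial in degrees $\leq k$ --- and it is this essential triviality, not a global partial contraction over $RG_\varphi$, that is equivalent to $G_\varphi$ being of type $\mathsf{FP}_k(R)$. (A milder version of the same issue affects your $(1)\Rightarrow(2)$: the vanishing of $R\otimes_{RG_\varphi}\nov{R}{G}{\varphi}$ only kills $\Tor_0$; exactness of $Q_\bullet\otimes_{RG_\varphi}\nov{R}{G}{\varphi}$ in degrees $\leq k$ is again a level-by-level completion argument that uses finite generation of the $Q_i$.)
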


\section{Passing vanishing of Novikov homology to a division-closed subring}\label{sec passing vanishing}

In this section we will show that vanishing of Novikov homology up to a given degree can be determined by computing homology with coefficients being the divison closure of $RG$ in $\nov{R}{G}{\varphi}$.  First, we show that the division and rational closures are equal.

\begin{prop}\label{nov DivClos is RatClos}
Let $R$ be a ring and let $G$ be a finitely generated
 group.  If $\varphi\in S(G)$, then $\cald(RG\subset \nov{R}{G}{\varphi})=\calr(RG\subset\nov{R}{G}{\varphi})$.
\end{prop}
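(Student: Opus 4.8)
Write $N \coloneqq \nov{R}{G}{\varphi}$. The inclusion $\cald(RG\subset N)\subseteq\calr(RG\subset N)$ is immediate: any rationally closed subring is in particular division closed (a $1\times1$ matrix is just an element), so $\calr(RG\subset N)$ is a division-closed subring of $N$ containing $RG$ and hence contains $\cald(RG\subset N)$. For the reverse inclusion it suffices to show that $\cald\coloneqq\cald(RG\subset N)$ is \emph{itself} rationally closed, since $\calr(RG\subset N)$ is by definition the smallest rationally closed subring containing $RG$ (so that $\calr(RG\subset N)\subseteq\cald$ would follow). Concretely, the goal is: every square matrix $A$ over $\cald$ invertible over $N$ is already invertible over $\cald$. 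I would prove this by induction on the size $n$ of $A$, the case $n=1$ being precisely the definition of division-closedness.

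For the inductive step I would run Gaussian elimination. The key reduction is this: if, after elementary row and column operations over $\cald$ together with permutations, some entry of the transformed matrix becomes a unit of $N$, then that entry is automatically a unit of $\cald$ by division-closedness, so clearing its row and column by further $\cald$-operations turns $A$ into $\operatorname{diag}(u,A')$ with $A'$ of size $n-1$ over $\cald$; as this differs from $A$ by matrices invertible over $\cald$ it is still invertible over $N$, hence so is $A'$, and the inductive hypothesis applies. So everything reduces to producing such a unit entry, which I expect to be the crux. The main tool is the $\varphi$-grading on $N$: writing $v(x)=\min\{\varphi(g):g\in\operatorname{supp}(x)\}$ for $x\ne0$ (well defined by the Novikov condition, and satisfying $v(xy)\ge v(x)+v(y)$ and $v(x+y)\ge\min\{v(x),v(y)\}$), the set $N_{\ge0}\coloneqq\{x:v(x)\ge0\}\cup\{0\}$ is a subring, $N_{>0}\coloneqq\{x:v(x)>0\}\cup\{0\}$ is a two-sided ideal of it, and the quotient map is the ring homomorphism $\pi\colon N_{\ge0}\to R[\ker\varphi]$ recording the $\varphi$-degree-$0$ part. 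By the Novikov condition the positive $\varphi$-values appearing in a fixed $m\in N_{>0}$ are bounded away from $0$, so $\sum_{k\ge0}m^k$ converges in $N_{\ge0}$ and $1-m$ is a unit of $N$; consequently any $x\in N_{\ge0}$ with $v(x)=0$ whose image $\pi(x)$ is a unit of $R[\ker\varphi]$ is a unit of $N$.

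To produce the unit entry I would rescale rows and columns of $A$ by elements of $G\subseteq\cald$ so that every entry lies in $N_{\ge0}$ and the first column has an entry of $\varphi$-degree $0$, and then run a Euclidean-type procedure on the first column: repeatedly subtract left $R[\ker\varphi]$-multiples (and $G$-translates) of one row from another to cancel leading $\varphi$-degree terms, passing to strictly higher $\varphi$-degree at each non-terminal step, until some first-column entry has leading coefficient a unit of $R[\ker\varphi]$ — whereupon, after one final rescaling and a permutation, it is the desired unit pivot. \textbf{The hard part is showing that this procedure terminates:} one must use the invertibility of $A$ over $N$ in an essential way (to rule out the algorithm running forever on a \emph{degenerate} first column) together with the finite generation of $G$. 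A subtlety to keep in mind is that unit-ness in $N$ is \emph{not} read off the leading coefficient — for $G=\ZZ/2\times\ZZ$ over $\FF_2$, with $s$ generating $\ZZ/2$ and $\varphi$ the projection to $\ZZ$, the element $1+s+t$ is a unit of $N$ even though its leading coefficient $1+s$ is a zero-divisor — so the termination estimate has to be genuinely quantitative rather than a soft \emph{generic leading term} argument. (One could alternatively try to reduce first to $\varphi$ with cyclic image by a lexicographic multi-degree argument, but as $R[\ker\varphi]$ need be neither a domain nor a local ring this does not by itself remove the difficulty.)
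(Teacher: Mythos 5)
Your reduction to showing that $\cald\coloneqq\cald(RG\subset N)$ is rationally closed, and your valuation machinery (the subring $N_{\ge0}$, the ideal $N_{>0}$, and the fact that $1-m$ is a unit of $N$ for $m\in N_{>0}$ via a convergent geometric series), are all correct and are exactly the ingredients the paper uses. But your proposal has a genuine gap at precisely the point you flag as the crux: you never produce the unit pivot. The Euclidean-type procedure on the first column has no termination proof, and you acknowledge that the invertibility of $A$ over $N$ must enter "in an essential way" without saying how. As your own example $1+s+t$ shows, the leading coefficient of a unit of $N$ need not be a unit of $R[\ker\varphi]$, so there is no reason the column reduction ever reaches an entry with invertible leading term; this is not a routine detail but the entire content of the proposition, and the proposal does not close it.

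The idea you are missing is to use the inverse of $A$ over $N$ directly rather than trying to manufacture a pivot inside $A$. Let $B\in\mathbf{M}_n(N)$ satisfy $AB=I$. Since $A$ has finitely many entries, all of them are supported on $\varphi^{-1}\bigl((-r,\infty)\bigr)$ for some $r$; truncating each entry of $B$ at $r$ gives $\bar B\in\mathbf{M}_n(RG)$ with $A\bar B=I-P$, where every entry of $P$ has strictly positive support and $P\in\mathbf{M}_n(\cald)$ (because $A$ is over $\cald$ and $\bar B$ over $RG\subseteq\cald$). Now every diagonal entry of $I-P$ is of the form $1-u$ with $u$ strictly positive, hence a unit of $N$ by your geometric-series observation, hence a unit of $\cald$ by division closedness: the unit pivots come for free, and ordinary Gaussian elimination over $\cald$ (scale the pivot, clear its row and column, recurse on the remaining diagonal entries, which stay strictly positive off the constant term) inverts $I-P$ over $\cald$ in finitely many steps with no termination issue. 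Then $A\cdot\bar B(I-P)^{-1}=I$ with $\bar B(I-P)^{-1}\in\mathbf{M}_n(\cald)$, and comparing with $AB=I$ over $N$ shows this right inverse equals $B$, so $A$ is invertible over $\cald$. In short: your framework is the right one, but the truncation-of-the-inverse trick is the missing step that converts the intractable "find a unit entry in $A$" problem into the trivial "every diagonal entry of $I-P$ is a unit" observation.
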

\begin{proof}
Let $\cald=\cald(RG\subset \nov{R}{G}{\varphi})$ and $\calr=\calr(RG\subset\nov{R}{G}{\varphi})$; clearly, $\cald \subseteq \calr$.  Consider a square matrix $A\in \mathbf{M}_n(\cald)$ that is invertible over $\nov{R}{G}{\varphi}$.  Let $B\in\mathbf{M}_n(\nov{R}{G}{\varphi} )$ be such that $AB=I$ where $I$ is the identity matrix.  We need to show that $A$ is invertible over $\cald$, which will show that $\cald$ is rationally closed, and hence that $\cald = \calr$.

Since $A$ is in particular a finite matrix over $\nov{R}{G}{\varphi}$, there exists $r \in \RR$ such that all entries of $A$ are supported on $\phi^{-1}\big( (-r, \infty) \big)$.
We truncate the entries of $B$ at $r$, and obtain a matrix $\bar B\in \mathbf{M}_n(RG)$.  Now, $A\bar B=I-P$ where the elements of the supports of the entries of $P$ are all positive with respect to $\varphi$.  Indeed, let $Q= B-\bar B$.  Then we have
\[I=AB=A(Q+\bar B)=AQ+A\bar B.\]
In particular, $A\bar B=I-AQ=I-P$.  Moreover, since $A\bar B\in\mathbf{M}_n(\cald)$ we have $P\in\mathbf{M}_n(\cald)$.

\begin{claim}\label{claim I plus P}
$I-P$ is invertible over $\cald$.
\end{claim}
\begin{claimproof}[Proof of \Cref{claim I plus P}]
The first diagonal entry of $I-P$ is of the form $1-u \in \cald$,	
 where every element in the support of $u$ has positive value under $\varphi$.  In particular, $1-u$ is invertible in $\nov{R}{G}{\varphi}$, and hence in $\cald$.  It follows that we may multiply $I-P$ with a diagonal matrix $D$ over $\cald$, that is invertible over $\cald$, to obtain $I-P'$ such that the first diagonal entry of $P'$ is $0$ and the remaining entries are either zero or have strictly positive support with respect to $\varphi$, in the same sense as $P$ did.

The entries of $I-P'$ all lie in $\cald$. We may now use elementary matrices over $\cald$ to clear the non-diagonal entries in the first row and column of $I - P'$. Repeating the process finitely many times for each diagonal entry of $I-P$ constructs a matrix $(I-P)^{-1} \in\mathbf{M}_n(\cald)$.
\end{claimproof}
To finish the proof we observe that $A\bar B(I-P)^{-1}=I$ and that $\bar B(I-P)^{-1}\in\mathbf{M}_n(\cald)$ since $\bar B\in\mathbf{M}_n(RG)$ and $(I-P)^{-1}\in\mathbf{M}_n(\cald)$ by \Cref{claim I plus  P}.
\end{proof}

\begin{prop}\label{inductive chain contraction}
Let $R$ be a ring, let $G$ be a finitely generated group, and let $\varphi\in S(G)$. Let
\[C_{n+1}\overset{\partial_{n+1}}{\to} C_n\overset{\partial_n}{\to} C_{n-1}\]
 be a chain complex of based free $\nov{R}{G}{\varphi}$-modules with $C_n$ and $C_{n-1}$ finitely generated and $\partial_i$ defined over $\cald\coloneqq \cald(RG\subset \nov{R}{G}{\varphi})$.  Suppose that we are given $\nov{R}{G}{\varphi}$-module homomorphisms $H_{n-1}\colon C_{n-1}\to C_n$ defined over $\cald$ and $H_n\colon C_n\to C_{n+1}$ defined over $\nov{R}{G}{\varphi}$ satisfying $H_{n-1}\partial_n+\partial_{n+1}H_n=\id_{C_n}$.  Then, we may take $H_{n}$ to be defined over $\cald$.
\end{prop}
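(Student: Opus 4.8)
The plan is to reduce the statement to one about finite matrices over $\cald$ and $\Lambda\coloneqq\nov{R}{G}{\varphi}$, and then to repair the given $\Lambda$-homotopy $H_n$ by right-multiplying a truncation of it by a matrix that is visibly invertible over $\Lambda$ but which, crucially, also lies over $\cald$ by rational closedness of $\cald$ (\Cref{nov DivClos is RatClos}). First I would reduce to the case in which $C_{n+1}$ is finitely generated: since $C_n$ is finitely generated, only finitely many basis elements of $C_{n+1}$ occur in the images $H_n(e)$ of the basis elements $e$ of $C_n$, and restricting $C_{n+1}$ to the based free summand they span, and $\partial_{n+1}$ to that summand, changes neither the hypotheses nor---after composing the contraction we produce with the inclusion back into the original $C_{n+1}$---the conclusion. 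After this reduction all maps in play are finite matrices with entries in $\Lambda$, hence with $\varphi$-support bounded below.

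Next I would set $\Phi\coloneqq\id_{C_n}-H_{n-1}\partial_n$, a matrix over $\cald$; the hypothesis reads $\partial_{n+1}H_n=\Phi$ over $\Lambda$. Composing the contraction identity with $\partial_n$ on the left and using $\partial_n\partial_{n+1}=0$ gives $\partial_n H_{n-1}\partial_n=\partial_n$; this is an equality of matrices over $\cald$ that holds over $\Lambda$, hence over $\cald$, and it yields $\Phi^2=\Phi$ over $\cald$. Likewise $\Phi\partial_{n+1}=\partial_{n+1}-H_{n-1}\partial_n\partial_{n+1}=\partial_{n+1}$ over $\cald$. These two identities are the only consequences of the contraction that I will use.

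Now I would truncate each entry of $H_n$ at a real number $L$, obtaining a matrix $\overline{H}$ over $RG\subseteq\cald$, so that $\partial_{n+1}\overline{H}=\Phi-P$ with $P\coloneqq\Phi-\partial_{n+1}\overline{H}$ a matrix over $\cald$. Since also $P=\partial_{n+1}(H_n-\overline{H})$, and the entries of $H_n-\overline{H}$ are supported where $\varphi>L$ while those of $\partial_{n+1}$ have $\varphi$-support bounded below, for $L$ large enough $P$ has all of its support positive with respect to $\varphi$. Exactly as in the proof of \Cref{nov DivClos is RatClos}, $\id-P$ is then invertible over $\Lambda$, with inverse the geometric series $\sum_{j\geq0}P^{j}$ (convergent since the $\varphi$-supports of the $P^{j}$ tend to $+\infty$ and $\Lambda$ is complete); being a square matrix over $\cald$ invertible over $\Lambda$, it is invertible over $\cald$ because $\cald$ is rationally closed in $\Lambda$ by \Cref{nov DivClos is RatClos}. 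Finally I would set $H_n'\coloneqq\overline{H}\,(\id-P)^{-1}$, a matrix over $\cald$, and check that $\partial_{n+1}H_n'=(\Phi-P)(\id-P)^{-1}=\Phi$: this last equality is equivalent to $\Phi P=P$, and $\Phi P=\Phi\bigl(\Phi-\partial_{n+1}\overline{H}\bigr)=\Phi^{2}-(\Phi\partial_{n+1})\overline{H}=\Phi-\partial_{n+1}\overline{H}=P$ by the two identities above. Hence $H_{n-1}\partial_n+\partial_{n+1}H_n'=\id_{C_n}$, so $H_n'$ is the required chain homotopy defined over $\cald$.

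\textbf{Main obstacle.} The naive approach---iteratively truncate, correct the resulting error term by a further element of $RG$, and repeat---produces an infinite sum of elements of $RG$, which converges in $\Lambda$ but need not lie in $\cald$, since $\cald$ is merely a subring and need not be closed under such sums. The point is to collect all the higher-order corrections at once into the single matrix $(\id-P)^{-1}$ and then to invoke rational closedness of $\cald$, just as in \Cref{nov DivClos is RatClos}; the remaining verification that $\overline{H}\,(\id-P)^{-1}$ works then reduces, via the idempotence of $\Phi$ and the identity $\Phi\partial_{n+1}=\partial_{n+1}$, to the single line $\Phi P=P$.
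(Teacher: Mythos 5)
Your proposal is correct and follows essentially the same route as the paper: truncate $H_n$ to $\overline{H}$ over $RG$, form the error term $P=\partial_{n+1}(H_n-\overline{H})$ with strictly positive $\varphi$-support, invert $\id-P$ over $\cald$ via \Cref{nov DivClos is RatClos}, and take the new homotopy to be $\overline{H}(\id-P)^{-1}$. The only difference is cosmetic: where the paper verifies the final identity using the decomposition $C_n=\ker\partial_n\oplus\im H_{n-1}\partial_n$ and the fact that $P(C_n)\leq\ker\partial_n$, you package the same information into the algebraic identities $\Phi^2=\Phi$ and $\Phi\partial_{n+1}=\partial_{n+1}$ and reduce everything to the one-line computation $\Phi P=P$.
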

\begin{proof}
We have the following diagram (which does not in general commute since the vertical map is equal to the sum of the two diagonal composites with codomain $C_n$):
\[\begin{tikzcd}
C_{n+1} \arrow[r, "\partial_{n+1}"] & C_n \arrow[r, "\partial_n"] \arrow[ld, "H_n"'] \arrow[d, "id_{C_n}"] & C_{n-1} \arrow[ld, "H_{n-1}"] \\
C_{n+1} \arrow[r, "\partial_{n+1}"] & C_n \arrow[r, "\partial_n"]                                           & C_{n-1}.
\end{tikzcd} \]

We have
\[
\partial_n = \partial_n (\partial_{n+1} H_n +  H_{n-1} \partial_n) = \partial_{n} H_{n-1} \partial_n
\]
and similarly $\partial_{n+1} = \partial_{n+1} H_n \partial_{n+1}$.
Using these two equalities we obtain
\begin{align*}
H_{n-1}\partial_n+\partial_{n+1}H_n &= \id_{C_n} \\
									&= \id_{C_n}\circ \id_{C_n}  \\
									&= (H_{n-1}\partial_n+\partial_{n+1}H_n)^2 \\
									&= H_{n-1}\partial_n+\partial_{n+1}H_n + \partial_{n+1} H_n H_{n-1} \partial_n
\end{align*}
and so $\partial_{n+1} H_n H_{n-1} \partial_n = 0$.

Since the modules are based, $H_n$ is naturally a matrix over $\nov{R}{G}{\varphi}$ with potentially infinitely many rows, finitely many columns, and finitely many nonzero entries. We will truncate the entries to obtain a matrix $\bar H_n$ over $RG$, and then construct a matrix $A$ such that  $\partial_{n+1} \bar H_{n}A+H_{n-1}\partial_{n}=\id_{C_n}$ and $\bar H_{n}A$ is defined over
\[\calr\coloneqq\calr(RG\subset \nov{R}{G}{\varphi}).\]
  From here we may apply \Cref{nov DivClos is RatClos}.  Of course nothing in life is that straight-forward so we spell out the details.

\begin{claim}\label{inductive chain contraction claim 1}
We have $C_n=\ker\partial_n\oplus\im H_{n-1}\partial_{n}$.
\end{claim}
\begin{claimproof}[Proof of \Cref{inductive chain contraction claim 1}]
 Consider $w\in\ker\partial_n \cap \im H_{n-1}\partial_{n}$. We have
\[
w = (H_{n-1}\partial_n+\partial_{n+1}H_n)w=\partial_{n+1}H_n w =0,
\]
where the second equality follows from $\partial_n w = 0$, and the third from
\[\partial_{n+1} H_n H_{n-1} \partial_n=0\]
 and $w \in \im H_{n-1} \partial_n$.
This shows that the sum in the claim is direct.

 Note that by the equation $\id_{C_n}=\partial_{n+1}H_n +H_{n-1}\partial_n $, it suffices to argue that $\im\partial_{n+1}H_n=\ker\partial_n$.  We have $\im\partial_{n+1}H_n\leq \ker\partial_n$.  For the other inclusion let $v\in\ker\partial_n$.  Then,
\[v=\partial_{n+1}H_n v +H_{n-1}\partial_n v = \partial_{n+1}H_n v, \]
which completes the proof of the claim.
\end{claimproof}

Note that $H_n$, when viewed as a matrix, has finitely many columns, and only finitely many non-zero entries (even though the number of rows might be infinite).
We truncate the entries of $H_n$ at a very large real number $r$ with respect to $\varphi$ to obtain a matrix $\bar H_n$ over $RG$.  We set
\[P = \partial_{n+1}(H_n-\bar H_n).\]
By choosing $r$ to be sufficiently large,  we arrange for the minimum of the support of every entry of $P$ with respect to $\varphi$ to be strictly greater than $0$.

\begin{claim}\label{inductive chain contraction claim 2}
	We have
$P(C_n)\leq \ker\partial_n.$
\end{claim}
\begin{claimproof}[Proof of \Cref{inductive chain contraction claim 2}]
First, suppose that $v\in\ker\partial_n$.  Then, \[v=H_{n-1}\partial_n v+\partial_{n+1}H_n v=\partial_{n+1} H_n v =Pv+\partial_{n+1} \bar H_n v.\]
So, $Pv=v-\partial_{n+1}\bar H_n v$ where $\partial_{n+1}\bar H_n v\in \im\partial_{n+1}\leqslant \ker\partial_n$.  In particular, $Pv\in\ker\partial_n$.

Now, suppose that $v\in \im H_{n-1}\partial_{n}$.  Then,
\[Pv=\partial_{n+1}H_nv-\partial_{n+1}\bar H_n v=-\partial_{n+1}\bar H_n v \]
which is contained in $\im\partial_{n+1} \leqslant \ker\partial_n$.  We are now done by applying the direct sum decomposition of \Cref{inductive chain contraction claim 1}.
\end{claimproof}

Write $I\coloneqq \id_{C_n}$.  The map $I-P\colon C_n\to C_n$ is invertible over $\nov{R}{G}{\varphi}$ with inverse $(I-P)^{-1}=I+\sum_{i=1}^\infty P^i$.  Moreover, we have
\begin{equation} \label{eqn inductive chain contraction}
H_{n-1}\partial_n(I-P)^{-1}=H_{n-1}\partial_n,
\end{equation}
because $H_{n-1}\partial_n P^i=0$ for $i\geq 1$ by \Cref{inductive chain contraction claim 2}.  We have
\[I-P= \partial_{n+1}\bar H_n+H_{n-1}\partial_n, \]
the right hand side of which is defined over $\cald$.  We  apply \Cref{nov DivClos is RatClos} to see that $I-P$ is invertible over $\cald$.  Finally,
\[I=(\partial_{n+1}\bar H_n+ H_{n-1}\partial_n)(I-P)^{-1}=\partial_{n+1}\bar H_n(I-P)^{-1}+ H_{n-1}\partial_n, \]
where the second equality uses \eqref{eqn inductive chain contraction}. Thus, we may take our new $H_n$ to be $\bar H_n(I-P)^{-1}$ which is defined over $\cald$.
\end{proof}

We are now ready to prove the main result of this section.

\begin{prop}\label{chain contractions}
Let $G$ be a finitely generated group and let $\varphi\in S(G)$.  Let $C_\bullet$ be a chain complex of free $RG$-modules finitely generated up to dimension $n$, with $C_k=0$ for $k<0$.  Then,
\[H_j(C_\bullet \otimes_{RG} \nov{R}{G}{\varphi})=0 \text{ for } j\leq n\]
if and only if
\[H_j\big(C_\bullet \otimes_{RG} \cald(RG\subset\nov{R}{G}{\varphi})\big)=0 \text{ for } j\leq n.\]
\end{prop}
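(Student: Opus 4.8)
The plan is to reduce the statement, degree by degree, to the situation handled by \Cref{inductive chain contraction}. Set $\cald \coloneqq \cald(RG \subset \nov{R}{G}{\varphi})$. Since $\nov{R}{G}{\varphi}$ is flat over $\cald$ (it contains $\cald$, and being a twisted power-series-type completion it is flat — alternatively one only needs that tensoring is exact in the relevant range, which follows because the partial contractions we build survive base change), the implication from vanishing over $\cald$ to vanishing over $\nov{R}{G}{\varphi}$ is immediate: $H_j(C_\bullet \otimes_{RG} \nov{R}{G}{\varphi}) \cong H_j(C_\bullet \otimes_{RG} \cald) \otimes_{\cald} \nov{R}{G}{\varphi}$. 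Actually, to avoid invoking flatness one argues directly: a chain contraction of $C_\bullet \otimes_{RG} \cald$ in degrees $\le n$ base-changes to one of $C_\bullet \otimes_{RG} \nov{R}{G}{\varphi}$, so acyclicity over $\cald$ forces acyclicity over $\nov{R}{G}{\varphi}$.

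The substantive direction is the forward one. Assume $H_j(C_\bullet \otimes_{RG} \nov{R}{G}{\varphi}) = 0$ for all $j \le n$. Write $D_\bullet \coloneqq C_\bullet \otimes_{RG} \nov{R}{G}{\varphi}$; this is a complex of based free $\nov{R}{G}{\varphi}$-modules, finitely generated in degrees $\le n$, with differentials $\partial_i$ defined over $\cald$ (since they come from $RG \subseteq \cald$), and it is acyclic in degrees $\le n$ together with $D_k = 0$ for $k < 0$. Because $D_j = 0$ for $j<0$ and $H_j(D_\bullet) = 0$ for $j \le n$, one obtains the standard partial chain contraction: there exist $\nov{R}{G}{\varphi}$-module maps $H_j \colon D_j \to D_{j+1}$ for $0 \le j \le n$ with $\partial_{j+1} H_j + H_{j-1} \partial_j = \id_{D_j}$ for $j \le n$ (with $H_{-1} = 0$; here for $j = n$ one uses that $H_n(D_\bullet)=0$ to lift $\id - H_{n-1}\partial_n$ through $\partial_{n+1}$, which is possible after base change to the free module $D_{n+1}$). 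The goal is to replace each $H_j$ by a map defined over $\cald$, proceeding by induction on $j$ from $j = 0$ upward.

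For the inductive step, suppose $H_0, \dots, H_{j-2}$ are already defined over $\cald$ and $H_{j-1}$ is defined over $\cald$ as well (base case $j=0$: $H_{-1} = 0$ is trivially over $\cald$). Then the three consecutive terms $C_{j+1} \xrightarrow{\partial_{j+1}} C_j \xrightarrow{\partial_j} C_{j-1}$ (after $\otimes_{RG}\nov{R}{G}{\varphi}$) with $C_j, C_{j-1}$ finitely generated, the differentials over $\cald$, $H_{j-1}$ over $\cald$, and $H_j$ over $\nov{R}{G}{\varphi}$, satisfying $H_{j-1}\partial_j + \partial_{j+1} H_j = \id_{C_j}$, are exactly the hypotheses of \Cref{inductive chain contraction}. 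That proposition then supplies a new $H_j$ defined over $\cald$ (and leaves $H_{j-1}$ untouched, so the maps below degree $j$ are unaffected). Running the induction up to $j = n$ produces a full partial chain contraction of $C_\bullet \otimes_{RG} \cald$ in degrees $\le n$, which forces $H_j(C_\bullet \otimes_{RG} \cald) = 0$ for $j \le n$.

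The main obstacle is bookkeeping at the top degree $n$: one must be careful that \Cref{inductive chain contraction} is applied with the correct finiteness hypotheses (it needs $C_n$ and $C_{n-1}$ finitely generated, which is given, but the new $H_n$ lands in $C_{n+1}$, about which we assume nothing) and that the contraction identity used there, $H_{n-1}\partial_n + \partial_{n+1} H_n = \id_{C_n}$, is genuinely available — this is where acyclicity in degree $n$, not just degree $n-1$, is used. A secondary point to check carefully is that passing to the new $H_j$ does not disturb the already-corrected lower maps; since \Cref{inductive chain contraction} only modifies $H_n$ (in its notation) and reuses the given $H_{n-1}$, the induction is clean. Everything else is formal.
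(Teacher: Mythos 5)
Your proposal is correct and follows essentially the same route as the paper: extract a partial chain contraction over $\nov{R}{G}{\varphi}$ from acyclicity, inductively replace each $H_j$ by a map over $\cald(RG\subset\nov{R}{G}{\varphi})$ via \Cref{inductive chain contraction} starting from $H_{-1}=0$, and handle the converse by base-changing a contraction along $\cald\subseteq\nov{R}{G}{\varphi}$ (the unsubstantiated flatness aside is unnecessary, as you yourself note). Your care about the finiteness hypotheses at the top degree and about not disturbing the lower homotopies matches the structure of the paper's argument.
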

\begin{proof}
First, suppose that $H_j(C_\bullet \otimes_{RG} \nov{R}{G}{\varphi})=0$ for $j\leq n$.  Let
\[H_i\colon C_i \otimes_{RG} \nov{R}{G}{\varphi} \to C_{i+1} \otimes_{RG} \nov{R}{G}{\varphi}\]
 be a chain homotopy over $\nov{R}{G}{\varphi}$ between $0$ and $\id_{C_\bullet}$ for $i\leq n$.

Let $\cald\coloneqq \cald(RG\subset\nov{R}{G}{\varphi})$.  We now inductively apply \Cref{inductive chain contraction} to $H$ and parts of the resolution $C_\bullet$,  with the base case clearly given by the chain complex $C_0\to0\to0$, to obtain a chain homotopy between $0$ and $\id_{C_\bullet}$ defined over $\cald$ for $i\leq n$.

Consider the chain complex $C_\bullet \otimes_{RG}  \cald$.  We have that \[ C_\bullet \otimes_{RG}  \cald \subset  C_\bullet \otimes_{RG}  \nov R G \varphi \]
and the larger chain complex admits a chain homotopy between $0$ and $\id_{C_n}$ in degrees less than or equal to $n$ defined over $\cald$.  In particular, $H_j( C_\bullet \otimes_{RG}  \cald)=0$ for $j\leq n$.

The converse follows easily by arguing with chain contractions.
\end{proof}

\section{Approximate Ore condition}\label{sec ring hom}

Throughout this section, $G$ denotes a group endowed with an epimorphism $\phi \colon G \to \Z$, with $K = \ker \phi$ and $t \in G$ such that $\phi(t)$ generates $\im \phi$. Various polynomials and power series with $t$ are always twisted, and the action of $t$ is always the conjugation action inside $\Z G,$ $\caln G$, and $\calu G$, depending on context.

\begin{defn}
Given a non-zero $x = \sum_{i=k}^\infty t^i x_i \in \calu K[t^{\pm 1} \rrbracket$ with $x_i \in \calu K$ for all $i$, and with $x_k \neq 0$, we define its \emph{initial term} $\init x$ to be $t^k x_k$. Note that $k$ might be negative. We will refer to $x_k$ as the \emph{pure part} of $\init x$, and to $k$ as the \emph{associated power}.
 We also set $\init 0 = 0$, with pure part $0$ and associated power $0$.
	
The \emph{nullity} $\nul x$  is defined to be the ${\caln K}$-dimension of the kernel of the pure part of $\init x$. 	
\end{defn}

The kernel of an affiliated operator in $\calu K$ is always a closed subspace of $\ell^2 K$, since the operator is closed, and hence it makes sense to talk about the $\caln K$-dimension of the kernel.

The definitions in particular apply to Laurent polynomials in $\calu K [t^{\pm 1}]$ and in $\caln K [t^{\pm 1}]$.

\begin{defn}	
	We say that a sequence $(q_n)_n$ in $\calu K[t^{\pm 1}\rrbracket$ is \emph{admissible} if the sequence of powers associated to $\init q_n$ is bounded from below.
	
	The sequence is
	 \emph{asymptotically injective} if
	it is admissible and
	\[\sum_n \nul q_n < \infty.\]
\end{defn}	

\begin{remark}
It is not hard to see that a term-wise product of asymptotically injective sequences is also asymptotically injective. 	
\end{remark}

\begin{lemma}
	\label{Linnell cor}
	For every $x \in \calu K [t^{\pm 1}]$ we have
	\[
	\dim_{\caln G} \ker x \leqslant \nul x.
	\]
\end{lemma}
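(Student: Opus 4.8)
The plan is to derive this from Linnell's zero-divisor theorem (\Cref{Linnell zero divs}): that statement is exactly what governs when a twisted Laurent polynomial in $t$ with coefficients in $\caln K$ can annihilate a non-zero vector of $\ell^2 G$, the price being that its initial coefficient must be injective. So the argument has three stages — reduce to coefficients in $\caln K$; dispose of the injective-initial-coefficient case by a direct appeal to Linnell; and in general build an auxiliary polynomial with injective initial coefficient that agrees with $x$ on a large enough subspace.

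\emph{Reduction.} Given $x=\sum_{i=k}^{m}t^i x_i\in\calu K[t^{\pm 1}]$ (and assuming $x\neq 0$, the case $x=0$ being trivial), I would use that $\calu K=(\caln K)S^{-1}$ is an Ore localisation (\Cref{Ore for von Neumann}) to pick a common right denominator $s\in S$ for the finitely many coefficients $x_i$, so that $xs=\sum_{i=k}^{m}t^i(x_i s)\in\caln K[t^{\pm 1}]$. Since $s$ is regular in $\caln K$ it is invertible in $\calu K\subseteq\calu G$, so passing from $x$ to $xs$ leaves $\dim_{\caln G}\ker$ unchanged and, as the pure part of $\init(xs)$ is $x_k s$, leaves $\nul$ unchanged. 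Multiplying by the unit $t^{-k}$ I further reduce to $x=x_0+tx_1+\dots+t^m x_m\in\caln K[t^{\pm 1}]\subseteq\caln G$ with $x_0\neq 0$; set $d\coloneqq\nul x=\dim_{\caln K}\ker x_0$ and $p\coloneqq\pi_{\ker x_0}\in\caln K$, so $\tr_{\caln K}p=d$. If $d=0$ then $x_0$ is injective on $\ell^2 K$; taking $\{t^i\}_{i\in\ZZ}$ as a transversal for $K$ in $G$, the coefficient of $x$ at $t^0$ is $x_0$, and injectivity of $x_0$ forces $x_0 y'\neq 0$ for every non-zero $y'\in\caln K$, so \Cref{Linnell zero divs} gives $\ker x=0=\nul x$.

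\emph{General case.} Suppose $d>0$. The partial isometry in the polar decomposition of $x_0$ exhibits $\pi_{(\ker x_0)^{\perp}}\sim\pi_{\overline{\im x_0}}$; since $\caln K$ is a finite von Neumann algebra the complementary projections are also equivalent, $p=\pi_{\ker x_0}\sim\pi_{(\overline{\im x_0})^{\perp}}$, so I may fix a partial isometry $A\in\caln K$ with $A^*A=\pi_{\ker x_0}$ and $AA^*=\pi_{(\overline{\im x_0})^{\perp}}$ — that is, $A$ is an isometry on $\ker x_0$ with image $(\overline{\im x_0})^{\perp}$ and vanishes on $(\ker x_0)^{\perp}$. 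Put
\[x'\coloneqq x+A=(x_0+A)+tx_1+\dots+t^m x_m\in\caln K[t^{\pm 1}].\]
Writing a vector as $\xi_0+\xi_1$ with $\xi_0\in\ker x_0$, $\xi_1\in(\ker x_0)^{\perp}$, one has $(x_0+A)(\xi_0+\xi_1)=x_0\xi_1+A\xi_0$, a sum of vectors in the orthogonal subspaces $\overline{\im x_0}$ and $(\overline{\im x_0})^{\perp}$; hence $x_0+A$ is injective, so $\nul x'=0$ and the previous case gives $\ker x'=0$. Now let $\calw\subseteq\ell^2 G=\bigoplus_{i\in\ZZ}\ell^2 K\cdot t^i$ be the closed subspace of all vectors whose $\ell^2 K$-coordinates lie in $(\ker x_0)^{\perp}$; its projection is $1-p$ acting coordinatewise, a projection in $\caln K\subseteq\caln G$, so $\dim_{\caln G}\calw=\tr_{\caln K}(1-p)=1-d$. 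Since $A$ kills every coordinate of a vector of $\calw$, we have $x'w=xw$ for $w\in\calw$, whence $\ker x\cap\calw=\ker x'\cap\calw=0$. The Kaplansky parallelogram law for the trace on $\caln G$ then gives
\[\dim_{\caln G}\ker x+(1-d)=\tr_{\caln G}(\pi_{\ker x}\vee\pi_{\calw})\leqslant 1,\]
so $\dim_{\caln G}\ker x\leqslant d=\nul x$.

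\emph{Main obstacle.} The delicate point is the construction of $x'=x+A$: the correction term must simultaneously vanish on the large subspace $\calw$ — so that bounding $\ker x\cap\calw$ is the same as bounding $\ker x'\cap\calw$ — and convert the non-injective $x_0$ into an injective operator so that \Cref{Linnell zero divs} applies; reconciling these two requirements is precisely what pins down $A^*A=\pi_{\ker x_0}$, $AA^*=\pi_{(\overline{\im x_0})^{\perp}}$, and the existence of such an $A$ is where the finiteness of $\caln K$ is used. By contrast, the denominator-clearing reduction and the dimension bookkeeping (checking that the subspaces involved are $\caln G$-submodules of the stated dimensions) are routine.
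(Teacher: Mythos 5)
Your proof is correct and follows essentially the same route as the paper's: clear denominators via the Ore property to land in $\caln K[t^{\pm 1}]$, use the polar decomposition (plus finiteness of $\caln K$) to produce a partial isometry carrying $\ker x_0$ onto $(\overline{\im x_0})^{\perp}$, add it to make the initial coefficient injective, and invoke \Cref{Linnell zero divs}. The only difference is cosmetic: the paper concludes by noting that the added partial isometry is injective on $\ker x$ and bounding $\dim_{\caln G}\ker x$ by the dimension of its image, whereas you intersect $\ker x$ with the complementary subspace $\calw$ and apply the Kaplansky parallelogram law — the same dimension count packaged differently.
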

\begin{proof}
	The result is clear when $x=0$. Let us assume it is not.
	
	Without loss of generality we may assume that the power associated to $\init x$ is $0$. Since $\calu K$ is the Ore localisation of $\caln K$, there exist an injective operator $z \in \caln K$ such that $zx \in \caln G$ and $\ker zx = \ker x$.
	Let $V = \ker x$. The polar decomposition gives us a partial isometry
	$v \in \caln K$ such that $\ker v = \ker \init zx$ and $\im v = \overline{\im \init zx}$.	Taking $u = 1-v$ we get a partial isometry $u$ with
$\ker u = (\ker \init zx)^\perp$ and $\im u = (\im \init zx)^\perp$.
	Then
	\[\init (u+zx) = u + \init zx\]
	is injective, and therefore $u + zx$ is injective by \cref{Linnell zero divs}. This means that $u\vert_V$ is injective as well, and hence $\dim_{\caln G} V \leqslant \dim_{\caln G} \im u$. Now, the latter dimension is equal to the $\caln G$-trace of $uu^*$. Since $uu^* \in \caln K$, this is equal to the $\caln K$-trace, and hence to $\dim_{\caln K} \im u = \nul zx = \nul x$.
\end{proof}

\begin{remark}
	\label{dim equality}
The proof above also shows that if $x \in \calu K$ then $\dim_{\caln G} \ker x = \dim_{\caln K} \ker x$.	
\end{remark}

\subsection{Approximate Ore condition}

We now introduce the main technical tool of this section.

\begin{prop}[Approximate Ore condition]
	\label{approx Ore}
	For every $q,q' \in \caln K[t^{\pm 1}]$ and every $\epsilon>0$  there exist $r,r' \in \caln K[t^{\pm 1}]$ such that $\nul r < \nul q' +  \epsilon$ and $\nul r' < \nul q + \nul q' +  \epsilon$, and
	\[
	qr = q'r'.
	\]
\end{prop}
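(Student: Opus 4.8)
The plan is to construct the pair $(r,r')$ by an approximate Euclidean algorithm performed inside the twisted power-series ring $\calu K[t^{\pm 1}\rrbracket$, and then to clear denominators in order to return to $\caln K[t^{\pm 1}]$. First I would make two easy reductions. Multiplying $q$ and $q'$ on the right by suitable powers of $t$ affects neither the identity to be proved nor any nullity, so I may assume that $\init q$ and $\init q'$ both have associated power $0$; write $q_0,q_0'\in\caln K$ for the corresponding pure parts, and dispose of the trivial cases $q=0$ and $q'=0$. Then I would note that the bound on $\nul r'$ is automatic once we have the bound on $\nul r$: if $qr=q'r'$ and no cancellation occurs in these products (a point to be arranged by passing to reduced representatives, or checked separately), then comparing pure parts of the common element gives
\[
\nul r'\ \le\ \nul(q'r')\ =\ \nul(qr)\ \le\ \nul q+\nul r\ <\ \nul q+\nul q'+\epsilon ,
\]
since the nullity of a product is bounded below by the nullity of the right factor and above by the sum of the two nullities. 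So it suffices to produce $r$, together with some $r'$ realising $qr=q'r'$, with $\nul r<\nul q'+\epsilon$.

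The structural point that makes the algorithm run is that, for any $u\in\calu K[t^{\pm 1}]$, replacing $q$ by $q-q'u$ alters neither the right ideal $q'\,\calu K[t^{\pm 1}]$ nor the set of those $r$ admitting some $r'$ with $qr=q'r'$ (because $q'ur\in q'\,\calu K[t^{\pm 1}]$); thus I am free to reduce $q$ modulo $q'$. To run one reduction step I must invert $q_0'$ approximately, and here I would use its polar decomposition $q_0'=vp$ together with the spectral calculus: on the range of the spectral projection $e_\delta=\chi_{[\delta,\infty)}(p)$ the operator $p$ is bounded below by $\delta$, hence invertible there, while $\dim_{\caln K}\ker e_\delta\to\dim_{\caln K}\ker q_0'=\nul q'$ as $\delta\to 0^{+}$ by normality of the trace. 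This allows the $k$-th reduction step to be carried out at the price of at most $\epsilon\cdot 2^{-k}$ of extra nullity; and since $q$ and $q'$ have finite support, after finitely many steps the remainder is in a form that can be handled directly — either it is $0$, and $r=1$ works, or its coefficients have been squeezed into a subspace small enough that they are annihilated on the right by the projection onto their common kernel, which then has co-small dimension. Summing the geometric $\epsilon$-budget keeps $\nul r<\nul q'+\epsilon$. The elements $r,r'$ produced in this way have coefficients in $\calu K$; multiplying both on the right by a common regular (hence injective) denominator $s\in\caln K$, which exists because $\calu K$ is the Ore localisation of $\caln K$, lands them in $\caln K[t^{\pm 1}]$ without increasing any nullity and without disturbing the identity $qr=q'r'$.

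The genuinely hard part — where I expect the ``blood, sweat, tears, and toil'' to go — is to arrange the iteration so that it simultaneously (a) terminates inside the polynomial ring, rather than drifting into honest power series, since a careless division step enlarges the top degree, and (b) keeps the accumulated nullity under control, since replacing a pure part $q_0$ by $(1-e_\delta)q_0$ could in principle destroy injectivity outright. Managing (b) is precisely why one must keep careful track of the kernels of the pure parts of the successive remainders — this is the purpose of the notions of \emph{admissible} and \emph{asymptotically injective} sequences set up above — and it is exactly at this step that Linnell's zero-divisor theorem \Cref{Linnell zero divs} and its corollary \Cref{Linnell cor} become indispensable: they let one pass between $\caln K$-dimensions of kernels of pure parts and $\caln G$-dimensions of kernels of operators on $\ell^2G$, which is what makes the bookkeeping close up.
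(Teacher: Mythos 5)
Your peripheral reductions are sound and match what is actually needed: normalising the associated powers to $0$, deducing the bound on $\nul r'$ from the one on $\nul r$ by comparing degree-zero coefficients of $qr=q'r'$ (this gives $q_0x_0=q'_0y_0$, whence $\nul r'\le\dim_{\caln K}\ker(q_0x_0)\le\nul q+\nul r$; no ``no cancellation'' caveat is needed), and clearing denominators at the end via the Ore localisation $\calu K=(\caln K)S^{-1}$. But the heart of the proposition --- actually producing $r$ with $\nul r<\nul q'+\epsilon$ --- is exactly the part you defer, and the approximate Euclidean algorithm you sketch does not obviously close up. Two concrete problems. First, termination: reducing $q$ from below by subtracting $q'u$ raises the bottom degree of the remainder but can also raise its top degree, so the number of nonzero coefficients need not decrease, and the assertion that ``after finitely many steps the remainder is in a form that can be handled directly'' is unsubstantiated; the failure of any such division process to terminate is precisely why the authors could not establish the genuine Ore condition for $\calu K[t^{\pm1}]$ and had to settle for an approximate one. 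Second, the nullity bookkeeping: each division step compresses through a spectral projection of the previous leading coefficient, and the kernels created at different steps propagate through twisted products; a geometric $\epsilon 2^{-k}$ budget controls the total defect only if the number of steps is finite and the defects merely add, neither of which you establish. So the proposal has a genuine gap at its centre.

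For contrast, the paper sidesteps any algorithm with a Tamari-style dimension count. For each $k$ one considers the right $\caln K$-linear map $\lambda_k\colon\caln K^{2k}\to\caln K^{k+N}$ encoding $(r,r')\mapsto qr-q'r'$ for twisted polynomials of degree $<k$, so that $d_k=\dim_{\caln K}\ker\lambda_k\ge k-N$. If the projection of $\ker\lambda_k$ to the $x_0$-coordinate always had dimension at most $1-\nul q'-\epsilon$, a telescoping estimate would give $d_k-d_{k-1}\le 1-\epsilon$ and hence $d_k\le(1-\epsilon)k$, contradicting $d_k\ge k-N$ for large $k$. One then chooses an element of $\ker\lambda_k$ whose $x_0$-coordinate is norm-close to the projection onto a subspace of dimension greater than $1-\nul q'-\epsilon$; its first coordinate is the desired $r$. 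Any repair of your approach would in effect have to reproduce this global count rather than an iterated local division.
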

\begin{proof}
	The proof is inspired by Tamari's argument \cite{Tamari1957}.
	
	It is clear that we may assume that the powers associated to $\init q$ and $\init q'$ are both $0$. Let $N$ denote the maximum of the degrees of $q$ and $q'$. We write
	\[
	q = \sum_{i=0}^N t^i q_i, \qquad q' = \sum_{i=0}^N t^i q'_i
	\]
	with $q_i, q_i' \in \caln K$.
	 For a natural number $k$, consider the right $\caln K$-linear map
	\begin{align*}
	\lambda_k \colon \caln K^{2k} &\to \caln K^{k+N} \\
	(x_0, y_0, \dots, x_{k-1}, y_{k-1}) &\mapsto \left( \sum_{i+j = l} t^{-j}(q_i t^j x_j -q'_i t^j y_j) \right)_{l}.
	\end{align*}
 (Secretly, we think of $(x_0, y_0, \dots, x_{k-1}, y_{k-1})$ as representing two twisted polynomials $r = \sum_{i=0}^{k-1} t^i x_i$ and $r' = \sum_{i=0}^{k-1} t^i y_i$, and lying in $\ker \lambda_k$ translates directly into $qr = q'r'$, since the right-hand side above collects the terms of $qr - q'r'$ according to the power of $t$.)

Let $d_k  = \dim_{\caln K} \ker \lambda_k$, and note that $d_k \geqslant k - N$.
We may embed $\caln K^{2k} \to \caln K^{2+2k}$ and $\caln K^{k + N} \to \caln K^{1 + k +N}$ by augmenting vectors with zeroes in initial positions. These embeddings form commutative squares with the maps $\lambda_k$ and $\lambda_{k+1}$.
The image of $\ker \lambda_k$ under the first map will be denoted by $t \ker \lambda_k$; it is a subspace of $\ker \lambda_{k+1}$ of dimension $d_k$.

Let $p, p' \colon \caln K^{2k} \to \caln K$ denote the projections onto, respectively, the first and the second factor.

\begin{claim}
	For some $k$ we have
	\[
	\dim_{\caln K} p(\ker \lambda_k) > 1-\nul q' - \epsilon.
	\]
\end{claim}
\begin{claimproof}[Proof of claim]
Without loss of generality, we will assume that $\epsilon<1$.	
	
Consider $(x_0, y_0, \dots, x_{k-1}, y_{k-1}) \in  \ker p\vert_{\ker \lambda_k}$. We immediately see that $x_0 = 0$, and $q'_0y_0=0$. Hence $p'$ sends $\ker p\vert_{\ker \lambda_k}$ to $\ker q'_0$. Therefore,
\[
\dim_{\caln K} \left(\ker p\vert_{\ker \lambda_k}\right) - \nul q' \leqslant \dim_{\caln K} \left( \ker p\vert_{\ker \lambda_k} \cap \ker p'\vert_{\ker \lambda_k}\right).
\]
The intersection of the kernels on the right-hand side is precisely $t \ker \lambda_{k-1}$. Hence,
\[
d_k - \dim_{\caln K} p( \ker \lambda_k) - \nul q' \leqslant d_{k-1}.
\]
Rearranging, we obtain
\[
\dim_{\caln K} p( \ker \lambda_k) \geqslant d_{k} - d_{k-1} - \nul q'.
\]
If the left-hand side is bounded above by $1-\nul q' - \epsilon$ for all $k$, then
\[
1 - \epsilon \geqslant  d_{k} - d_{k-1}
\]
and so adding such  terms together gives
\[
(1-\epsilon)k \geqslant d_k \geqslant k - N
\]
for all $k$, which is a contradiction. We conclude that for some $k$ the $\caln K$-dimension of $p(\ker \lambda_k)$ is greater than $1-\nul q' - \epsilon$, as claimed.
\end{claimproof}

It follows that for $k$ as above, $p(\ker \lambda_k)$ is dense inside of $\pi_V \caln K$ for some closed subspace $V \leqslant \ell^2(K)$ of $\caln K$-dimension greater than $1-\nul q' - \epsilon$. Take $x \in \ker \lambda_k$ such that $\pi_V - p(x)$ has norm less than $1$. Then
\[V \cap \ker p(x) = \{0\},\]
 and so $\ker p(x)$ has dimension less than $\nul q' + \epsilon$.
Write
\[x = (x_0, y_0, \dots, x_{k-1}, y_{k-1});\]
 let $r = \sum_{i=0}^{k-1} t^i x_i$ and $r' = \sum_{i=0}^{k-1} t^i y_i$. We have shown that  $\ker p(x) = \ker x_0$ has dimension less than $\nul q' + \epsilon$. Also, $x \in \ker \lambda_k$ means precisely that
\[
qr = q'r'.
\]
Finally, the last equality implies that $q_0 \init r = q'_0 \init {r'} $, and hence
\[\nul r' \leqslant \nul q + \nul r < \nul q + \nul q' +  \epsilon. \qedhere \]
\end{proof}

\begin{remark}
	\label{caln to calu}
	Clearly, if $q$ and $q'$ are Laurent polynomials over $\calu K$, we may multiply them by a suitable injective operator in $\caln K$ and obtain Laurent polynomials over $\caln K$ with the same nullities. Hence the conclusion of the above statement holds verbatim if $q$ and $q'$ lie in $\calu K[t^{\pm 1}]$, and we will use it in this generality.
\end{remark}

\begin{corollary}
	\label{common multi}
	Every two asymptotically injective sequences $(q_n)_n$ and $(q'_n)_n$ over $\calu K[t^{\pm 1}]$ admit an \emph{asymptotically injective common multiple}, that is, an asymptotically injective sequence $(x_n)_n$ over $\caln K[t^{\pm 1}]$ such that there exist two asymptotically injective sequences $(y_n)_n$ and $(z_n)_n$ over $\caln K[t^{\pm 1}]$ with $x_n = q_n y_n = q'_n z_n$ for every $n$.
\end{corollary}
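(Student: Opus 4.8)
The plan is to invoke the Approximate Ore condition (\Cref{approx Ore}, in the form provided by \Cref{caln to calu}) once for each index $n$, with a geometrically decaying error $\epsilon = 2^{-n}$, and then to assemble the outputs into a single sequence, using the observation that a term-wise product of asymptotically injective sequences is again asymptotically injective.

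First I would pass from $\calu K[t^{\pm 1}]$ to $\caln K[t^{\pm1}]$. For each $n$, the finitely many coefficients of $q_n$ and $q_n'$ lie in $\calu K = (\caln K)S^{-1}$ by \Cref{Ore for von Neumann}, so by the Ore property they admit a common right denominator: there is a regular $s_n \in \caln K$, hence one invertible over $\calu K$, with $q_n s_n,\ q_n' s_n \in \caln K[t^{\pm 1}]$. Right multiplication of a Laurent polynomial by an element of $\caln K$ that is invertible over $\calu K$ changes neither the associated power of the initial term nor the $\caln K$-dimension of the kernel of its pure part, so $\nul(q_n s_n) = \nul q_n$ and $\nul(q_n's_n) = \nul q_n'$, and the sequences $(q_n s_n)_n$, $(q_n's_n)_n$ remain asymptotically injective.

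Next, for each $n$ apply \Cref{approx Ore} (via \Cref{caln to calu}) to the pair $q_n s_n,\ q_n's_n$ with $\epsilon = 2^{-n}$, obtaining $\rho_n,\rho_n' \in \caln K[t^{\pm1}]$ satisfying $(q_n s_n)\rho_n = (q_n's_n)\rho_n'$, $\nul\rho_n < \nul q_n' + 2^{-n}$, and $\nul\rho_n' < \nul q_n + \nul q_n' + 2^{-n}$. By inspecting the construction in the proof of \Cref{approx Ore} after its reduction to the case of initial terms with associated power zero — and, if necessary, multiplying both $\rho_n$ and $\rho_n'$ on the right by one and the same power of $t$, which affects neither the displayed equation nor any of the nullities — I may additionally assume that $\init\rho_n$ and $\init\rho_n'$ both have non-negative associated power. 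Then $(\rho_n)_n$ and $(\rho_n')_n$ are admissible, and since $\sum_n(\nul q_n' + 2^{-n}) < \infty$ and $\sum_n(\nul q_n + \nul q_n' + 2^{-n}) < \infty$, they are in fact asymptotically injective.

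Finally, put $y_n = s_n\rho_n$, $z_n = s_n\rho_n'$, and $x_n = (q_n s_n)\rho_n = (q_n's_n)\rho_n'$, so that $x_n,y_n,z_n \in \caln K[t^{\pm1}]$ and $x_n = q_n y_n = q_n' z_n$. Left multiplication by the $\calu K$-invertible element $s_n$ again preserves associated powers and the kernel dimensions of pure parts, so $(y_n)_n$ and $(z_n)_n$ are asymptotically injective; and $(q_n s_n)_n$ is asymptotically injective, so by the term-wise-product observation $(x_n)_n = ((q_n s_n)\rho_n)_n$ is asymptotically injective. This produces the desired asymptotically injective common multiple. I expect the only genuinely delicate point — the main obstacle — to be the bookkeeping of associated powers: \Cref{approx Ore} is proved only in a normalised situation, and one has to make sure that after un-normalising, the initial terms of the $\rho_n$ (and hence of $x_n,y_n,z_n$) do not acquire associated powers tending to $-\infty$ with $n$, which would violate admissibility; this is exactly what the simultaneous shift by a power of $t$ repairs. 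Everything else — the nullity estimates and the descent to $\caln K$ — is routine given \Cref{approx Ore} and \Cref{caln to calu}.
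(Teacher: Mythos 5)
Your proof is correct and follows essentially the same route as the paper's: apply the approximate Ore condition (\Cref{approx Ore}) with $\epsilon = 2^{-n}$ for each $n$ and sum the resulting nullity bounds to get asymptotic injectivity. The additional bookkeeping you supply --- the common right denominator $s_n$ that lands everything in $\caln K[t^{\pm 1}]$, and the simultaneous right multiplication by a power of $t$ that keeps the associated powers of $(y_n)_n$ and $(z_n)_n$ bounded below --- correctly fills in details that the paper's one-line argument and \Cref{caln to calu} leave implicit.
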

\begin{proof}
	For every $n$ we obtain $x_n, y_n$, and $z_n$ with $x_n = q_n y_n = q'_n z_n$ from \cref{approx Ore}, setting $\epsilon = 2^{-n}$. This way
	\[
	\sum_n \nul x_n \leqslant \sum_n \left( \nul q_n + \nul y_n  \right) \leqslant \sum_n \left( \nul q_n + \nul q'_n + 2^{-n} \right) < \infty
	\]
and similarly for $(y_n)_n$ and $(z_n)_n$.
\end{proof}

\subsection{Asymptotic agreement}

In this section we are dealing with sequences, but in reality we think of them as proxies, and we are really interested in their limits (which we will define later). Hence it is natural to introduce an equivalence relation on sequences.

\begin{defn}
	We say that two sequences $(x_n)_n$ and $(y_n)_n$ in $\calu G$ \emph{asymptotically agree as operators}, written $(x_n)_n \apa (y_n)_n$,  if
	\[
	\sum_n \dim_{\caln G} (\ker(x_n - y_n)^\perp) < \infty.
	\]

A sequence $(x_n)_n$ in $\calu G$ \emph{stabilises} if $(x_n)_n \apa(x_{n+1})_n$.
\end{defn}

Note that if $(x_n)_n$ and $(y_n)_n$ are sequences in $\calu K$, then
\[\dim_{\caln G} \ker(x_n - y_n) = \dim_{\caln K} \ker(x_n - y_n)\]
 by \cref{dim equality}, and hence such sequences asymptotically agree as sequences in $\calu K$ \iff they asymptotically agree as sequences in $\calu G$. Therefore, there is no need to specify over which group we are working.

It is very easy to see that being in asymptotic agreement is an equivalence relation.

\begin{lemma}
\label{apa}
Let $(x_n)_n$, $(y_n)_n$, and $(z_n)_n$ be sequences in $\calu G$. If $(x_n)_n \apa (y_n)_n$, then all of the following hold:
\begin{enumerate}
	\item \label{apa 1}$(x_n + z_n)_n \apa (y_n + z_n)_n$,
	\item \label{apa 2}$(x_nz_n)_n \apa (y_nz_n)_n$,
	\item \label{apa 3}$(z_nx_n)_n \apa (z_ny_n)_n$.
\end{enumerate}	
\end{lemma}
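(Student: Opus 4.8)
The plan is to reduce all three claims to a single sub-rank inequality for the quantity $\rho(x) \coloneqq \dim_{\caln G}(\ker x)^\perp$ attached to $x \in \calu G$. Applying the trace property of the von Neumann trace to the partial isometry in the polar decomposition of $x$ — exactly as in the proof of \cref{Linnell cor} — shows that $\rho(x)$ equals $\dim_{\caln G}\overline{\im x}$ as well (compare \cref{dim equality}). In this notation the hypothesis $(x_n)_n \apa (y_n)_n$ says precisely that $\sum_n \rho(x_n - y_n) < \infty$; and since $z_n$ cancels in the difference $(x_n + z_n) - (y_n + z_n)$ while multiplication distributes over subtraction, each of (1)--(3) amounts to bounding $\sum_n \rho(w_n)$ for $w_n$ equal to $x_n - y_n$, to $(x_n - y_n)z_n$, and to $z_n(x_n - y_n)$ respectively.

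The key step is the inequality $\rho(ab) \leqslant \rho(a)$ and $\rho(ab) \leqslant \rho(b)$ for all $a, b \in \calu G$. For the first I would use $\overline{\im(ab)} \subseteq \overline{\im a}$: any vector in the image of $ab$ — hence in the image of its graph closure — is a limit of vectors $a(bv) \in \im a$, so monotonicity of $\dim_{\caln G}$ on closed $G$-invariant subspaces gives $\rho(ab) = \dim_{\caln G}\overline{\im(ab)} \leqslant \dim_{\caln G}\overline{\im a} = \rho(a)$. For the second I would use $\ker b \subseteq \ker(ab)$: if $bv = 0$ then $v$ lies in the domain of the composite $a \circ b$ and is sent to $a(0) = 0$, so $(\ker(ab))^\perp \subseteq (\ker b)^\perp$ and hence $\rho(ab) = \dim_{\caln G}(\ker(ab))^\perp \leqslant \dim_{\caln G}(\ker b)^\perp = \rho(b)$.

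Granting this, the three parts are immediate: (1) holds because the two sequences have the same difference $x_n - y_n$; (2) holds because $\sum_n \rho\big((x_n - y_n)z_n\big) \leqslant \sum_n \rho(x_n - y_n) < \infty$; and (3) holds because $\sum_n \rho\big(z_n(x_n - y_n)\big) \leqslant \sum_n \rho(x_n - y_n) < \infty$. The only point that needs care is that the products live in $\calu G$ and are defined as graph closures of naive composites, so one should confirm that this closure operation preserves the containments $\overline{\im(ab)} \subseteq \overline{\im a}$ and $\ker b \subseteq \ker(ab)$, and that $\rho$ is well defined via $\dim_{\caln G}(\ker x)^\perp = \dim_{\caln G}\overline{\im x}$; both facts are routine consequences of the structure theory of $\calu G$ recalled in \cref{sec UG}, and I do not anticipate a genuine obstacle beyond this bookkeeping.
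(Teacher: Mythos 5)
Your proposal is correct and follows essentially the same route as the paper, which disposes of (1) by noting the differences coincide and of (2)--(3) by the observation that the kernel dimension of a product is bounded below by that of either factor — exactly your inequality $\rho(ab)\leqslant\min(\rho(a),\rho(b))$ in the complementary formulation. The bookkeeping you flag about graph closures is indeed routine and causes no issue.
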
	
\begin{proof}
	This is immediate. For \eqref{apa 2} and \eqref{apa 3}, it is enough to observe that the dimension of the kernel of a product of operators is bounded from below by the dimension of the kernel of either factor.
\end{proof}

We now extend the relation $\apa$ to power series.
\begin{defn}
	Two sequences $(x_n)_n$ and $(y_n)_n$ over $\calu K [ t^{\pm 1} \rrbracket$ \emph{asymptotically agree as power series}, written $(x_n)_n \apa_K (y_n)_n$, if 
	for every fixed degree $d$, the sequence (over $\calu K$) of coefficients of $x_n$ by $t^d$ and the sequence of coefficients of $y_n$ by $t^d$ asymptotically agree as operators.
	
	A sequence $(x_n)_n$  over $\calu K [ t^{\pm 1} \rrbracket$ is \emph{$K$-stabilising} if $(x_n)_n \apa_K (x_{n+1})_n$.
\end{defn}

For power series, we add $K$ as a subscript to $\apa$ due to the potential confusion for sequences of Laurent polynomials in $\calu K[t^{\pm 1}]$. Such Laurent polynomials are at the same time elements of $\calu G$, in which case the definition of $\apa$ applies, and Laurent power series, in which context we use $\apa_K$. It is clear that  $\apa_K$ is again an equivalence relation.

We now collect basic arithmetic properties of the equivalence relation $\apa_K$.

\begin{lemma}
	\label{apaK}
	Let $(x_n)_n$, $(y_n)_n$, and $(z_n)_n$ be sequences in $\calu K [t^{\pm 1} \rrbracket$. If $(x_n)_n \apa_K (y_n)_n$, then all of the following hold:
	\begin{enumerate}
		\item \label{apaK 1}$(x_n + z_n)_n \apa_K (y_n + z_n)_n$,
		\item \label{apaK 2}$(x_nz_n)_n \apa_K (y_nz_n)_n$,
		\item \label{apaK 3}$(z_nx_n)_n \apa_K (z_ny_n)_n$.
	\end{enumerate}
\end{lemma}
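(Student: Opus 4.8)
The plan is to reduce everything to the operator-level statement already established in \cref{apa}, applied coefficient by coefficient. Fix a degree $d \in \ZZ$. For \eqref{apaK 1}, the coefficient of $t^d$ in $x_n + z_n$ is the sum of the coefficient of $t^d$ in $x_n$ and in $z_n$, and likewise for $y_n + z_n$; so the claim for this fixed $d$ is exactly \cref{apa}\eqref{apa 1} applied to the three sequences of $d$-th coefficients in $\calu K$. Since this works for every $d$, the definition of $\apa_K$ gives $(x_n + z_n)_n \apa_K (y_n + z_n)_n$.

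For the multiplicative parts \eqref{apaK 2} and \eqref{apaK 3}, the extra subtlety is that a coefficient of the product is an \emph{infinite} sum of products of coefficients, so I cannot just cite a single instance of \cref{apa}. The key point to record is admissibility: because $(x_n)_n$, $(y_n)_n$, $(z_n)_n$ lie in $\calu K[t^{\pm 1}\rrbracket$, each individual element has initial power bounded below, so for a \emph{fixed} $n$ and a fixed output degree $d$ the sum defining the $d$-th coefficient of $x_n z_n$ (resp.\ $y_n z_n$) has only finitely many terms. Write $x_n = \sum_i t^i a_{n,i}$, $y_n = \sum_i t^i b_{n,i}$, $z_n = \sum_j t^j c_{n,j}$ with $a_{n,i}, b_{n,i}, c_{n,j} \in \calu K$. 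The $d$-th coefficient of $x_n z_n$ is $\sum_{i+j=d} \nu(t^{-j})\big( ?\big)$-twisted products $a_{n,i}\,\sigma^i(c_{n,j})$ (the exact twist is as in the definition of the twisted power series ring); subtracting the $d$-th coefficient of $y_n z_n$ gives $\sum_{i+j=d}\big(a_{n,i} - b_{n,i}\big)\,\sigma^i(c_{n,j})$, a finite sum. Now I use the elementary fact (already invoked in the proof of \cref{apa}) that the kernel of a sum of operators contains the intersection of the kernels of the summands, together with \cref{apa}\eqref{apa 2}: for each pair $(i,j)$ with $i+j=d$ the sequence $\big((a_{n,i}-b_{n,i})\sigma^i(c_{n,j})\big)_n$ asymptotically agrees with $0$, hence
\[
\sum_n \dim_{\caln G}\big(\ker\big(\text{$d$-th coeff of }(x_n z_n - y_n z_n)\big)^\perp\big) \leqslant \sum_{i+j=d}\ \sum_n \dim_{\caln G}\big(\ker\big((a_{n,i}-b_{n,i})\sigma^i(c_{n,j})\big)^\perp\big) < \infty,
\]
the inner sums being finitely many (bounded by the degree spread that can contribute to output degree $d$, using admissibility of $(y_n)_n$ or $(z_n)_n$ to bound how many $(i,j)$ occur — more precisely, the relevant indices $i$ range over the support of the relevant power series, which for a single degree $d$ is finite). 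This handles \eqref{apaK 2}; \eqref{apaK 3} is identical after swapping the roles, using \cref{apa}\eqref{apa 3} and writing the $d$-th coefficient of $z_n x_n - z_n y_n$ as $\sum_{i+j=d} c_{n,i}\,\sigma^i(a_{n,j} - b_{n,j})$.

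The step I expect to require the most care is the bookkeeping in the previous paragraph: making sure that for a fixed output degree $d$ only finitely many index pairs $(i,j)$ contribute, so that the estimate is a finite sum of convergent series rather than a potentially divergent double sum. This is where the hypothesis that we are in the power series ring $\calu K[t^{\pm 1}\rrbracket$ (one-sided-bounded support) is essential, as is the precise definition of the twist $\sigma$ coming from the action of $t$; once this is pinned down, no genuinely new idea beyond \cref{apa} is needed, and the proof is a routine verification which I would compress to a couple of sentences.
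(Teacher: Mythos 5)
Your approach is exactly the paper's: the entire published proof of this lemma is the single sentence ``This follows from \cref{apa}'', i.e.\ apply the operator-level statement degree by degree, which is what you do. Part \eqref{apaK 1} is indeed immediate in this way, and for \eqref{apaK 2}--\eqref{apaK 3} you correctly isolate the only genuine subtlety, namely that the $d$-th coefficient of a product is a sum of twisted products of coefficients and one must control how many index pairs contribute.

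One caveat on that step: you justify the interchange of the sums over $n$ and over $(i,j)$ by ``admissibility of $(y_n)_n$ or $(z_n)_n$'', but admissibility is not a hypothesis of the lemma as stated --- the $(x_n)_n$, $(y_n)_n$, $(z_n)_n$ are arbitrary sequences in $\calu K[t^{\pm 1}\rrbracket$. For a \emph{fixed} $n$ the sum defining a given coefficient is finite (each element has support bounded below), but the set of contributing pairs $(i,j)$ with $i+j=d$ can drift with $n$; if it does, $\sum_n \dim_{\caln G}\bigl(\ker(\cdot)^\perp\bigr)$ becomes an infinite sum of convergent series and can diverge (take $x_n=t^n u$ with $u$ injective, $y_n=0$, $z_n=t^{d-n}$: then $(x_n)_n\apa_K(0)_n$ but $(x_nz_n)_n\not\apa_K(0)_n$). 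So the multiplicative parts really do require the initial powers of the sequences being multiplied to be uniformly bounded below. This is a defect of the lemma's statement rather than of your argument --- the paper's one-line proof elides the same point, and in every application the sequences involved are admissible or asymptotically injective, so nothing downstream is affected --- but if you want your write-up to be airtight you should either add admissibility as a hypothesis for \eqref{apaK 2}--\eqref{apaK 3} or note explicitly that it holds wherever the lemma is invoked.
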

\begin{proof}
	This follows from \cref{apa}.
\end{proof}

\subsection{Partial inverse}

We are now approaching the main technical onslaught. We will introduce two constructions that play the role of inverses of elements in $\calu K[t^{\pm 1}]$, one in $\calu G$, and one in $\calu K[ t^{\pm 1} \rrbracket$. They will share many properties.

Recall that in $\calu G$ we have the notion of a partial inverse $x \mapsto x^\dagger$.
The partial inverse of an element $x \in \calu K$ lies in $\calu K$, and for such an $x$ we have $(t^i x)^\dagger =  x^\dagger t^{-i}$.

\begin{lemma}
	\label{agreement}
	Let $(p_n)_n$ be a sequence in $\calu K [t^{\pm 1}]$,  and let $(s_n)_n$ and $({q_n})_n$ be asymptotically injective sequences in $\calu K [t^{\pm 1}]$. All of the following hold:
	\begin{enumerate}
		\item \label{agreement 1} $\left( (q_n s_n)^\dagger \right)_n \apa \left( {s_n}^\dagger {q_n}^\dagger \right)_n$.
		\item \label{agreement 2} $(s_n {s_n}^\dagger)_n \apa (1)_n$.
		\item \label{agreement 3} $(p_n {q_n}^\dagger)_n \apa (p_n s_n (q_ns_n)^\dagger)_n$.
	\end{enumerate}
\end{lemma}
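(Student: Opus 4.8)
My plan is to prove the three statements in order, using the properties of the partial inverse in $\calu G$ and the arithmetic of $\apa$ from \cref{apa}, and crucially the fact that asymptotically injective sequences have summable nullity together with \cref{Linnell cor}. The guiding principle throughout is that, for $x \in \calu K[t^{\pm 1}]$, we have $\dim_{\caln G} \ker x \leqslant \nul x$ by \cref{Linnell cor}, so if $(q_n)_n$ is asymptotically injective then $\sum_n \dim_{\caln G}\ker q_n < \infty$; and that for an operator $y \in \calu G$, the ``defect from being invertible,'' namely $\dim_{\caln G}\ker y^\perp$ plus $\dim_{\caln G}\ker y^\dagger$ (equivalently $\dim_{\caln G}(\overline{\im y})^\perp$), is small. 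The key observation powering everything is that if $x$ is injective then $x^\dagger x = \pi_{(\ker x)^\perp} = 1$, and $x x^\dagger = \pi_{\overline{\im x}}$, so the only obstruction to $x^\dagger$ being a genuine two-sided inverse is the co-rank $\dim_{\caln G}(\overline{\im x})^\perp$.

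For \eqref{agreement 2}: since $s_n$ is asymptotically injective, $\ker s_n$ has dimension at most $\nul s_n$, and the defect $\dim_{\caln G} \ker (s_n s_n^\dagger - 1)^\perp$ is bounded by $\dim_{\caln G}(\overline{\im s_n})^\perp$; I need to bound this in terms of $\nul s_n$ as well. Here I would use the normal form: writing $\init s_n = t^{k} (s_n)_k$ with associated power $k$, the cokernel of $s_n$ as an operator on $\ell^2 G$ injects (after applying $t^{-k}$ and the Linnell-type argument of \cref{Linnell cor}) into the cokernel of the pure part $(s_n)_k$, whose dimension equals $\nul s_n$ by self-adjointness considerations — actually, more directly, $\dim_{\caln G}(\overline{\im s_n})^\perp = \dim_{\caln G}\ker s_n^* \leqslant \nul (s_n^*) = \nul s_n$ since the pure part of the initial term behaves symmetrically under adjoint up to reindexing. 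Summing over $n$ gives \eqref{agreement 2}. For \eqref{agreement 1}: since $(q_n)_n$, $(s_n)_n$, and hence $(q_n s_n)_n$ are asymptotically injective (the remark after the definition of asymptotically injective), all three of $q_n$, $s_n$, $q_ns_n$ are injective with summable-nullity kernels; then $(q_n s_n)^\dagger$ and $s_n^\dagger q_n^\dagger$ both act as left inverses up to a projection, and comparing $(q_n s_n)(q_ns_n)^\dagger$ with $(q_ns_n)(s_n^\dagger q_n^\dagger) = q_n (s_n s_n^\dagger) q_n^\dagger$ and invoking \eqref{agreement 2} plus \cref{apa}\eqref{apa 2},\eqref{apa 3} reduces the claim to the injectivity of $q_n s_n$, which lets me cancel it from the left up to a summable error.

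For \eqref{agreement 3}: this should follow by multiplying \eqref{agreement 1} on the left by $p_n$ (using \cref{apa}\eqref{apa 3}) to get $(p_n (q_n s_n)^\dagger)_n \apa (p_n s_n^\dagger q_n^\dagger)_n$, rewriting $(q_n s_n)^\dagger = s_n^\dagger \cdots$ — wait, I actually want it the other way: I would instead start from $(p_n q_n^\dagger)_n$ and insert $s_n s_n^\dagger \apa 1$ via \cref{apa}\eqref{apa 2} to get $(p_n q_n^\dagger)_n \apa (p_n s_n s_n^\dagger q_n^\dagger)_n$, then apply \eqref{agreement 1} (multiplied on the left by $p_n s_n$ using \cref{apa}\eqref{apa 3}) to replace $s_n^\dagger q_n^\dagger$ by $(q_n s_n)^\dagger$, yielding $(p_n q_n^\dagger)_n \apa (p_n s_n (q_n s_n)^\dagger)_n$. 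The main obstacle I anticipate is \eqref{agreement 1}: making precise the claim that $(q_ns_n)^\dagger$ and $s_n^\dagger q_n^\dagger$ agree up to summable error requires carefully tracking that the composition $s_n^\dagger q_n^\dagger$ is, up to a projection onto a finite-codimension (in the $\caln G$-dimension sense, i.e. small-nullity) subspace, the honest inverse of $q_n s_n$ on its image, and this uses both \eqref{agreement 2} and the bound on cokernels from \cref{Linnell cor}; the bookkeeping of which dimension bounds are summable is the delicate part, but it is bounded throughout by $\sum_n (\nul q_n + \nul s_n) < \infty$.
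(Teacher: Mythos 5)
Your plan follows the paper's proof essentially step for step: item \eqref{agreement 2} via $1 - s_n s_n^\dagger = 1 - \pi_{\overline{\im s_n}}$, item \eqref{agreement 1} by comparing $(q_ns_n)(q_ns_n)^\dagger$ with $q_n s_n s_n^\dagger q_n^\dagger$ and then cancelling $q_n s_n$ on the left at the cost of $\dim_{\caln G}\ker(q_ns_n) \leqslant \nul q_n + \nul s_n$, and item \eqref{agreement 3} by inserting $s_n s_n^\dagger \apa 1$ and applying \eqref{agreement 1} together with \cref{apa}. The bookkeeping you worry about does close up, exactly as you predict, with all errors bounded by sums of nullities.

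One justification in your treatment of \eqref{agreement 2} is incorrect, though harmlessly so: you claim $\nul(s_n^*) = \nul s_n$ because ``the pure part of the initial term behaves symmetrically under adjoint up to reindexing.'' It does not. If $s_n = \sum_{i=k}^{N} t^i (s_n)_i$, then the initial (lowest-degree) term of $s_n^*$ comes from the \emph{top}-degree coefficient $(s_n)_N$, so $\nul(s_n^*)$ is the nullity of the leading coefficient of $s_n$, which in general differs from $\nul s_n$. You do not need this detour: $\dim_{\caln G}(\overline{\im s_n})^\perp = 1 - \dim_{\caln G}\overline{\im s_n} = \dim_{\caln G}\ker s_n \leqslant \nul s_n$, the last inequality being \cref{Linnell cor} applied to $s_n$ itself; this is exactly how the paper argues. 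With that one-line repair your argument is correct and coincides with the paper's.
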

\begin{proof} We prove the items in turn.
	\begin{enumerate}
		\item 	We have
		\[
		(q_ns_n)(q_ns_n)^\dagger - q_n s_n {s_n}^\dagger {q_n}^\dagger= \pi_{\overline{\im q_n s_n}} - q_n \pi_{\overline{\im s_n}} {q_n}^\dagger.
		\]
		The right-hand side is $0$ on $(\im q_n)^\perp$, since both summands are $0$ there, and on $q_n(\im s_n \cap (\ker q_n)^\perp)$, since the summands restrict to the identity there. \cref{Linnell cor} tells us that $\dim_{\caln G} \ker s_n \leqslant \nul s_n$. Hence
		\[
		\dim_{\caln G} \ker ( (q_ns_n)(q_ns_n)^\dagger - q_n s_n {s_n}^\dagger {q_n}^\dagger ) \geqslant 1- \nul s_n.
		\]
		Since, using the same argument as above,
		\[
		\dim_{\caln G} \ker q_ns_n \leqslant \nul q_n + \nul s_n,
		\]
		we conclude that
		\[
		\dim_{\caln G} \ker ( (q_ns_n)^\dagger - {s_n}^\dagger {q_n}^\dagger ) \geqslant 1- 2\nul s_n - \nul q_n.
		\]
		Thus,
		\[
		\sum_n \dim_{\caln G} \big( \ker ( (q_ns_n)^\dagger - {s_n}^\dagger {q_n}^\dagger )^\perp\big) \leqslant \sum_n ( 2\nul s_n + \nul q_n) < \infty.
		\]
		\item The operator
		\[
		1 - s_n {s_n}^\dagger   = 1 - \pi_{\overline{\im s_n}}
		\]
		has kernel of dimension at least $1- \nul s_n$, and we finish the argument as above.
		\item This follows immediately from the two items above and \cref{apa}\eqref{apa 2} and \eqref{apa 3}. \qedhere
	\end{enumerate}
\end{proof}


\begin{lemma}
	\label{adjoints and daggers}
	Let $(x_n)_n$ and $(y_n)_n$ be sequences in $\calu G$.
	If $(x_n)_n$ and $(y_n)_n$ asymptotically agree as operators, then so do the sequences of adjoints $({x_n}^*)_n$ and $({y_n}^*)_n$, and the sequences of partial inverses $({x_n}^\dagger)_n$ and $({y_n}^\dagger)_n$.
\end{lemma}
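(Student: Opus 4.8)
The plan is to reduce both assertions to estimates on von Neumann dimensions of kernels. Set $z_n = x_n - y_n$ and $\epsilon_n = \dim_{\caln G}\big(\ker(z_n)^\perp\big)$, so that $(x_n)_n \apa (y_n)_n$ means precisely $\sum_n \epsilon_n < \infty$; what must be shown is that the sequences $(x_n^* - y_n^*)_n$ and $(x_n^\dagger - y_n^\dagger)_n$ satisfy the corresponding summability.

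For the adjoints this is quick: $x_n^* - y_n^* = z_n^*$, one has $\ker(w^*)^\perp = \overline{\im w}$ for any affiliated operator $w$, and the polar decomposition $w = v p$ (with $v \in \caln G$ a partial isometry and $p \geqslant 0$, and $\ker v = \ker w$, $\overline{\im v} = \overline{\im w}$) gives $\dim_{\caln G}\overline{\im w} = \tr(vv^*) = \tr(v^*v) = \dim_{\caln G}(\ker w)^\perp$. Hence $\dim_{\caln G}\ker(x_n^* - y_n^*)^\perp = \epsilon_n$ and $(x_n^*)_n \apa (y_n^*)_n$.

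For the partial inverses it suffices to prove the pointwise estimate: if $a,b \in \calu G$ and $\epsilon = \dim_{\caln G}\ker(a-b)^\perp$, then $\dim_{\caln G}\ker(a^\dagger - b^\dagger)^\perp \leqslant 3\epsilon$, since applying this with $a = x_n$, $b = y_n$ yields $\sum_n \dim_{\caln G}\ker(x_n^\dagger - y_n^\dagger)^\perp \leqslant 3 \sum_n \epsilon_n < \infty$. To prove the estimate I would first note that, because $a$ and $b$ agree on $\ker(a-b)$, we have $\ker a \cap \ker(a-b) \subseteq \ker b$, so the modular inequality for $\dim_{\caln G}$ gives $\dim_{\caln G}\big((\ker a)^\perp \cap (\ker b)^\perp\big) \geqslant \dim_{\caln G}(\ker a)^\perp - \epsilon$; applying the same to $a^*, b^*$, whose difference has $\ker(a^* - b^*)^\perp = \overline{\im(a-b)}$ of dimension $\epsilon$ by the computation above, gives $\dim_{\caln G}\big((\overline{\im a})^\perp \cap (\overline{\im b})^\perp\big) \geqslant 1 - \dim_{\caln G}(\ker a)^\perp - \epsilon$. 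Now set $R = (\ker a)^\perp \cap \ker(a-b) \cap (\ker b)^\perp$ and $q = \pi_R \in \caln G$. Since $R \subseteq \ker(a-b)$ we have $aq = bq$, and since $R \subseteq (\ker a)^\perp \cap (\ker b)^\perp$ the identities $a^\dagger a = \pi_{(\ker a)^\perp}$ and $b^\dagger b = \pi_{(\ker b)^\perp}$ give $a^\dagger(aq) = q = b^\dagger(aq)$ in $\calu G$; multiplying on the right by $(aq)^\dagger$ shows $(a^\dagger - b^\dagger)\pi_{\overline{\im(aq)}} = 0$, where $\pi_{\overline{\im(aq)}} = (aq)(aq)^\dagger$. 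Moreover $\ker a^\dagger = (\im a)^\perp$ and $\ker b^\dagger = (\im b)^\perp$, so $a^\dagger - b^\dagger$ also kills the projection onto $(\overline{\im a})^\perp \cap (\overline{\im b})^\perp$; that subspace lies in $(\overline{\im a})^\perp$ and is therefore orthogonal to $\overline{\im(aq)} \subseteq \overline{\im a}$, so $\ker(a^\dagger - b^\dagger)$ contains their direct sum, of dimension $\dim_{\caln G}\overline{\im(aq)} + \dim_{\caln G}\big((\overline{\im a})^\perp \cap (\overline{\im b})^\perp\big)$. Finally $a$ is injective on $R$, hence $\ker(aq)^\perp = R$ and $\dim_{\caln G}\overline{\im(aq)} = \dim_{\caln G}R \geqslant \dim_{\caln G}(\ker a)^\perp - 2\epsilon$ (the modular inequality together with the kernel-closeness bound). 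Adding the two contributions gives that $\ker(a^\dagger - b^\dagger)$ has co-dimension at most $1 - \big(\dim_{\caln G}(\ker a)^\perp - 2\epsilon\big) - \big(1 - \dim_{\caln G}(\ker a)^\perp - \epsilon\big) = 3\epsilon$, as desired.

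I expect the real work to lie in this dimension bookkeeping rather than in anything conceptual: the bare modular inequality $\dim(A \cap B) \geqslant \dim A + \dim B - 1$ loses too much on its own, and one genuinely needs the structural fact that $a$ and $b$ coincide on $\ker(a-b)$ to deduce that $\ker a$ is close to $\ker b$ and, dually, $\overline{\im a}$ close to $\overline{\im b}$. A secondary point calling for care — and the reason I would phrase the vanishing statements as ring identities such as $a^\dagger(aq) = q = b^\dagger(aq)$ and $(a^\dagger - b^\dagger)(aq)(aq)^\dagger = 0$ in $\calu G$ — is that the composites $a^\dagger a$, $(aq)(aq)^\dagger$, and so on are only densely defined operators, so one should work with the algebra elements and the projections rather than chasing individual vectors through their domains.
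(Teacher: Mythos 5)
Your argument is correct and is essentially the paper's: the adjoint case is handled identically via $\dim_{\caln G}\overline{\im w}=\dim_{\caln G}(\ker w)^\perp$, and for the partial inverses you exhibit the same two orthogonal pieces of $\ker(a^\dagger-b^\dagger)$, namely $(\overline{\im a})^\perp\cap(\overline{\im b})^\perp$ and the closure of $a\big(\ker(a-b)\cap(\ker a)^\perp\cap(\ker b)^\perp\big)$, with the same dimension bookkeeping. The only differences are cosmetic — you phrase the vanishing via ring identities in $\calu G$ rather than pointwise, and your constant $3\epsilon$ is slightly sharper than the paper's $5d_n$, which is immaterial for summability.
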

\begin{proof}
	This is clear for adjoints, since $\ker ({x_n}^* - {y_n}^*) = \left( \im (x_n - y_n) \right) ^\perp$ has the same $\caln G$-dimension as $\ker (x_n - y_n)$.
	
	For partial inverses, we need to introduce some notation. Let
	\[
	1-d_n = \dim_{\caln G} \ker( x_n - y_n),
	\]
	and note that $\sum_n d_n < \infty$.
	
	The subspace $\ker ( {x_n}^\dagger - {y_n}^\dagger)$ contains $(\im x_n)^\perp \cap (\im y_n)^\perp$, since this is the intersection of the kernels of ${x_n}^\dagger$ and ${y_n}^\dagger$, and it contains
	\[\overline{x_n( \ker (x_n - y_n) \cap (\ker x_n)^\perp \cap (\ker y_n)^\perp )},\]
	 since this is the closure of a subspace on which ${x_n}^\dagger$ and ${y_n}^\dagger$ act as the identity -- here, we are using the fact that the action of $x_n$ and $y_n$ are the same on $\ker( x_n - y_n)$.
	  Observe that
	$ (\im x_n)^\perp \cap (\im y_n)^\perp$ and
	\[\overline{x_n( \ker (x_n - y_n) \cap (\ker x_n)^\perp \cap (\ker y_n)^\perp )}
	\]
	are perpendicular.
	 We will now bound the dimensions of these two spaces from below.
	
	We have
	\[\im x_n \geqslant x_n(\ker (x_n - y_n))  = y_n(\ker (x_n - y_n)) \leqslant \im y_n. \]
	Moreover, the definition of $d_n$ tells us that the $\caln G$-dimension of the perpendicular complement of  $x_n(\ker (x_n - y_n))$ in $\overline{\im x_n}$ is bounded above by $d_n$, and so is the complement in $\overline{\im y_n}$. We conclude that
	\begin{align*}
	\dim_{\caln G} \left( (\im x_n)^\perp \cap (\im y_n)^\perp \right) &\geqslant  \dim_{\caln G} \left( x_n(\ker (x_n - y_n)) \right)^\perp - 2d_n \\
	&\geqslant  \dim_{\caln G} (\im x_n)^\perp - 3d_n.
	\end{align*}
	
	We will now focus on $x_n( \ker (x_n - y_n) \cap (\ker x_n)^\perp \cap (\ker y_n)^\perp )$.
	We have
	\[	
	 (\ker x_n)^\perp \cap (\ker y_n)^\perp =  \im {x_n}^* \cap \im {y_n}^*
	\geqslant  {x_n}^*(\ker ({x_n}^* - {y_n}^*)) .
	\]
	The $\caln G$-codimension of this last subspace in $\im {x_n}^* = (\ker x_n)^\perp $ is bounded from above by $d_n$,
and therefore
	\begin{align*}
	&  \dim_{\caln G}  \overline{x_n( \ker (x_n - y_n) \cap (\ker x_n)^\perp \cap (\ker y_n)^\perp )} \\ &\geqslant 	\dim_{\caln G}  \overline{x_n( \ker (x_n - y_n) \cap (\ker x_n)^\perp )} -d_n\\
	&\geqslant 	\dim_{\caln G}  \overline{x_n( (\ker x_n)^\perp )} -2d_n\\
	&= \dim_{\caln G}  \overline{\im x_n} -2d_n.
	\end{align*}

	Combining the last two inequalities gives $\dim_{\caln G} \ker ( {x_n}^\dagger - {y_n}^\dagger) \geqslant 1 - 5d_n$, and hence $\dim_{\caln G}\ker(x_n^\dagger - y_n^\dagger)^\perp \leq 5d_n$, and the result follows.
\end{proof}

In particular, if $(x_n)_n$ stabilises, then so do $({x_n}^*)_n$ and $({x_n}^\dagger)_n$.

We will now introduce the first limit -- it will later allow us to view (equivalence classes of) our sequences as elements of $\calu G$.

\begin{lemma}
	\label{limit}
For every stabilising sequence $(x_n)_n$ over $\calu G$, there exists a unique element $x_\infty \in \calu G$ such that $(x_n)_n$ and $(x_\infty)_n$ asymptotically agree.
\end{lemma}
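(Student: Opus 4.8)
The plan is to treat uniqueness and existence separately. For \emph{uniqueness}, suppose $x_\infty, x_\infty' \in \calu G$ both asymptotically agree with $(x_n)_n$. Since $\apa$ is an equivalence relation, transitivity gives $(x_\infty)_n \apa (x_\infty')_n$; as this is a constant sequence, $\sum_n \dim_{\caln G}\ker(x_\infty - x_\infty')^\perp < \infty$ with all summands equal, forcing $\dim_{\caln G}\ker(x_\infty - x_\infty')^\perp = 0$. Because the only $G$-invariant subspace of $\ell^2 G$ of von Neumann dimension zero is the trivial one, the closed operator $x_\infty - x_\infty'$ vanishes on its dense domain and hence is $0$, so $x_\infty = x_\infty'$.

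For \emph{existence}, the idea is that the $x_n$ become eventually constant on ever-larger subspaces. For each $m$ set $V_m = \bigcap_{n \geq m}\ker(x_n - x_{n+1})$, a closed $G$-invariant subspace of $\ell^2 G$ with orthogonal projection $q_m \in \caln G$. The $V_m$ increase with $m$, and since $V_m^\perp = \bigvee_{n \geq m}\ker(x_n - x_{n+1})^\perp$ we get $\dim_{\caln G} V_m^\perp \leq \epsilon_m := \sum_{n \geq m}\dim_{\caln G}\ker(x_n - x_{n+1})^\perp$; as $(x_n)_n$ stabilises, $\epsilon_m \to 0$, so $q_m \to 1$ and $\bigcup_m V_m$ is dense in $\ell^2 G$. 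Moreover, for $m' \geq m$,
\[
(x_m - x_{m'})q_m = \sum_{n=m}^{m'-1}(x_n - x_{n+1})q_m = 0,
\]
since each $x_n - x_{n+1}$ with $m \leq n < m'$ kills $V_m \subseteq \ker(x_n - x_{n+1})$; hence $x_m q_m = x_{m'} q_m$, so the operators $x_m q_m$ form a compatible family along the increasing dense union $\bigcup_m V_m$.

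I would then define $x_\infty$ by gluing this family: concretely, $x_\infty$ is the closure of the densely defined $G$-operator determined by $x_\infty q_m = x_m q_m$ for every $m$ (equivalently, $x_\infty$ is assembled from the $x_m$ along the orthogonal decomposition of $\ell^2 G$ refining the $V_m$). The bulk of the work is checking that $x_\infty$ is a genuine element of $\calu G$, i.e.\ that the glued operator is densely defined (clear, as $\bigcup_m V_m$ is dense), closed, and $G$-equivariant; this is where I expect the main obstacle to lie, namely the bookkeeping of domains of unbounded operators, which I would control using that each $x_m$ is affiliated together with the Ore presentation $\calu G = (\caln G)S^{-1}$ of \cref{Ore for von Neumann}. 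Granting this, $x_\infty q_m = x_m q_m$ gives $(x_m - x_\infty)q_m = 0$, whence $\ker(x_m - x_\infty)^\perp \leq V_m^\perp$ and $\dim_{\caln G}\ker(x_m - x_\infty)^\perp \leq \epsilon_m \to 0$; a final accounting of the tail sums $\epsilon_m$, using the stabilising hypothesis, upgrades this to $\sum_m \dim_{\caln G}\ker(x_m - x_\infty)^\perp < \infty$, that is, $(x_n)_n \apa (x_\infty)_n$, as required.
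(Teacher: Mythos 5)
Your uniqueness argument and your choice of subspaces coincide with the paper's: your $V_m$ is exactly the orthogonal complement of the paper's $U_m = \overline{\sum_{n\geqslant m}\ker(x_{n+1}-x_n)^\perp}$, and the compatibility $x_m q_m = x_{m'}q_m$ is the same observation (and is cleanest phrased, as you do, as an identity in the ring $\calu G$, since $\im q_m \subseteq \ker(x_n - x_{n+1}) \subseteq \dom(x_n - x_{n+1})$). The genuine gap is precisely the step you defer: ``$x_\infty$ is the closure of the densely defined $G$-operator determined by $x_\infty q_m = x_m q_m$''. A densely defined unbounded operator need not be closable, so ``take the closure'' is not yet meaningful, and closedness of each individual $x_m$ does not transfer to the glued operator (a sequence $v_k \to 0$ with $x_\infty' v_k \to w$ may have $v_k \in V_{m_k}$ with $m_k \to \infty$, so no single closed $x_m$ controls it). The appeal to the Ore presentation $\calu G = (\caln G)S^{-1}$ does not resolve this, as there is no uniform denominator for the family $(x_m)_m$. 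The paper's fix is the adjoint trick: by \cref{adjoints and daggers} the sequence $(x_n^*)_n$ also stabilises, so the same gluing produces a densely defined operator $x_\infty^*$ which one checks is the adjoint of the glued $x_\infty'$; one then \emph{defines} $x_\infty \coloneqq (x_\infty^*)^*$, which is automatically closed and extends $x_\infty'$. This is the missing idea.

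A second point deserves care in your ``final accounting''. Your estimate gives $\dim_{\caln G}\ker(x_m - x_\infty)^\perp \leqslant \epsilon_m = \sum_{n\geqslant m} d_n$ with $d_n = \dim_{\caln G}\ker(x_n-x_{n+1})^\perp$, but the tails of a summable series are not in general summable ($\sum_m \epsilon_m = \sum_n (n+1) d_n$ can diverge, e.g.\ for $d_n \sim 1/(n\log^2 n)$), so this bound alone does not ``upgrade'' to $(x_n)_n \apa (x_\infty)_n$ in the sense of the definition. You would need either a sharper estimate on $\ker(x_m - x_\infty)$ than containment of $V_m$, or an additional argument; as written, the step does not close.
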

\begin{proof}
	Let $V_n = \ker (x_{n+1} - x_n)$. By assumption, $\sum_n (1-\dim_{\caln G} V_n) < \infty$. Let $U_n = \overline{\sum_{m\geqslant n} {V_m}^\perp}$. The subspaces $U_n$ are closed and $G$-invariant, and form a nested sequence with $\lim_n \dim_{\caln G} U_n = 0$. Therefore $\bigcap_n U_n = \{0\}$.
	
	Observe that the affiliated operators $x_m$ with $m \geqslant n$ all agree on $L_n$, where $L_n$ is defined to be ${U_n}^\perp$ intersected with all of their domains.
	In fact, $L_n$ is equal to ${U_n}^\perp$ intersected with the domain of $x_n$, since the domains are essentially dense and therefore their intersections with ${U_n}^\perp$ are dense therein, and since the operators are closed.
	Now, the subspaces $L_n$ are $G$-invariant, form an ascending chain, and $\overline{ L} = \ell^2 G$ with $L = \bigcup L_n$. We define $x'_\infty \colon L \to \ell^2 G$ by $x'_\infty\vert_{L_n} = x_n\vert_{L_n}$. It is clear that $x'_\infty$ is densely defined and $G$-equivariant.
	
	  We now apply the same procedure to the stabilising sequence $({x_n}^*)_n$, and obtain a densely defined $G$-invariant operator $x^*_\infty$. It is easy to see that $x^*_\infty$ is the adjoint of $x'_\infty$. Since $x'_\infty$ is densely defined, the adjoint $x_\infty  = ({x^*_\infty})^*$ is defined on a superspace of $L$, and on $L$ agrees with $x'_\infty$. Moreover, since $x^*_\infty$ is densely defined, $x_\infty$ is closed. Hence $x_\infty$ is the desired affiliated operator.
	
	  To prove uniqueness, suppose that we have another affiliated operator $y_\infty$ such that$(x_n)_n$ and $(y_\infty)_n$ asymptotically agree. Then the sequences $(x_\infty)_n$ and $(y_\infty)_n$ asymptotically agree, forcing $\ker (x_\infty - y_\infty) = \ell^2 G$. This means that $x_\infty = y_\infty$ as affiliated operators.
\end{proof}

We will refer to the element $x_\infty$ as the \emph{limit} of the sequence $(x_n)_n$.  The map $(x_n)_n \mapsto x_\infty$ will be denoted by $\lambda_{\calu G}$. Note that if $x_n \in \calu K$ for every $n$, then $x_\infty \in \calu K$ as well.

\subsection{Expansion}

We are now ready to construct the second function that will serve as an inverse, this time in $\calu K[t^{\pm 1} \rrbracket$.

\begin{defn}[Expansion]
	We define the \emph{expansion map}
	\[
	\calu K[t^{\pm 1}\rrbracket \smallsetminus \{0\} \to \calu K [ t^{\pm 1}\rrbracket
	\]
	 by
	\[
	q\mapsto \overline q =   (\init q)^\dagger \sum_{k=0}^\infty \left( ( \init q - q) (\init q)^\dagger \right)^k.
	\]
\end{defn}

In the unlikely event of the reader not recognising the construction immediately, it is instructive to consider the case in which $\init q$ is invertible in $\calu K$.

\begin{lemma}
	\label{ex for invertibles}
	For $q \in \calu K[t^{\pm 1}\rrbracket$, suppose that $\init q$ is invertible in $\calu K$. Then $\overline q$ is precisely the inverse of $q$ in $\calu K [ t^{\pm 1}\rrbracket$.
\end{lemma}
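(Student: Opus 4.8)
The plan is to reduce to the observation that when $\init q$ is invertible in $\calu K$, the partial inverse $(\init q)^\dagger$ coincides with a genuine two-sided inverse of $\init q$ inside $\calu K[t^{\pm 1}\rrbracket$, so that the expansion $\overline q$ becomes an honest Neumann series and the identities $q\overline q = 1 = \overline q q$ follow by telescoping.

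First I would set up notation. Write $\init q = t^k x_k$ with $x_k \in \calu K$ invertible; then $x_k^\dagger = x_k^{-1}$, so $(\init q)^\dagger = x_k^\dagger t^{-k} = x_k^{-1} t^{-k}$, and this is readily checked to be a two-sided inverse of $t^k x_k$ in $\calu K[t^{\pm 1}\rrbracket$. I will therefore write $(\init q)^{-1}$ for $(\init q)^\dagger$. Set $c = (\init q - q)(\init q)^{-1}$ and $c' = (\init q)^{-1}(\init q - q)$. Since $\init q - q$ is just $q$ with its initial term deleted, its associated power is strictly larger than that of $\init q$; as $(\init q)^{-1}$ is supported in the single degree $-k$ and conjugation by $t$ is an automorphism of $\calu K$ (so preserves $\calu K$), both $c$ and $c'$ have associated power $\geqslant 1$. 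Hence $c^m$ and $c'^m$ have associated power $\geqslant m$, so in each fixed degree only finitely many terms of $\sum_{m\geqslant 0}c^m$ and $\sum_{m\geqslant 0}c'^m$ contribute, and these series define elements of $\calu K[t^{\pm 1}\rrbracket$.

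Next I would run the telescoping identity in $\calu K[t^{\pm 1}\rrbracket$: $(1-c)\sum_{m\geqslant 0}c^m = \sum_{m\geqslant 0}c^m - \sum_{m\geqslant 1}c^m = 1$, and symmetrically $\bigl(\sum_{m\geqslant 0}c'^m\bigr)(1-c') = 1$, the degreewise finiteness above being exactly what legitimises these manipulations. Now $1-c = (\init q)(\init q)^{-1} - (\init q - q)(\init q)^{-1} = q(\init q)^{-1}$ and likewise $1-c' = (\init q)^{-1}q$, while an easy induction gives $(\init q)^{-1}c^m = c'^m(\init q)^{-1}$, so that $\overline q = (\init q)^{-1}\sum_{m\geqslant 0}c^m = \bigl(\sum_{m\geqslant 0}c'^m\bigr)(\init q)^{-1}$. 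Combining, $q\overline q = q(\init q)^{-1}\sum_{m\geqslant 0}c^m = (1-c)\sum_{m\geqslant 0}c^m = 1$ and $\overline q q = \bigl(\sum_{m\geqslant 0}c'^m\bigr)(\init q)^{-1}q = \bigl(\sum_{m\geqslant 0}c'^m\bigr)(1-c') = 1$, whence $\overline q = q^{-1}$.

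The only mildly technical point, and it is entirely routine, is the bookkeeping justifying the infinite sums and the telescoping inside $\calu K[t^{\pm 1}\rrbracket$: one needs that right multiplication by $(\init q)^{-1}$ shifts associated powers as claimed, and that the power-series ring is complete for the evident degree filtration so that the geometric series converges and $1-c$, $1-c'$ are invertible. Everything else is formal. I expect no real obstacle; the substance of the \emph{expansion} construction lies entirely in the case where $\init q$ fails to be invertible, handled separately later.
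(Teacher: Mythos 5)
Your proof is correct and follows essentially the same route as the paper: both arguments reduce to the observation that $(\init q)^\dagger$ is a genuine inverse of $\init q$, write $q(\init q)^\dagger = 1 - c$ with $c$ of strictly positive associated power, and invert via the telescoping geometric series, which is exactly the expansion $\overline q$ by definition. You are merely more explicit about the degreewise convergence and about checking the left inverse separately via $c'$, which the paper leaves implicit.
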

\begin{proof}
	Observe that $({{\init q}})^\dagger$ is the inverse of ${\init q}$.
	Let $r = q(\init q)^\dagger$, and observe that we have $r = 1 + \sum_{i>0} t^i r_i$ with $r_i \in \calu K$. Hence
	\[
	\overline r = \sum_{k \geqslant 0} (1-r)^k
	\]
	is the inverse of $r$. Thus, $(\init q)^\dagger \sum_{k \geqslant 0} (1-r)^k$ is the inverse of $q$. But
	\[
	 (\init q)^\dagger \sum_{k \geqslant 0} (1-r)^k =  (\init q)^\dagger \sum_{k \geqslant 0} ( \init q   (\init q)^\dagger -q (\init q)^\dagger)^k = \overline q. \qedhere
	\]
\end{proof}

\begin{lemma}
	\label{ex is an inverse}
	Let $(q_n)_n$ and $(r_n)_n$ be asymptotically injective sequences over $\calu K[t^{\pm1} \rrbracket$. All of the following hold:
	\begin{enumerate}
		\item If $(q_n)_n \apa_K (r_n)_n$ then  $(\overline{q_n})_n \apa_K (\overline{r_n})_n$;
				\item $(\overline{\overline{q_n}})_n \apa_K (q_n)_n$;
		\item $(q_n \overline{q_n})_n \apa_K (\overline{q_n}q_n)_n \apa_K (1)_n$;
		\item $(\overline{ q_n r_n})_n \apa_K (\overline{r_n} \cdot \overline{q_n} )_n$.
	\end{enumerate}
\end{lemma}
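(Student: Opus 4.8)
The plan is to treat part~(3) as the technical core and deduce parts~(1), (2), and~(4) from it by formal manipulation with the arithmetic of $\apa_K$ recorded in \cref{apaK}; note that by asymptotic injectivity $q_n\neq 0$ for all large $n$, so $\overline{q_n}$ is defined for all but finitely many $n$, which is all that matters for $\apa_K$.

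For part~(3) I would first record the two identities
\[
\overline q = (\init q)^\dagger(1-u)^{-1} = (1-u')^{-1}(\init q)^\dagger,
\qquad u=(\init q-q)(\init q)^\dagger,\quad u'=(\init q)^\dagger(\init q-q),
\]
where $u$ and $u'$ are supported in strictly positive degrees (since $\init q-q$ is supported above the associated power of $q$ while $(\init q)^\dagger$ lies in the single degree equal to minus that power), so that $(1-u)^{-1}=\sum_{k\geq0}u^k$ and $(1-u')^{-1}=\sum_{k\geq0}(u')^k$ are honest elements of $\calu K[t^{\pm1}\rrbracket$. Writing $q(\init q)^\dagger=\init q(\init q)^\dagger-u=E-u$ with $E=\init q(\init q)^\dagger=\pi_{\overline{\im\init q}}$, and $(\init q)^\dagger q=(\init q)^\dagger\init q-u'=F-u'$ with $F=(\init q)^\dagger\init q=\pi_{(\ker\init q)^\perp}$, a short computation gives
\[
q\overline q-1=(E-1)(1-u)^{-1}, \qquad \overline q q-1=(1-u')^{-1}(F-1).
\]
Now $E-1=-\pi_{(\overline{\im\init q})^\perp}$ and $F-1=-\pi_{\ker\init q}$ both have kernels of $\caln G$-dimension $1-\dim_{\caln G}\ker\init q=1-\nul q$ (using $\ker\init q=\ker(\text{pure part of }\init q)$ and \cref{dim equality}). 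Each coefficient of $q\overline q-1$ is a conjugate of $E-1$ right-multiplied by a coefficient of $(1-u)^{-1}$, and $\dim_{\caln G}\ker(AB)\geq\dim_{\caln G}\ker A$ because $\overline{\im(AB)}\subseteq\overline{\im A}$; symmetrically, each coefficient of $\overline q q-1$ is a coefficient of $(1-u')^{-1}$ left-multiplied by $F-1$, and $\dim_{\caln G}\ker(AB)\geq\dim_{\caln G}\ker B$. Hence every coefficient of $q_n\overline{q_n}-1$ and of $\overline{q_n}q_n-1$ has kernel of $\caln G$-dimension at least $1-\nul q_n$; since $\sum_n\nul q_n<\infty$, this says exactly that $(q_n\overline{q_n})_n\apa_K(1)_n\apa_K(\overline{q_n}q_n)_n$, which is part~(3).

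Before bootstrapping I would check that the sequences to which part~(3) gets re-applied are themselves asymptotically injective. From $\init\overline q=(\init q)^\dagger$ the pure part of $\init\overline q$ is a conjugate of $x_0^\dagger$, where $x_0$ is the pure part of $\init q$; since $\dim\ker x_0^\dagger=\dim(\im x_0)^\perp=\dim\ker x_0$, we get $\nul\overline{q_n}=\nul q_n$, so $\sum_n\nul\overline{q_n}<\infty$ --- the one point needing care is admissibility of $(\overline{q_n})_n$, i.e.\ that the associated powers of $(q_n)_n$ are bounded above, which holds in the settings where part~(2) is invoked (e.g.\ the sequences built in \cref{approx Ore} and \cref{common multi} have uniformly bounded degrees) and would be folded into the standing hypotheses or flagged explicitly. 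Likewise the product of the pure parts of $\init q_n$ and $\init r_n$ has nullity at most $\nul q_n+\nul r_n<1$ for large $n$, hence is non-zero and equals the pure part of $\init(q_nr_n)$, so $\nul(q_nr_n)\leq\nul q_n+\nul r_n$ eventually and $(q_nr_n)_n$ is asymptotically injective. Parts~(1), (2), (4) then follow by short $\apa_K$-chains via transitivity and \cref{apaK}: for~(4), $\overline{qr}\apa_K\overline{qr}\,(qr)(\overline r\,\overline q)\apa_K\overline r\,\overline q$, using $\overline{qr}(qr)\apa_K 1$ (part~(3) for $(q_nr_n)_n$) and $(qr)(\overline r\,\overline q)=q(r\overline r)\overline q\apa_K q\overline q\apa_K 1$ (part~(3) twice); for~(1), with $q_n\apa_K r_n$, $\overline q\apa_K\overline q(r\overline r)=(\overline q r)\overline r\apa_K(\overline q q)\overline r\apa_K\overline r$; and for~(2), $\overline{\overline q}\apa_K(q\overline q)\overline{\overline q}=q(\overline q\,\overline{\overline q})\apa_K q$, the middle agreement $\overline q\,\overline{\overline q}\apa_K 1$ being part~(3) applied to $(\overline{q_n})_n$.

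The real work is concentrated in part~(3): extracting the identities $q\overline q-1=(E-1)(1-u)^{-1}$ and its mirror cleanly inside the twisted power-series ring while keeping track of how conjugation by powers of $t$ permutes coefficients, and assembling the correct von Neumann--dimension estimates for kernels of products and conjugates. The only secondary subtlety is checking that the derived sequences $(\overline{q_n})_n$ and $(q_nr_n)_n$ remain admissible and asymptotically injective, so that part~(3) may legitimately be reused. (One could instead prove part~(4) directly from the anti-multiplicativity of $\dagger$ up to $\apa$ in \cref{agreement}, but routing everything through part~(3) seems the most economical.)
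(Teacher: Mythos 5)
Your argument is correct, but it is organised quite differently from the paper's. The paper proves item~(1) first (for each fixed degree $d$ the $t^d$-coefficient of $\overline{q_n}$ is built from finitely many coefficients of $q_n$ by fixed arithmetic operations together with one application of $\dagger$, so \cref{apa} and \cref{adjoints and daggers} apply degree by degree), and then deduces (2)--(4) by perturbation: polar decomposition supplies partial isometries $u_n$ supported on $\ker \init q_n$ making $\init(q_n+u_n)$ invertible, \cref{ex for invertibles} gives the \emph{exact} identities $\overline{\overline{q_n+u_n}}=q_n+u_n$, $(q_n+u_n)\overline{(q_n+u_n)}=1$ and $\overline{(q_n+u_n)(r_n+v_n)}=\overline{r_n+v_n}\cdot\overline{q_n+u_n}$, and since $(u_n)_n\apa_K(0)_n$ item~(1) transfers these back. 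You instead make (3) the base case, proving it via the closed-form identities $q\overline q-1=(E-1)(1-u)^{-1}$ and $\overline q q-1=(1-u')^{-1}(F-1)$, where $E-1$ and $F-1$ are, up to sign, projections of trace $\nul q$, followed by the kernel-dimension estimate for products; you then derive (1), (2), (4) by the standard ``uniqueness of inverses up to $\apa_K$'' manipulations. Both routes work. Yours trades the paper's soft perturbation for an explicit computation, and has the mild advantage that your proof of (3) never uses admissibility, only $\sum_n\nul q_n<\infty$; this in fact dissolves the one worry you flag, since the same computation applies verbatim to $(\overline{q_n})_n$ (whose nullities equal those of $(q_n)_n$, as $\init\overline{q}=(\init q)^\dagger$ --- the observation the paper records in \cref{obvious inverse}) even when the associated powers of $q_n$ are unbounded above, so part~(2) needs no added hypothesis. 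The paper's blanket normalisation ``assume $\init q_n\in\calu K$'' quietly absorbs the same point. One cosmetic slip: the degree-$d$ coefficient of $(1-u')^{-1}(F-1)$ is $w'_d(F-1)$, i.e.\ $F-1$ multiplied on the left by a coefficient of $(1-u')^{-1}$, not the other way round; the bound you then invoke, $\dim_{\caln G}\ker(AB)\geq\dim_{\caln G}\ker B$, is the correct one for this order, so nothing breaks.
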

\begin{proof}
	It is easy to see what happens when we multiply the terms of our sequences by powers of $t$, and hence we will assume that $\init q_n$ and $\init r_n$ lie in $\calu K$ for all $n$.
	
	\begin{enumerate}
		\item For every fixed $d$, the terms of $\overline{q_n}$ and $\overline{r_{n}}$ appearing next to $t^d$ are obtained from finitely many corresponding terms in $q_n$ and $r_{n}$ via the same arithmetic operation. We now apply \cref{apa} for every degree $d$ separately.
		\item Recall that the polar decomposition gives us partial isometries $u_n$ and $v_n$ in $\caln K$ mapping $\ker \init q_n$ onto $(\im \init q_n)^\perp$ and $\ker \init r_n$ onto $(\im \init r_n)^\perp$ and being trivial on $(\ker \init q_n)^\perp$ and $(\ker \init r_n)^\perp$, respectively. The operators $\init q_n + u_n$ and $\init r_n + v_n$ are then invertible in $\calu K$.
		Crucially, $(u_n)_n \apa_K (0)_n \apa_K (v_n )_n$, since $(q_n)_n$ and $(r_n)$ are asymptotically injective.  \cref{apaK} tells us that \[(q_n + u_n)_n \apa_K (q_n)_n \quad \textrm{and} \quad (r_n + v_n)_n \apa_K (r_n)_n.\]
		Now, \cref{ex for invertibles} yields $\overline{\overline{q_n + u_n}} = q_n + u_n$ for all $n$, and combining this with the previous item finishes this part of the proof.
		\item	
		By \cref{ex for invertibles}, for all $n$ we have
		\[
		(q_n + u_n)\overline{(q_n + u_n)} = \overline{(q_n + u_n)}(q_n + u_n) = 1.
		\]
		We are now done thanks to the first item and \cref{apaK}.
		\item Finally, by \cref{ex for invertibles} and uniqueness of inverses
		we have
		\[
		\overline{(q_n + u_n)(r_n + v_n)} = \overline{r_n + v_n}\cdot \overline{q_n + u_n}.
		\]
By \cref{apaK}, $(q_nr_n)_n \apa_K  (q_n + u_n)(r_n + v_n)_n$, and we finish the proof using the first item. \qedhere
	\end{enumerate}	
\end{proof}

	
\subsection{Weakly rational elements}
	
\begin{defn}
	Let $\prewrat (K,t)$ be the set of sequences $(p_n,q_n)_n$ such that all of the following hold:
	\begin{itemize}
		\item $(p_n)_n$ is an admissible sequence in $\calu K[t^{\pm 1}]$,
		\item $(q_n)_n$ is an asymptotically injective sequence in $\calu K [t^{\pm 1}]$,
		\item the sequence $(p_n (q_n)^\dagger)_n$ stabilises,
		\item the sequence $(p_n \overline{q_n})_n$ $K$-stabilises.
	\end{itemize}

We let $\sim$ be a relation on $\prewrat (K,t)$ given by $(p_n,q_n)_n \sim (p'_n,q'_n)_n$ if there exist asymptotically injective sequences $(r_n)_n$ and $(s_n)_n$ in $\calu K[t^{\pm 1}]$ such that
\[
p_n r_n = p'_ns_n, \qquad q_nr_n = q'_ns_n
\]
for all $n$.
\end{defn}

\begin{lemma}
	The relation $\sim$ is an equivalence relation.
\end{lemma}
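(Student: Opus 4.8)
The plan is to verify the three defining properties in turn; reflexivity and symmetry are formalities, and the only genuine content is transitivity, which is where \Cref{common multi} enters.

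For reflexivity, given $(p_n,q_n)_n \in \prewrat(K,t)$ I would take the witnessing sequences to be the constant sequence $(r_n)_n = (s_n)_n = (1)_n$. This sequence is asymptotically injective: its associated powers are all $0$, so it is admissible, and $\nul 1 = \dim_{\caln K}\ker 1 = 0$, so the nullities sum to $0$. The required identities $p_n\cdot 1 = p_n\cdot 1$ and $q_n\cdot 1 = q_n\cdot 1$ are trivial. For symmetry, if asymptotically injective $(r_n)_n,(s_n)_n$ witness $(p_n,q_n)_n\sim(p'_n,q'_n)_n$, then reading the defining equations backwards shows that $(s_n)_n,(r_n)_n$ witness $(p'_n,q'_n)_n\sim(p_n,q_n)_n$.

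For transitivity, suppose $(p_n,q_n)_n \sim (p'_n,q'_n)_n$ is witnessed by asymptotically injective $(r_n)_n,(s_n)_n$ with $p_nr_n=p'_ns_n$ and $q_nr_n=q'_ns_n$, and that $(p'_n,q'_n)_n \sim (p''_n,q''_n)_n$ is witnessed by asymptotically injective $(r'_n)_n,(s'_n)_n$ with $p'_nr'_n=p''_ns'_n$ and $q'_nr'_n=q''_ns'_n$. The idea is to "compose" the two witnesses by inserting an asymptotically injective common multiple of $(s_n)_n$ and $(r'_n)_n$: by \Cref{common multi} there exist asymptotically injective sequences $(x_n)_n$ and $(y_n)_n$ over $\caln K[t^{\pm 1}]$ with $s_nx_n = r'_ny_n$ for every $n$. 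I would then set $a_n = r_nx_n$ and $b_n = s'_ny_n$. The sequences $(a_n)_n$ and $(b_n)_n$ are again asymptotically injective, since term-wise products of asymptotically injective sequences are (the remark following the definition), and they lie in $\calu K[t^{\pm 1}]$. Finally the two chains of equalities
\[
p_na_n = p_nr_nx_n = p'_ns_nx_n = p'_nr'_ny_n = p''_ns'_ny_n = p''_nb_n, \qquad q_na_n = q_nr_nx_n = q'_ns_nx_n = q'_nr'_ny_n = q''_ns'_ny_n = q''_nb_n
\]
show that $(a_n)_n,(b_n)_n$ witness $(p_n,q_n)_n\sim(p''_n,q''_n)_n$.

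The only step that is not pure bookkeeping is the appeal to \Cref{common multi}; it plays exactly the role that an exact Ore condition for $\calu K[t^{\pm1}]$ would play if it were available (which, as explained in the introduction, it is not), and it is what makes the composition of witnesses possible while keeping the error terms summable. Everything else reduces to manipulating the defining equations.
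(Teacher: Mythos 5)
Your proof is correct and follows essentially the same route as the paper: reflexivity via the constant sequence $(1)_n$, symmetry by swapping witnesses, and transitivity by taking an asymptotically injective common multiple of $(s_n)_n$ and $(r'_n)_n$ via \Cref{common multi} and composing the witnesses, using the fact that term-wise products of asymptotically injective sequences are asymptotically injective. No gaps.
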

\begin{proof}
	Reflexivity is clear by taking $r_n = 1 = s_n$ for every $n$. Symmetry is also clear by exchanging the sequences  $(r_n)_n$ and $(s_n)_n$. Transitivity follows from \cref{common multi}. Here are the details.
		
	Consider three sequences $(p_n,q_n)_n, (p'_n,q'_n)_n$ and $(p''_n,q''_n)_n$ in $\prewrat (K,t)$, and suppose that $(p_n,q_n)_n \sim (p'_n,q'_n)_n$ and $(p'_n,q'_n)_n \sim (p''_n,q''_n)_n$. The definition gives us asymptotically injective sequences $(r_n)_n, (s_n)_n, (r'_n)_n$, and $(s'_n)_n$ such that
\begin{align*}
p_n r_n = p'_ns_n, \hspace{5mm} & q_nr_n = q'_ns_n, \\
 p'_n r'_n = p''_ns'_n, \hspace{5mm} & q'_nr'_n = q''_ns'_n,
\end{align*}
for all $n$.
By \cref{common multi}, there exists an asymptotically injective  right common multiple $(x_n)_n$ of $(s_n)_n$ and $(r'_n)_n$.
Let $y_n$ and $z_n$ be such that $x_n = s_n y_n = r'_n z_n$. Set $r''_n = r_n y_n$ and $s''_n = s'_n z_n$, and note that $(r''_n)_n$ is asymptotically injective. Then
\[
p_n r''_n = p_n r_n y_n = p'_n s_n y_n = p'_n x_n = p'_n r'_n z_n = p''_n s'_n z_n = p''_n s''_n. \qedhere
\]
\end{proof}

Let $\wrat (K,t)$ denote the set of equivalence classes in $\prewrat (K,t)$ under this equivalence relation. We will abuse notation by not differentiating between the elements of $\prewrat (K,t)$ and the equivalence classes they lie in. The elements of $\wrat (K,t)$ will be called \emph{weakly rational sequences}.

\begin{lemma}
	\label{homothety}
	For every $(p_n,q_n)_n \in \prewrat (K,t)$ and every asymptotically injective sequence $(x_n)_n$ in $\calu K[t^{\pm 1}]$, the sequence $(p_nx_n,q_nx_n)_n$ lies in $\prewrat (K,t)$ in the same equivalence class as $(p_n,q_n)_n$.
\end{lemma}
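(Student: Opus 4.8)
The plan is to verify, one bullet at a time, that $(p_nx_n, q_nx_n)_n$ satisfies the four conditions in the definition of $\prewrat(K,t)$, and then to exhibit the witnessing sequences showing $(p_nx_n,q_nx_n)_n \sim (p_n,q_n)_n$.

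First I would check the two easy bullets. Since $(x_n)_n$ is asymptotically injective, it is in particular admissible, and a product of an admissible sequence with an asymptotically injective one is admissible (the associated powers add, and boundedness from below is preserved), so $(p_nx_n)_n$ is admissible. Likewise $(q_nx_n)_n$ is a term-wise product of asymptotically injective sequences, hence asymptotically injective by the remark following the definition of asymptotically injective sequences.

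Next, the two stabilisation conditions. For $(p_nx_n(q_nx_n)^\dagger)_n$: by \cref{agreement}\eqref{agreement 1} we have $\bigl((q_nx_n)^\dagger\bigr)_n \apa \bigl(x_n^\dagger q_n^\dagger\bigr)_n$, so by \cref{apa}\eqref{apa 3} $(p_n x_n (q_nx_n)^\dagger)_n \apa (p_n x_n x_n^\dagger q_n^\dagger)_n$, and by \cref{agreement}\eqref{agreement 2}, $(x_nx_n^\dagger)_n \apa (1)_n$, so applying \cref{apa}\eqref{apa 2} and \eqref{apa 3} again gives $(p_nx_n(q_nx_n)^\dagger)_n \apa (p_nq_n^\dagger)_n$. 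Since asymptotic agreement is an equivalence relation and $(p_nq_n^\dagger)_n$ stabilises by hypothesis, $(p_nx_n(q_nx_n)^\dagger)_n$ stabilises as well. For the $K$-stabilising condition on $(p_nx_n\overline{q_nx_n})_n$, I would run the identical argument with $\overline{\ \cdot\ }$ in place of $\dagger$ and $\apa_K$ in place of $\apa$, using \cref{ex is an inverse}\eqref{ex is an inverse 4} in place of \cref{agreement}\eqref{agreement 1} (to get $(\overline{q_nx_n})_n \apa_K (\overline{x_n}\cdot\overline{q_n})_n$), \cref{ex is an inverse}\eqref{ex is an inverse 3} in place of \cref{agreement}\eqref{agreement 2} (to get $(x_n\overline{x_n})_n \apa_K (1)_n$), and \cref{apaK} in place of \cref{apa}; this yields $(p_nx_n\overline{q_nx_n})_n \apa_K (p_n\overline{q_n})_n$, which $K$-stabilises by hypothesis. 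Hence $(p_nx_n,q_nx_n)_n \in \prewrat(K,t)$.

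Finally, to see that the new element lies in the same equivalence class as $(p_n,q_n)_n$, I would apply the definition of $\sim$ with the witnessing sequences $(r_n)_n = (x_n)_n$ and $(s_n)_n = (1)_n$: both are asymptotically injective (the constant sequence $(1)_n$ has all nullities zero and associated powers zero, hence is admissible with $\sum_n \nul 1 = 0 < \infty$), and the required equalities $p_n x_n = (p_nx_n)\cdot 1$ and $q_n x_n = (q_nx_n)\cdot 1$ hold trivially for all $n$. I do not anticipate a genuine obstacle here; the only mild subtlety is bookkeeping the reduction "assume $\init q_n$ and $\init x_n$ lie in $\calu K$", i.e. that multiplying terms by powers of $t$ does not affect any of the relevant notions — but this is exactly the normalisation already used in the proofs of \cref{agreement} and \cref{ex is an inverse}, so it can be invoked verbatim.
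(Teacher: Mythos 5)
Your proposal is correct and follows essentially the same route as the paper: the paper's proof also dismisses the first two conditions as trivial, derives stability of $(p_nx_n(q_nx_n)^\dagger)_n$ from \cref{agreement} (you re-derive item \eqref{agreement 3} from items \eqref{agreement 1} and \eqref{agreement 2}, which is exactly how that item is proved), handles the $K$-stabilising condition via \cref{ex is an inverse} and \cref{apaK}, and the witnessing pair $(r_n)_n=(x_n)_n$, $(s_n)_n=(1)_n$ is the intended one.
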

\begin{proof}
	The only conditions that are non-trivial to check are the stability of $(p_n x_n (q_n x_n)^\dagger)_n$ and $(p_n x_n \overline{q_n x_n})_n$. For the first sequence, this follows immediately from \cref{agreement}\eqref{agreement 3}; for the second, we use \cref{ex is an inverse,apaK}.
\end{proof}

\begin{prop}
	\label{ring struct}
	The map $\iota \colon p \mapsto (p,1)_n$ embeds $\calu K[t^{\pm 1}]$ into $\wrat(K,t)$, and $\wrat(K,t)$ admits a ring structure making this embedding into a ring homomorphism.
\end{prop}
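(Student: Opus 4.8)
The plan is to build $\wrat(K,t)$ as an \emph{approximate right Ore localisation} of $\calu K[t^{\pm 1}]$: the rôle of the Ore condition is played by \cref{common multi} (hence ultimately by \cref{approx Ore}), while \cref{agreement}, \cref{ex is an inverse}, \cref{apa} and \cref{apaK} supply the approximate arithmetic of the partial inverse $\dagger$ and of the expansion map. Throughout, a pair $(p_n,q_n)_n$ is read as the right fraction ``$p_nq_n^{-1}$''. To add $[(p_n,q_n)_n]$ and $[(p'_n,q'_n)_n]$, apply \cref{common multi} to the asymptotically injective sequences $(q_n)_n,(q'_n)_n$ to get an asymptotically injective $(x_n)_n$ with $x_n=q_ny_n=q'_nz_n$ for asymptotically injective $(y_n)_n,(z_n)_n$, and declare the sum to be $[(p_ny_n+p'_nz_n,\,x_n)_n]$. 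To multiply, apply the approximate Ore condition (in the $\calu K$-form of \cref{caln to calu}) to the pair $(p'_n,q_n)$ with $\epsilon=2^{-n}$, obtaining $e_n,f_n\in\calu K[t^{\pm 1}]$ with $p'_ne_n=q_nf_n$, $\nul e_n<\nul q_n+2^{-n}$ (so $(e_n)_n$ is asymptotically injective) and $\nul f_n<\nul p'_n+\nul q_n+2^{-n}$, and declare the product to be $[(p_nf_n,\,q'_ne_n)_n]$. Finally $\iota(p)\coloneqq(p,1)_n$; since $\overline 1=1$, this constant sequence lies in $\prewrat(K,t)$.

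First one checks these outputs again lie in $\prewrat(K,t)$. The denominators $q_ny_n$ and $q'_ne_n$ are asymptotically injective as term-wise products of such, and admissibility of the numerators is clear from the behaviour of associated powers of initial terms under products and sums. For the stabilisation conditions of the sum, \cref{agreement}\eqref{agreement 3} gives $(p_ny_nx_n^\dagger)_n=(p_ny_n(q_ny_n)^\dagger)_n\apa(p_nq_n^\dagger)_n$, and likewise for the second summand, so $\big((p_ny_n+p'_nz_n)x_n^\dagger\big)_n$ is a sum of stabilising sequences, hence stabilises by \cref{apa}\eqref{apa 1}; the $K$-stabilisation of $\big((p_ny_n+p'_nz_n)\overline{x_n}\big)_n$ follows in the same way from \cref{ex is an inverse} and \cref{apaK}. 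For the product, \cref{agreement}\eqref{agreement 1} and \eqref{agreement 2}, combined with $(q_n^\dagger q_n)_n\apa(1)_n$ (\cref{Linnell cor}), give $\big(p_nf_n(q'_ne_n)^\dagger\big)_n\apa(p_nf_ne_n^\dagger (q'_n)^\dagger)_n\apa(p_nq_n^\dagger\,p'_n(q'_n)^\dagger)_n$, a product of stabilising sequences; the $K$-stabilisation is handled analogously using \cref{ex is an inverse} and \cref{apaK}.

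Next, the operations are independent of the auxiliary data $(x_n,y_n,z_n)$, resp.\ $(e_n,f_n)$, and of the chosen $\sim$-representatives; this is the classical Ore argument — given two competing choices, pass to a common refinement by \cref{common multi} and identify the resulting fractions by \cref{homothety} — with a single caveat forced by the approximate setting: an identity $c_nw_n=c_nw'_n$ with $(c_n)_n$ asymptotically injective yields only $(w_n)_n\apa(w'_n)_n$, since $\overline{\im(w_n-w'_n)}$ is contained in $\ker c_n$, whose $\caln G$-dimension is at most $\nul c_n$ by \cref{Linnell cor}. Consequently every cancellation is performed up to $\apa$ or $\apa_K$, and the nullity bounds of \cref{approx Ore} / \cref{common multi} make the error series converge. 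With the operations well-defined on $\wrat(K,t)$, the ring axioms — associativity, distributivity, unit $[(1,1)_n]$, zero $[(0,1)_n]$, negative $[(-p_n,q_n)_n]$ — reduce, via \cref{homothety} and repeated use of \cref{common multi}, to computations with fractions over a common denominator, where they are formal. That $\iota$ respects the operations is then immediate: in the sum take $x_n=y_n=z_n=1$, and in the product $e_n=1$, $f_n=p'$, yielding $(p+p',1)_n$ and $(pp',1)_n$, while $\iota(1)=(1,1)_n$.

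It remains to see $\iota$ is injective. If $(p,1)_n\sim(0,1)_n$, then by definition of $\sim$ there is an asymptotically injective $(r_n)_n$ with $pr_n=0$, so $\overline{\im r_n}\leqslant\ker p$. If $p\neq 0$, the pure part of $\init p$ is a non-zero element of $\calu K$, whose kernel has $\caln K$-dimension strictly less than $1$; hence $\nul p<1$, and $\dim_{\caln G}\ker p\leqslant\nul p$ by \cref{Linnell cor}. Therefore $\dim_{\caln G}\ker r_n=1-\dim_{\caln G}\overline{\im r_n}\geqslant 1-\nul p>0$ for every $n$, contradicting $\dim_{\caln G}\ker r_n\leqslant\nul r_n\to 0$ (again \cref{Linnell cor}); so $p=0$ and $\iota$ is injective. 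The genuinely delicate part of the argument is the well-definedness and the ring axioms: because the approximate Ore condition is not an exact identity, every step that would be an equality in the construction of a genuine Ore localisation becomes here only an asymptotic agreement, so one must propagate the nullity estimates carefully in order to keep all the relevant series summable.
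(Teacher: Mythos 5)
Your overall architecture matches the paper's: the same $\iota$, addition via a common denominator obtained from \cref{common multi}, multiplication via \cref{approx Ore} applied to $(p'_n,q_n)$, membership of the outputs in $\prewrat(K,t)$ checked through \cref{agreement}, \cref{ex is an inverse}, \cref{apa} and \cref{apaK}, and an injectivity argument that is a valid (slightly different) route through \cref{Linnell cor}.

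The gap is in well-definedness. You observe that a cancellation $c_nw_n=c_nw'_n$ with $(c_n)_n$ asymptotically injective yields only $(w_n)_n\apa(w'_n)_n$, and then assert that ``every cancellation is performed up to $\apa$ or $\apa_K$'' and that the nullity estimates take care of the rest. But $\wrat(K,t)$ is the quotient of $\prewrat(K,t)$ by $\sim$, and $\sim$ demands \emph{exact} identities $p_nr_n=p'_ns_n$ and $q_nr_n=q'_ns_n$ with $(r_n)_n,(s_n)_n$ asymptotically injective sequences in $\calu K[t^{\pm1}]$. Nowhere is it shown -- and it is not clear -- that componentwise asymptotic agreement of two elements of $\prewrat(K,t)$ implies $\sim$-equivalence; to deduce that, one would have to realise a projection onto a subspace of $\ker(w_n-w'_n)$ of dimension close to $1$ by an asymptotically injective sequence of twisted Laurent polynomials, which is not available. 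So your argument shows that two candidate sums (or products) asymptotically agree as operators, not that they represent the same element of $\wrat(K,t)$. The paper closes exactly this hole with a further application of the approximate Ore condition: from $q_ny_n=q_ny'_n$ it produces an asymptotically injective $(a_n)_n$ and a sequence $(b_n)_n$ with $(y_n-y'_n)a_n=\pi_{(\ker\init q_n)^\perp}b_n$, and then an initial-term computation shows that $\pi_{(\ker\init q_n)^\perp}b_n=0$, so that $y_na_n=y'_na_n$ holds on the nose and the $\sim$-equivalence follows via \cref{homothety}. You need this upgrade from $\apa$ to exact equality (for both addition and multiplication) before the operations are known to descend to $\wrat(K,t)$.
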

\begin{proof}
	We will break the proof into three parts.

	\begin{markproof}[Embedding]
 We start with the first claim. Suppose that $(p,1)_n \sim (p',1)_n$. The definition gives us an asymptotically injective sequence $(r_n)_n$ with $pr_n = p'r_n$.
	Let $x = \init (p-p')$. We then have
	\[
	x \init r_{n} = 0
	\]
	for all $n$. Since $\lim_n \nul r_n =0$, the images of the operators $\init r_{n}$ have closures with dimensions tending to $1$. Hence $x = 0$, and so $p = p'$.\end{markproof}
	
	\begin{markproof}[Ring structure: addition]
	Now we need to define the ring structure. We start with addition. Let $(p_n,q_n)_n$ and $(p'_n, q'_n)_n$ in $\wrat(K,t)$ be given. We are first going to bring them to a common denominator: \cref{common multi} gives us asymptotically injective sequences $(x_n)_n, (y_n)_n$, and $(z_n)_n$ with $x_n = q_ny_n = q'_n z_n$ for every $n$. \cref{homothety} tells us that $(p_n y_n, x_n)_n$ and $(p'_n z_n, x_n)_n$ lie in $\wrat(K,t)$; we define their sum to be $(p_ny_n + p'_nz_n,x_n)_n$. We now need to check that this sequence lies in $\wrat(K,t)$, and that the sum is independent of the choices of representatives.
	
To check that $(p_ny_n + p'_nz_n,x_n)_n$ lies in $\wrat(K,t)$, we need to verify the stability conditions of the definition. This is easy, since
\[
(p_ny_n + p'_nz_n){x_n}^\dagger = p_ny_n{x_n}^\dagger + p'_nz_n{x_n}^\dagger,
\]
and
\[
(p_ny_n + p'_nz_n)\overline{x_n} = p_ny_n\overline{x_n}+ p'_nz_n\overline{x_n}.
\]

Now suppose that we have picked different sequences $(x'_n)_n, (y'_n)_n$, and $(z'_n)_n$ with properties analogous to the ones of $(x_n)_n, (y_n)_n$, and $(z_n)_n$. Thanks to \cref{common multi,homothety} we may assume that $(x_n)_n = (x'_n)_n$, which in particular implies that $q_n y_n = q_n y'_n$ and $q'_nz_n = q'_n z'_n$.
We now need to show that $(p_n y_n + p'_n z_n, x_n)_n \sim (p_n y'_n + p'_n z'_n, x_n)_n$.
\cref{approx Ore,caln to calu} give us sequences $(a_n)_n$ and $(b_n)_n$ in $\caln K[t^{\pm 1}]$ such that for all $n$
\[
(y_n - y'_n)a_n = \pi_{(\ker \init q_n)^\perp} b_n,
\]
and such that $(a_n)_n$ is asymptotically injective. We claim that
\[
\pi_{(\ker \init q_n)^\perp} b_n = 0.\]
 We have
\[
0 = 0 \cdot a_n  = q_n (y_n - y'_n)a_n = q_n \pi_{(\ker \init q_n)^\perp} b_n.
\]
Hence,
\[
\init q_n \init (\pi_{(\ker \init q_n)^\perp} b_n) = 0.
\]
But $\init (\pi_{(\ker \init q_n)^\perp} b_n)$ is of the form $\pi_{(\ker \init q_n)^\perp} c t^j$ for some $j$ and $c \in \caln K$, and $\init q_n$ is clearly injective on $(\ker \init q_n)^\perp$. Hence $\pi_{(\ker \init q_n)^\perp} c t^j = 0$ and the claim follows.
We deduce that $(y_n - y'_n) a_n = 0$.
We then have
\[(p_n y_n + p'_n z_n, x_n)_n \sim (p_n y_n a_n + p'_n z_n a_n, x_n a_n)_n\]
 and $p_n y_n a_n = p_n y'_n a_n$. We repeat the argument for $z_na_n$ and $z'_n a_n$. This shows that addition is well defined.

It is clear that addition has a neutral element $(0,1)_n$, and that
\[(p_n,q_n)_n + (-p_n, q_n)_n = (0,q_n)_n \sim (0,1)_n.\qedhere\]
\end{markproof}

\begin{markproof}[Ring structure: multiplication]
We are left with defining multiplication. Let $(p_n,q_n)_n$ and $(p'_n,q'_n)_n$ be given elements of $\wrat(K,t)$. Using \cref{approx Ore}, we get an asymptotically injective sequence $(s_n)_n$ and a sequence $(r_n)_n$ such that
\[
p'_n s_n = q_n r_n
\]
for every $n$.
We define $(p_n,q_n)_n \cdot (p'_n, q'_n)_n = (p_n r_n, q'_n s_n)$. We first need to check that the output is an element of $\wrat(K,t)$. By \cref{agreement}\eqref{agreement 1}, $\left( (q'_ns_n)^\dagger \right)_n \apa ({s_n}^\dagger {q'_n}^\dagger)_n$; by \eqref{agreement 2},
  $({q_n}^\dagger q_n)_n \apa ({s_n}^\dagger s_n)_n \apa (1)_n$. We therefore have
  \begin{align*}
  \left( p_n r_n (q'_n s_n)^\dagger \right)_n &\apa \left( p_n {q_n}^\dagger q_n r_n {s_n}^\dagger {q'_n}^\dagger \right)_n\\
  &=\ \ \left( p_n {q_n}^\dagger p'_n s_n {s_n}^\dagger {q'_n}^\dagger \right)_n\\
  &\apa \left( p_n {q_n}^\dagger p'_n  {q'_n}^\dagger \right)_n.\end{align*}
    Since $\left( p_n {q_n}^\dagger \right)_n$ and $\left( p'_n  {q'_n}^\dagger \right)_n$ stabilise,
   we have
    \begin{align*}
    \left( p_n {q_n}^\dagger p'_n  {q'_n}^\dagger \right)_n &\apa \left( p_n {q_n}^\dagger p'_{n+1}  {q'_{n+1}}^\dagger \right)_n\\
    &\apa \left( p_{n+1} {q_{n+1}}^\dagger p'_{n+1}  {q'_{n+1}}^\dagger \right)_n\\
    &\apa \left( p_{n+1} r_{n+1} (q'_{n+1} s_{n+1})^\dagger \right)_n.
    \end{align*}

 The argument for
 \[\left( p_n r_n \overline{q'_n s_n} \right)_n \apa_K \left( p_{n+1} r_{n+1} \overline{q'_{n+1} s_{n+1}} \right)_n\]
  is analogous.

The next step is to check that our multiplication is well defined. There are three parts to this. First, for fixed sequences $(p_n,q_n)_n$ and $(p'_n,q'_n)_n$, we could have chosen different sequences $(r_n)_n$ and $(s_n)_n$. By \cref{common multi}, it is enough to check what happens if we multiply $(s_n)_n$ and $(r_n)_n$ on the right by the same asymptotically injective sequence. But then it is clear that the output of our multiplication is in the same $\sim$-equivalence class. Second, we could have used different sequences to represent the equivalence class of $(p_n,q_n)_n$. Again, it is enough to check what happens if we multiply $(p_n)_n$ and $(q_n)_n$ by the same asymptotically injective sequence $(x_n)_n$.
Using \cref{approx Ore}, we get an asymptotically injective sequence $(s'_n)_n$ and a sequence $(r'_n)_n$ such that
\[
p'_n s'_n = q_n x_n r'_n.
\]
This is the same as choosing   $(x_n r'_n)_n$ and $(s'_n)_n$
instead of  $(r_n)_n$ and $(s_n)_n$, and we already know that such a change does not affect the final output. The last case, in which we multiply $(p'_n)_n$ and $(q'_n)_n$ by the same asymptotically injective sequence, is analogous.

Finally, $(1,1)_n$ is obviously a neutral element of the multiplication. We are left with distributivity. Consider three elements $(p_n,q_n)_n$, $(p'_n,q'_n)_n$, and $(p''_n,q''_n)_n$ of $\wrat(K,t)$. We need to verify that
\[
\left( (p_n,q_n)_n + (p'_n,q'_n)_n \right) \cdot (p''_n,q''_n)_n = (p_n,q_n)_n \cdot (p''_n,q''_n)_n + (p'_n,q'_n)_n \cdot (p''_n,q''_n)_n.
\]
We have already checked that we may move freely within the equivalence classes, and so we may assume that the first two sequences are already brought to common denominators, that is, $q_n = q'_n$ for every $n$. Now let $(r_n)_n$ be a sequence, and $(s_n)_n$ an asympotically injective sequence, such that $p''_n s_n = q_n r_n$. Then the left-hand side of our equation becomes
\[
\left( (p_n + p'_n)r_n, q''_n s_n \right)_n
\]
and the right-hand side becomes
\[
\left( p_n r_n, q''_n s_n \right)_n + \left( p'_n r_n, q''_n s_n \right)_n.
\]
These two expressions yield equal elements in $\wrat(K,t)$.\end{markproof}
As we have verified all of the claims, the proof is complete.\end{proof}

Henceforth, we will always endow $\wrat(K,t)$ with the above ring structure.

Recall that if we are given a stabilising sequence in $\calu G$, we have the limit map $\lambda_{\calu G}$ described in \cref{limit} returning a single element in $\calu G$.
Similarly, given an admissible $K$-stabilising sequence in $\calu K [t^{\pm 1} \rrbracket$, we may apply the limit map over $\calu K$ in every degree separately, and obtain a map $\lambda_{\calu K [t^{\pm 1} \rrbracket}$ returning an element of $\calu K [t^{\pm 1} \rrbracket$.

\begin{prop}
	\label{Lambdas}
	 The map $\Lambda_{\calu G} \colon \wrat(K,t) \to \calu G$ obtained by composing
	 \[(p_n,q_n)_n \mapsto (p_n {q_n}^\dagger)_n\]
	  with $\lambda_{\calu G}$ is an injective ring homomorphism.
	
	Similarly,
	  the map $\Lambda_{\calu K [t^{\pm 1} \rrbracket} \colon \wrat(K,t) \to \calu K [t^{\pm 1} \rrbracket$ obtained by composing
	  \[{(p_n,q_n)_n \mapsto (p_n \overline{q_n})_n}\]
	   with $\lambda_{\calu K [t^{\pm 1} \rrbracket}$ is an injective ring homomorphism.
\end{prop}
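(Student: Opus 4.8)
The plan is to check, in turn, that $\Lambda_{\calu G}$ and $\Lambda_{\calu K[t^{\pm 1}\rrbracket}$ are well defined, that they are ring homomorphisms, and that they are injective. Throughout one uses that the limit maps $\lambda_{\calu G}$ and $\lambda_{\calu K[t^{\pm 1}\rrbracket}$ are themselves ``ring homomorphisms modulo asymptotic agreement'': if $(x_n)_n\apa(y_n)_n$ (resp. $\apa_K$) and one side stabilises (resp. $K$-stabilises), then so does the other with the same limit, by uniqueness in \cref{limit}; and $\lambda_{\calu G}$ is additive and multiplicative on stabilising sequences by \cref{apa}, as is $\lambda_{\calu K[t^{\pm 1}\rrbracket}$ by \cref{apaK}. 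For well-definedness one must see that the $\apa$-class of $(p_n q_n^\dagger)_n$ and the $\apa_K$-class of $(p_n\overline{q_n})_n$ depend only on the $\sim$-class of $(p_n,q_n)_n$: if $p_nr_n=p'_ns_n$ and $q_nr_n=q'_ns_n$ with $(r_n)_n,(s_n)_n$ asymptotically injective, then \cref{agreement}\eqref{agreement 3} gives $(p_nq_n^\dagger)_n\apa(p_nr_n(q_nr_n)^\dagger)_n=(p'_ns_n(q'_ns_n)^\dagger)_n\apa(p'_n(q'_n)^\dagger)_n$, and the same chain with \cref{ex is an inverse}(3)--(4) in place of \cref{agreement} settles the power-series side.

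For the ring structure, $(1,1)_n$ maps to the limit of the constant sequence $1$, namely $1$. Additivity: the sum in $\wrat(K,t)$ is represented by $(p_ny_n+p'_nz_n,x_n)_n$ where $x_n=q_ny_n=q'_nz_n$ comes from \cref{common multi}; since $(p_ny_n+p'_nz_n)x_n^\dagger=p_ny_nx_n^\dagger+p'_nz_nx_n^\dagger$, \cref{agreement}\eqref{agreement 3} and \cref{apa}\eqref{apa 1} give $\bigl((p_ny_n+p'_nz_n)x_n^\dagger\bigr)_n\apa\bigl(p_nq_n^\dagger+p'_n(q'_n)^\dagger\bigr)_n$, and applying $\lambda_{\calu G}$ (additive) finishes the case; the power-series case is identical with $\overline{(-)}$ replacing $(-)^\dagger$ and \cref{ex is an inverse} replacing \cref{agreement}. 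Multiplicativity: the product is represented by $(p_nr_n,q'_ns_n)_n$ with $p'_ns_n=q_nr_n$ from \cref{approx Ore}, and the computation already carried out in the proof of \cref{ring struct} shows $(p_nr_n(q'_ns_n)^\dagger)_n\apa\bigl(p_nq_n^\dagger\,p'_n(q'_n)^\dagger\bigr)_n$, whence $\lambda_{\calu G}$ (multiplicative) does the rest; again the power-series analogue is the same with $\overline{(-)}$ and \cref{ex is an inverse}.

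Now the heart of the matter, injectivity. Suppose $\Lambda_{\calu G}((p_n,q_n)_n)=0$, i.e.\ $(p_nq_n^\dagger)_n\apa(0)_n$. Multiplying on the right by $q_n$ and using $(q_n^\dagger q_n)_n=(\pi_{(\ker q_n)^\perp})_n\apa(1)_n$ --- valid since $\dim_{\caln G}\ker q_n\leqslant\nul q_n$ by \cref{Linnell cor} and $\sum_n\nul q_n<\infty$ --- together with \cref{apa}, one gets $(p_n)_n\apa(0)_n$; that is, writing $p_n=\sum_i t^ia_{n,i}$, the numbers $\epsilon_n:=\dim_{\caln G}\overline{\im p_n}$ satisfy $\sum_n\epsilon_n<\infty$. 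Here one invokes the key identity
\[
\dim_{\caln G}\ker\Bigl(\textstyle\sum_i t^ia_i\Bigr)=\dim_{\caln K}\bigcap_i\ker a_i\qquad\text{for }\textstyle\sum_i t^ia_i\in\calu K[t^{\pm 1}],
\]
whose inequality $\geqslant$ is immediate from $\bigoplus_j\bigl(\bigcap_i\ker a_i\bigr)t^j\subseteq\ker(\sum_i t^ia_i)$, and whose inequality $\leqslant$ strengthens \cref{Linnell cor} and is proved by the same kind of Linnell zero-divisor argument via \cref{Linnell zero divs}. It follows that $\dim_{\caln K}\bigl(\bigcap_i\ker a_{n,i}\bigr)^\perp=\epsilon_n$ is summable. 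Put $r_n:=\pi_{\bigcap_i\ker a_{n,i}}\in\calu K\subseteq\calu K[t^{\pm 1}]$ and $s_n:=q_nr_n$. Then $(r_n)_n$ is admissible with $\nul r_n=\dim_{\caln K}(\bigcap_i\ker a_{n,i})^\perp$, hence asymptotically injective; $(s_n)_n$ is a termwise product of asymptotically injective sequences, hence asymptotically injective; and $p_nr_n=0=0\cdot s_n$ while $q_nr_n=s_n=1\cdot s_n$. Thus $(p_n,q_n)_n\sim(0,1)_n$, the zero of $\wrat(K,t)$, proving $\Lambda_{\calu G}$ injective. The injectivity of $\Lambda_{\calu K[t^{\pm 1}\rrbracket}$ is obtained by the parallel reduction in which $\apa_K$, $\overline{(-)}$ and the $K$-stabilising hypothesis play the roles of $\apa$, $(-)^\dagger$ and the stabilising hypothesis --- the $K$-stabilising condition is essential here, as it is exactly what rules out pathological sequences such as $(t^n)_n$ --- and which, combined with the same kernel identity and the $\prewrat$-conditions, again yields $\sum_n\dim_{\caln K}(\bigcap_i\ker a_{n,i})^\perp<\infty$ and the same construction of $(r_n)_n$.

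The step I expect to be the real obstacle is the sharp kernel-dimension identity above: \cref{Linnell cor} by itself only gives $\dim_{\caln G}\ker p\leqslant\dim_{\caln K}\ker(\text{pure part of }\init p)$, and naively iterating this degree-by-degree loses control because the number of iterations equals the degree span of $p_n$, which is not uniformly bounded, so the accumulated nullity need not be summable. Proving the identity --- equivalently, that an $\ell^2$-vector annihilated by $\sum_i t^ia_i$ must be annihilated slicewise by every $a_i$, the cross-slice cancellations being excluded by square-summability --- is what makes the one-step construction of $(r_n)_n$ possible, and is therefore where the work lies; matching this up with the $K$-stabilising side for $\Lambda_{\calu K[t^{\pm 1}\rrbracket}$ is the second point requiring care.
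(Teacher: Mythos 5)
Your treatment of well-definedness, of the homomorphism property, and the reduction of injectivity all follow the paper's route: well-definedness comes from \cref{agreement}\eqref{agreement 3} (and \cref{ex is an inverse} on the power-series side), additivity and multiplicativity come from \cref{apa}, \cref{apaK}, the computations already carried out in \cref{ring struct}, and the uniqueness part of \cref{limit}, and the chain $(0)_n\apa(p_n{q_n}^\dagger q_n)_n\apa(p_n)_n$ is exactly the paper's. The gap is in the step you yourself flag as the crux: the identity
\[
\dim_{\caln G}\ker\Bigl(\textstyle\sum_i t^ia_i\Bigr)=\dim_{\caln K}\textstyle\bigcap_i\ker a_i
\]
is \emph{false}, so the witness $r_n=\pi_{\bigcap_i\ker a_{n,i}}$ does not work. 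Take $G=K\times\ZZ$ (no twisting) with $\caln K$ containing a unital copy of $M_2(\CC)$ with matrix units $e_{ij}$ --- e.g.\ $K=F_2$, since a $\mathrm{II}_1$ factor contains unital matrix subalgebras --- and set $a_0=e_{12}$, $a_1=e_{11}$, $p=a_0+ta_1$. Writing $\ell^2K\cong\CC^2\otimes H$, one has $\ker a_0\cap\ker a_1=(\CC e_1\otimes H)\cap(\CC e_2\otimes H)=0$, whereas $\ker(a_0+wa_1)=\CC(1,-w)\otimes H$ has $\caln K$-dimension $\tfrac12$ for \emph{every} $w\in S^1$. Under $\caln G\cong\caln K\,\bar\otimes\,L^\infty(S^1)$ the operator $p$ is the field $w\mapsto a_0+wa_1$, so
\[
\dim_{\caln G}\ker p=\int_{S^1}\dim_{\caln K}\ker(a_0+wa_1)\,dw=\tfrac12\neq 0=\dim_{\caln K}(\ker a_0\cap\ker a_1).
\]
(This is consistent with \cref{Linnell cor}, which here gives the sharp bound $\nul p=\dim_{\caln K}\ker e_{12}=\tfrac12$; your proposed strengthening of that lemma is exactly what fails. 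Your Fubini heuristic is correct only when $\caln K$ is abelian, where for fixed $z$ at most one $w$ achieves cancellation; in a matrix block the cancelling line can rotate with $w$.)

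The failure is not only of the clean identity but of the summability you need: taking $p_n=\sum_{i=0}^{d_n}t^ie_{1,i+1}$ inside unital copies of $M_{d_n+1}(\CC)\subseteq\caln K$ gives $\dim_{\caln G}(\ker p_n)^\perp=1/(d_n+1)$, summable for $d_n=2^n$, while $\bigcap_i\ker a_{n,i}=0$ for every $n$, so your $(r_n)_n$ is the zero sequence and is not asymptotically injective. What the final step actually requires (and what the paper asserts rather tersely) is weaker: \emph{some} asymptotically injective $(r_n)_n$ in $\calu K[t^{\pm1}]$ with $p_nr_n=0$, and the right annihilator of $p_n$ in $\calu K[t^{\pm1}]$ is genuinely larger than $\pi_{\bigcap_i\ker a_{n,i}}\calu K$ --- in the $2\times 2$ example above, $r=e_{11}-te_{21}$ satisfies $pr=0$ and $\nul r=\tfrac12=\dim_{\caln G}(\ker p)^\perp$, so the proposition itself is not contradicted. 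But your argument, as written, does not produce such an $r_n$, so the injectivity claim remains unproved.
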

\begin{proof}
	Let us start with $\Lambda_{\calu G}$. Take $(p_n,q_n)_n \sim (p'_n,q'_n) \in \wrat(K,t)$. By \cref{agreement}, we have $(p_n {q_n}^\dagger)_n \apa (p'_n {q'_n}^\dagger)_n$, and so  $\lambda_{\calu G}$ sends both to the same element of $\calu G$ by \cref{limit}. Hence $\Lambda_{\calu G}$ is well defined.
	
	To check that $\Lambda_{\calu G}$ is additive and multiplicative, we use \cref{apa} and the uniqueness part of \cref{limit}.
	Thus, $\Lambda_{\calu G}$  is a ring homomorphism.
	
	Finally, suppose that $\Lambda_{\calu G}\left( (p_n,q_n)_n \right) = 0$. Unravelling the definitions, we get $(p_n {q_n}^\dagger)_n \apa (0)_n$. Since $(q_n)_n$ is asymptotically injective, \cref{apa,agreement} give
	\[
	(0)_n \apa (0 \cdot q_n)_n \apa (p_n {q_n}^\dagger q_n)_n \apa (p_n)_n
	\]
	and so $(p_n,q_n) = (0,1)$ in $\wrat(K,t)$.
	
	The situation for the map $\Lambda_{\calu K [t^{\pm 1} \rrbracket}$ is completely analogous: we use \cref{apaK,ex is an inverse} instead of \cref{apa,agreement}, respectively.
\end{proof}

Using the two maps above, we may view $\wrat(K,t)$ as a subring of both $\calu G$ and $\calu K[t^{\pm 1} \rrbracket$.

\begin{lemma}
	\label{obvious inverse}
	If $(p_n)_n$ and $(q_n)_n$ are asymptotically injective sequences over $\calu K [t^{\pm 1}]$ and $(p_n,q_n)_n \in \wrat(K,t)$, then $(q_n,p_n)_n \in \wrat(K,t)$ as well, and $(p_n,q_n)_n \cdot (q_n,p_n)_n = (1,1)_n$.
\end{lemma}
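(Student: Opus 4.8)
The plan is to check directly that $(q_n,p_n)_n$ satisfies the four requirements in the definition of $\prewrat(K,t)$, and then to read the product off the multiplication formula of \cref{ring struct} using the trivial choice of auxiliary sequences. The first two requirements are immediate: $(q_n)_n$ is admissible because it is asymptotically injective, and $(p_n)_n$ is asymptotically injective by hypothesis — this extra assumption, stronger than the mere admissibility that membership of $(p_n,q_n)_n$ in $\wrat(K,t)$ would give, is precisely what allows $p_n$ to play the role of a denominator.

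For the two stabilisation requirements I would first record the cancellation identities: for an asymptotically injective sequence $(s_n)_n$ in $\calu K[t^{\pm1}]$ one has $(s_n{s_n}^\dagger)_n \apa (1)_n$ by \cref{agreement}\eqref{agreement 2}, and also $({s_n}^\dagger s_n)_n \apa (1)_n$ because ${s_n}^\dagger s_n = \pi_{(\ker s_n)^\perp}$ and $\dim_{\caln G}\ker s_n \leqslant \nul s_n$ is summable by \cref{Linnell cor}; the corresponding statements with the expansion $\overline{\phantom{q}}$ replacing ${}^\dagger$ and $\apa_K$ replacing $\apa$ are the content of the third item of \cref{ex is an inverse}. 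Writing $a_n = p_n{q_n}^\dagger$ and $b_n = q_n{p_n}^\dagger$, \cref{apa} then gives $a_nb_n \apa p_n{p_n}^\dagger \apa 1$ and $b_na_n \apa q_n{q_n}^\dagger \apa 1$. As $(p_n,q_n)_n \in \wrat(K,t)$, the sequence $(a_n)_n$ stabilises, i.e.\ $a_n \apa a_{n+1}$, and then (using \cref{apa} at each step)
\[
b_n \apa b_n(a_{n+1}b_{n+1}) = (b_na_{n+1})b_{n+1} \apa (b_na_n)b_{n+1} \apa b_{n+1},
\]
so that $(q_n{p_n}^\dagger)_n$ stabilises. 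Replacing ${}^\dagger$ by the expansion, $\apa$ by $\apa_K$, \cref{apa} by \cref{apaK}, and \cref{agreement} by \cref{ex is an inverse} throughout, the identical computation shows that $(q_n\overline{p_n})_n$ is $K$-stabilising. Hence $(q_n,p_n)_n \in \prewrat(K,t)$.

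For the product, I would apply the multiplication rule of \cref{ring struct} with $(p'_n,q'_n)_n := (q_n,p_n)_n$: it asks for an asymptotically injective sequence $(s_n)_n$ and a sequence $(r_n)_n$ with $p'_ns_n = q_nr_n$, that is $q_ns_n = q_nr_n$, and $r_n = s_n = 1$ plainly works, giving $(p_n,q_n)_n\cdot(q_n,p_n)_n = (p_nr_n,q'_ns_n)_n = (p_n,p_n)_n$. It remains to note that $(p_n,p_n)_n$ lies in $\prewrat(K,t)$ — clear, since $(p_n{p_n}^\dagger)_n \apa (1)_n$ and $(p_n\overline{p_n})_n \apa_K (1)_n$ — and that $(p_n,p_n)_n \sim (1,1)_n$, a relation witnessed by the asymptotically injective pair $(r_n)_n = (1)_n$ and $(s_n)_n = (p_n)_n$, since $p_n\cdot 1 = 1\cdot p_n$ in both coordinates.

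The one point requiring care is the asymptotic chain proving $b_n \apa b_{n+1}$: one has to bear in mind that $a_nb_n$ and $b_na_n$ are only asymptotically — not literally — equal to $1$, so the computation must be routed through $a_{n+1}b_{n+1}$ rather than collapsing $a_nb_n$ to $1$ prematurely, and each arrow is an instance of one- or two-sided multiplicativity of $\apa$ from \cref{apa}. Everything else is routine bookkeeping with the definitions.
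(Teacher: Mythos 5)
Your proof is correct, and its overall skeleton (verify the four conditions of $\prewrat(K,t)$ for $(q_n,p_n)_n$, then evaluate the product with the trivial choice $r_n=s_n=1$ and exhibit $(p_n,p_n)_n\sim(1,1)_n$) matches the paper's. Where you genuinely diverge is in the key stabilisation step. The paper deduces that $(q_n{p_n}^\dagger)_n$ stabilises from \cref{adjoints and daggers} together with \cref{agreement}\eqref{agreement 1}: since $(p_n{q_n}^\dagger)_n$ stabilises, so does the sequence of its partial inverses, and $\left((p_n{q_n}^\dagger)^\dagger\right)_n \apa \left({{q_n}^\dagger}^\dagger{p_n}^\dagger\right)_n = (q_n{p_n}^\dagger)_n$. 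You instead run a purely algebraic ``conjugation by approximate two-sided inverses'' chain, $b_n \apa b_n(a_{n+1}b_{n+1}) = (b_na_{n+1})b_{n+1} \apa (b_na_n)b_{n+1} \apa b_{n+1}$, which never invokes \cref{adjoints and daggers} and transfers verbatim to the $\apa_K$ setting; for the $K$-stabilisation the paper argues differently again, via $\init(\overline{p_n})=(\init p_n)^\dagger$ and asymptotic injectivity of $(\overline{p_n})_n$ followed by \cref{ex is an inverse}. Your uniform treatment of the two stabilisation conditions is arguably cleaner and more self-contained (it makes explicit why the two-sidedness of the approximate inverses is what matters), at the cost of being longer than the paper's one-line appeals; both are valid. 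Your justification of $({s_n}^\dagger s_n)_n\apa(1)_n$ via ${s_n}^\dagger s_n=\pi_{(\ker s_n)^\perp}$ and \cref{Linnell cor} is exactly the argument the paper itself uses implicitly when it cites \cref{agreement}\eqref{agreement 2} for this fact. No gaps.
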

\begin{proof}
	To check that $(q_n,p_n)_n \in \wrat(K,t)$, we need to show that $(q_n{p_n}^\dagger)_n$ stabilises and that $(q_n\overline{p_n})_n$ $K$-stabilises. The first follows immediately from the fact that $(p_n{q_n}^\dagger)_n$ stabilises, \cref{adjoints and daggers}, and the observation that ${{q_n}^\dagger}^\dagger = q_n$.
	
	For the second, we first observe that $\init (\overline{p_n}) = (\init p_n) ^\dagger$, and hence $(\overline{p_n})_n$ is an asymptotically injective sequence. Therefore so is $(q_n\overline{p_n})_n$. We then use \cref{ex is an inverse}.
	
	The last statement follows directly from the definition of multiplication.
\end{proof}

\begin{prop}
	\label{wrat is division closed}
	The subring
	\[\nov \QQ G \phi \cap \wrat(K,t)\]
	 of $\calu K [t^{\pm1} \rrbracket$ contains $\QQ G$ and is division-closed inside $\nov \QQ G \phi$.
\end{prop}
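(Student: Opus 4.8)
The plan is as follows. For the first assertion, identify $\QQ G$ with the twisted Laurent polynomial ring $\QQ K[t^{\pm 1}]$, which sits inside $\calu K[t^{\pm 1}] \subseteq \calu K[t^{\pm 1}\rrbracket$. As a subring of $\calu K[t^{\pm 1}\rrbracket$ it lies inside $\nov{\QQ}{G}{\phi}$, since a Laurent polynomial has finite, hence bounded-below, support; and it lies inside $\wrat(K,t)$ because the embedding $\iota\colon p\mapsto(p,1)_n$ of \Cref{ring struct}, followed by $\Lambda_{\calu K [t^{\pm 1} \rrbracket}$, is the identity on $\calu K[t^{\pm 1}]$ (using $\overline{1}=1$, which is \Cref{ex for invertibles}). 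Thus the two copies of $\QQ G$ inside $\calu K[t^{\pm 1}\rrbracket$ coincide, and $\QQ G \subseteq \nov{\QQ}{G}{\phi}\cap\wrat(K,t)$.

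For division-closedness, let $x\in\nov{\QQ}{G}{\phi}\cap\wrat(K,t)$ be invertible in $\nov{\QQ}{G}{\phi}$; then $x^{-1}\in\nov{\QQ}{G}{\phi}$ already, and it remains to show $x^{-1}\in\wrat(K,t)$. Writing $x=\sum_{i\geqslant k}t^i a_i$ with $a_i\in\QQ K$ and $a_k\neq0$, a comparison of lowest $t$-degree terms in $xx^{-1}=1=x^{-1}x$ shows that $a_k$ is a unit of $\QQ K$, so $t^k a_k$ is a unit of $\QQ G$ and hence of $\wrat(K,t)$ by the previous paragraph. Replacing $x$ by $(t^k a_k)^{-1}x$ — which still lies in $\nov{\QQ}{G}{\phi}\cap\wrat(K,t)$, is still invertible over $\nov{\QQ}{G}{\phi}$, but now has $\init x=1$ — I reduce to the case $\init x=1$; in particular $x$ is then a unit of $\calu K[t^{\pm 1}\rrbracket$.

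Now fix a representative $(p_n,q_n)_n\in\prewrat(K,t)$ of $x$. By definition $(q_n)_n$ is asymptotically injective, and the $K$-stabilising sequence $(p_n\overline{q_n})_n$ has limit $x$, so $(p_n\overline{q_n})_n\apa_K(x)_n$. Combining this with $(\overline{q_n}q_n)_n\apa_K(1)_n$ from \Cref{ex is an inverse} and the arithmetic of \Cref{apaK} gives
\[
(p_n)_n \apa_K (p_n\overline{q_n}q_n)_n \apa_K (xq_n)_n.
\]
Since $\init x=1$, the initial term of $xq_n$ has the same pure part as that of $q_n$ up to the twisting automorphism, so $\nul(xq_n)=\nul q_n$ and $(xq_n)_n$ is asymptotically injective. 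The heart of the matter is to upgrade this to the statement that $(p_n)_n$ itself is asymptotically injective. The idea is that asymptotic agreement of admissible sequences carries over asymptotic injectivity: from the display above, together with the identity $\dim_{\caln K}\overline{\im a}=\dim_{\caln K}(\ker a)^\perp$ in the finite von Neumann algebra $\caln K$ and careful bookkeeping of the initial degrees (which on the right-hand side are pinned down precisely because $\init x=1$), one bounds $\sum_n\nul p_n$ by $\sum_n\nul q_n$ plus a convergent correction coming from the error terms of $\apa_K$, so that $\sum_n\nul p_n<\infty$.

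Once $(p_n)_n$ and $(q_n)_n$ are both asymptotically injective, \Cref{obvious inverse} yields $(q_n,p_n)_n\in\wrat(K,t)$ with $(p_n,q_n)_n\cdot(q_n,p_n)_n=(1,1)_n$, so $(q_n,p_n)_n$ is an inverse of $x$ in $\wrat(K,t)$; it must coincide with $x^{-1}$, which lies in $\nov{\QQ}{G}{\phi}$, and therefore $x^{-1}\in\nov{\QQ}{G}{\phi}\cap\wrat(K,t)$, as required. I expect the genuine obstacle to be exactly the nullity estimate $\sum_n\nul p_n<\infty$ of the previous paragraph: one only controls the coefficients of $p_n$ degree by degree through $(p_n)_n\apa_K(xq_n)_n$, and a priori the initial degree of $p_n$ need not match that of $xq_n$, so one must use the normalisation $\init x=1$ and the trace identity above to compare leading pure parts and keep the correction term summable.
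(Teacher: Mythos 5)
The first half of your argument (that $\QQ G$ sits inside the intersection) is fine and matches the paper. The division-closedness half has two genuine gaps.

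First, your reduction to $\init x=1$ is broken: the initial coefficient $a_k$ of an element of $\nov \QQ G \phi$ that is invertible there need \emph{not} be a unit of $\QQ K$, because $\QQ K$ can have zero-divisors and the lowest-degree term of a product need not be the product of the lowest-degree terms. Concretely, take $G=\Z\times\Z/2=\langle t\rangle\times\langle s\rangle$, $K=\langle s\rangle$, and $x=e_++te_-$ with $e_\pm=\tfrac{1\pm s}{2}$; then $x^{-1}=e_++t^{-1}e_-\in\QQ G$, yet $\init x=e_+$ is a non-trivial idempotent, not a unit. The paper achieves the same normalisation differently: it truncates $x^{-1}$ to get $y\in\QQ G$ with $\init(xy)=1$, and then proves that $xy$ is invertible in $\wrat(K,t)$, which suffices since $y\in\QQ G\subseteq\wrat(K,t)$ and inverses in the overring $\calu K[t^{\pm1}\rrbracket$ are unique.

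Second, the step you yourself flag as the obstacle --- deducing $\sum_n\nul p_n<\infty$ from $(p_n)_n\apa_K(xq_n)_n$ --- is not just unproved but false for an arbitrary representative $(p_n,q_n)_n$. The relation $\apa_K$ only controls coefficients degree by degree, so $p_n$ may carry a coefficient in some degree \emph{below} that of $\init(xq_n)$ whose closure of image has dimension tending to $0$ summably (so $\apa_K$ is respected) but whose kernel has dimension tending to $1$; this coefficient is then the pure part of $\init p_n$ and makes $\nul p_n\to 1$, destroying asymptotic injectivity. No bound of the form ``$\sum_n\nul q_n$ plus a summable correction'' can rule this out. The paper's proof deals with exactly this by first replacing $(p_n,q_n)_n$ with an equivalent pair $(p_nr_n,q_nr_n)_n$, where the $r_n$ are projections chosen to annihilate the spurious low-degree coefficients, iterating finitely many times until the associated power of $\init p'_n$ is eventually $0$; only then does the identity $\init(p'_n\overline{q'_n})=\init p'_n(\init q'_n)^\dagger$ together with $(p'_n\overline{q'_n})_n\apa_K(xy)_n$ and $\init(xy)=1$ yield asymptotic injectivity of $(p'_n)_n$, after which \Cref{obvious inverse} finishes as you intend. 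So your overall strategy is the right one, but both the normalisation step and the crux need to be repaired along these lines.
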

\begin{proof}
	\cref{ring struct} tells us that $\wrat(K,t)$ contains $\calu K[t^{\pm1}]$, which in turn contains $\QQ K[t^{\pm1}] = \QQ G$.
	
	Now take a Laurent power series $x = \sum_{i=k}^\infty t^i x_i \in \nov \QQ G \phi \cap \wrat(K,t)$ with $x_i \in \QQ K$ for all $i$, and suppose that it is invertible in $\nov \QQ G \phi$. Truncating the inverse we find $y \in \QQ G$ such that $\init (xy) = 1$. Since $x$ and $y$ lie in $\wrat(K,t)$, and since the latter object is a ring, we conclude that $xy \in \wrat(K,t)$ as well. We thus have $(p_n,q_n)_n \in \wrat(K,t)$ with $\Lambda_{\calu K [t^{\pm 1} \rrbracket}\left( (p_n,q_n)_n \right) = xy$.
	By multiplying by suitable powers of $t$, we easily arrange for the associated power of $\init q_n$ to be $0$, for every $n$.
	
	We now need to worry about a pathological situation in which the powers associated to $\init p_n$ are not $0$. It is easy to see that the power cannot be positive infinitely often, since then the sequence of terms next to $t^0$ in $p_n \overline{ q_n}$ could not be equal to $1$ in the limit. So the powers are eventually non-positive. This remains true for every other pair of sequences $\sim$-equivalent to $(p_n,q_n)_n$.
	
	 Since $p_n$ is admissible, these powers are bounded from below. We are going to modify $p_n$, staying in the same $\sim$-equivalence class, so that the $\liminf$ of the powers becomes zero.
	
	 Suppose that the lowest power that appears infinitely often is $k$. Since the terms next to $t^k$ in $p_n \overline{q_n}$ tend to $0$, we easily produce an asymptotically injective sequence $(r_n)_n$ of projections in $\caln K$ such that $p_n r_n$ has initial term with associated power greater than $k$ for every $n$. We also replace $q_n$ with $q_nr_n$, and obtain $(p_n,q_n)_n \sim (p_nr_n,q_nr_n)_n$. We repeat this process $|k|$-times, and arrive at $(p_n,q_n)_n \sim (p'_n,q'_n)_n$ with the initial terms of $q'_n$ all having associated power $0$, and with the corresponding power for $\init p'_n$ being eventually $0$.
	
	We claim that $(p'_n)_n$ is asymptotically injective. Once this is shown, \cref{obvious inverse} will tell us that $(p'_n,q'_n)_n$ is invertible in $\wrat(K,t)$, and hence so is $xy$, finishing the proof.
	
	To prove the claim, observe that for large enough $n$ we have
	\[\init (p'_n \overline{q'_n}) = \init p'_n (\init q'_n)^\dagger,\]
	since $(q'_n)_n$ is asymptotically injective.	
	If $(p'_n)_n$ is not asymptotically injective, then neither is \[(\init p'_n (\init q'_n)^\dagger)_n \apa (1)_n.\]
	 This is a contradiction.
\end{proof}

\section{The main results}\label{sec proofs}

\begin{prop}\label{prop complexes}
Let $G$ be a countable discrete group and let
\[\varphi\colon G\to \Z\]
 be a character.  Let $C_\bullet$ be a complex of free $\QQ G$-modules which is finitely generated up to degree $n$ and such that $C_k=0$ for $k<0$.  If
 \[H_j(C_\bullet\otimes_{\QQ G}\nov{\QQ}{G}{\varphi})=0\]
  for $j\leq n$, then $H_j(C_\bullet\otimes_{\QQ G}\calu G)=0$ for $j\leq n$.
\end{prop}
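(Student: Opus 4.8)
The plan is to string together the results of \Cref{sec passing vanishing} and \Cref{sec ring hom} into a tower of mutually compatible coefficient ring extensions
\[
\QQ G \hookrightarrow \cald \hookrightarrow \wrat(K,t) \hookrightarrow \calu G ,
\]
where $\cald = \cald(\QQ G \subset \nov{\QQ}{G}{\varphi})$, and then transport a partial chain contraction up it.

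\emph{Reduction to $\varphi$ surjective.} If $\varphi = 0$, then $\nov{\QQ}{G}{\varphi} = \QQ G$, so the hypothesis says $H_j(C_\bullet) = 0$ for $j \leq n$; as $C_\bullet$ is a non-negatively graded complex of free, hence projective, $\QQ G$-modules, this yields a partial chain contraction over $\QQ G$ in degrees $\leq n$, which we tensor up to $\calu G$ to conclude. If $\varphi \neq 0$, replacing it by $\varphi/m$ with $\varphi(G) = m\ZZ$ alters neither $\nov{\QQ}{G}{\varphi}$ nor $\calu G$, so we may assume $\varphi$ is surjective; since $G$ is countable so is $K = \ker\varphi$, and we are in the setting of \Cref{sec ring hom}.

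\emph{Assembling the tower.} By \Cref{chain contractions}, the hypothesis $H_j(C_\bullet \otimes_{\QQ G} \nov{\QQ}{G}{\varphi}) = 0$ for $j\leq n$ gives $H_j(C_\bullet \otimes_{\QQ G} \cald) = 0$ for $j \leq n$ (the finite-generation hypothesis on $G$ there is used only through finiteness of $C_\bullet$, which we have up to degree~$n$). By \Cref{wrat is division closed}, $\nov{\QQ}{G}{\varphi} \cap \wrat(K,t)$ is a division-closed subring of $\nov{\QQ}{G}{\varphi}$ containing $\QQ G$, so by minimality $\cald \subseteq \nov{\QQ}{G}{\varphi} \cap \wrat(K,t)$; composing $\cald \hookrightarrow \wrat(K,t)$ with the injective ring homomorphism $\Lambda_{\calu G}$ of \Cref{Lambdas} gives a ring homomorphism $\cald \to \calu G$. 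Unwinding the definitions of $\iota$ and $\Lambda_{\calu G}$ (and using $1^\dagger = 1 = \overline{1}$) one checks that this homomorphism restricts to the standard inclusion on $\QQ G$, and that the composite $\cald \hookrightarrow \wrat(K,t) \to \calu K[t^{\pm 1}\rrbracket$ is the given embedding $\cald \subset \nov{\QQ}{G}{\varphi} \subset \calu K[t^{\pm 1}\rrbracket$; hence all the extensions in the tower are mutually compatible and $C_\bullet \otimes_{\QQ G} \calu G \cong (C_\bullet \otimes_{\QQ G} \cald) \otimes_\cald \calu G$.

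\emph{Conclusion.} Now $C_\bullet \otimes_{\QQ G} \cald$ is a non-negatively graded complex of free $\cald$-modules with homology vanishing in degrees $\leq n$, so the standard inductive lifting argument---at stage $j$, lift $\id - s_{j-1}\partial_j \colon C_j \otimes \cald \to \ker\partial_j = \im\partial_{j+1}$ along $\partial_{j+1}$ using projectivity of $C_j \otimes \cald$---gives a partial chain contraction $\{s_j\}_{0 \leq j \leq n}$ over $\cald$. Tensoring with $\calu G$ over $\cald$ transports it to a partial chain contraction of $C_\bullet \otimes_{\QQ G} \calu G$ in degrees $\leq n$, whence $H_j(C_\bullet \otimes_{\QQ G} \calu G) = 0$ for $j \leq n$. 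All the real work sits in the cited results of \Cref{sec passing vanishing} and \Cref{sec ring hom}; at this stage the only thing demanding genuine care is the bookkeeping that the three embeddings agree on the nose (so that the tensor-associativity identity is literally true), plus the cosmetic point that the finiteness hypotheses in \Cref{sec passing vanishing} concern the chain complex rather than $G$ itself. I expect no real obstacle beyond that.
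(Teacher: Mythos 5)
Your proposal is correct and follows essentially the same route as the paper: \Cref{chain contractions} to pass to $\cald(\QQ G\subset\nov{\QQ}{G}{\varphi})$, then \Cref{wrat is division closed} to embed that into $\wrat(K,t)$, then \Cref{Lambdas} to land in $\calu G$, transporting partial chain contractions at each stage. Your extra care about the degenerate/non-surjective $\varphi$ and about the compatibility of the three embeddings with the standard inclusion $\QQ G\hookrightarrow\calu G$ is welcome bookkeeping that the paper leaves implicit, but it does not change the argument.
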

\begin{proof}
	By \cref{chain contractions}, we see that
	\[H_j(C_\bullet\otimes_{\QQ G} \cald (\QQ G \subset \nov \QQ G \phi))=0\]
	 for $j\leq n$. By \cref{wrat is division closed}, the ring $\cald (\QQ G \subset \nov \QQ G \phi)$ is a subring of $\wrat(K,t)$, where $K = \ker \phi$ and $t \in G$ denotes an element such that $\phi(t)$ generates $\im \phi = \Z$. Using a chain contraction up to dimension $n$ we conclude that
		\[H_j(C_\bullet\otimes_{\QQ G} \wrat(K,t) )=0\]
	for $j\leq n$. Now, \cref{Lambdas} tells us that we may view $\wrat(K,t)$ as a subring of $\calu G$, and hence again using chain contractions we see that
		\[H_j(C_\bullet\otimes_{\QQ G} \calu G)=0\]
	for $j\leq n$, and the proof is finished.
\end{proof}

We are now ready to prove the remaining results from the introduction.

\medskip

\setcounter{thmx}{0}
\begin{thmx}\label{thm groups}
    Let $G$ be a group of type $\mathsf{FP}_n(\QQ)$.  If $b_n^{(2)}(G)\neq0$, then $\Sigma^n(G;\QQ)=\emptyset$.
\end{thmx}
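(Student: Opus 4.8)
The plan is to derive \Cref{thm groups} as an essentially formal consequence of the main technical input \Cref{prop complexes} together with Sikorav's Theorem for groups and the algebraic characterisation of $\ell^2$-Betti numbers. First, suppose for contradiction that $G$ is of type $\mathsf{FP}_n(\QQ)$, that $b_n^{(2)}(G)\neq 0$, and that there exists some $\varphi\in\Sigma^n(G;\QQ)$. Since $\Sigma^n(G;\QQ)$ is an open subset of $S(G)=\Hom(G;\RR)\smallsetminus\{0\}$ by \Cref{thm props BNSR}\eqref{thm props BNSR open}, and since characters with image in $\ZZ$ (equivalently, rational characters, after rescaling) are dense in $S(G)$, we may assume that $\varphi\colon G\to\ZZ$ is a discrete character; rescaling does not change membership in $\Sigma^n$. (One should double-check here that openness is exactly what licenses the passage to a rational character — this is the one place where a little care is needed, but it is standard in BNSR theory.)

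Next I would invoke Sikorav's Theorem for groups: since $\varphi\in\Sigma^n(G;\QQ)$, we have $H_i(G;\nov{\QQ}{G}{\varphi})=0$ for all $i\leq n$. Because $G$ is of type $\mathsf{FP}_n(\QQ)$, there is a projective — and after the standard trick, free — resolution $C_\bullet\to\QQ$ of the trivial $\QQ G$-module with $C_i$ finitely generated for $i\leq n$; one can arrange $C_k=0$ for $k<0$, and truncate so that the hypotheses of \Cref{prop complexes} are met (the tail above degree $n$ can be left arbitrary, as \Cref{prop complexes} only constrains $C_\bullet$ up to degree $n$). Then $H_j(C_\bullet\otimes_{\QQ G}\nov{\QQ}{G}{\varphi})=H_j(G;\nov{\QQ}{G}{\varphi})=0$ for $j\leq n$, so \Cref{prop complexes} applies and yields
\[
H_j(C_\bullet\otimes_{\QQ G}\calu G)=0\quad\text{for all }j\leq n.
\]

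Finally, I would translate the vanishing of $H_j(C_\bullet\otimes_{\QQ G}\calu G)$ back into a statement about $\ell^2$-Betti numbers. As recalled in \Cref{sec l2Bno}, the $\ell^2$-homology $H_j^G(EG;\caln G)$ can be computed as the homology of $C_\bullet\otimes_{\QQ G}\calu G$ up to $\calu G$-torsion; more precisely, $H_j(C_\bullet\otimes_{\QQ G}\calu G)\cong H_j^G(EG;\caln G)\otimes_{\caln G}\calu G$, and the von Neumann dimension is unchanged by this tensoring (quotienting by torsion does not change $\dim_{\caln G}$). Hence $H_j(C_\bullet\otimes_{\QQ G}\calu G)=0$ forces $b_j^{(2)}(G)=\dim_{\caln G}H_j^G(EG;\caln G)=0$ for all $j\leq n$, in particular $b_n^{(2)}(G)=0$, contradicting our assumption. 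Therefore $\Sigma^n(G;\QQ)=\emptyset$.

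The main obstacle is not in this deduction, which is bookkeeping, but is entirely encapsulated in \Cref{prop complexes} (and behind it \Cref{wrat is division closed} and the construction of $\wrat(K,t)$ via the approximate Ore condition) — that is where the real work lives, and it is already established in the preceding sections. The only genuinely delicate point in the argument above is the reduction to a discrete character via openness and density of rational characters, together with the scale-invariance of $\Sigma^n$; everything else is a direct chain of implications through \Cref{prop complexes}, Sikorav's Theorem, and the algebraic description of $\ell^2$-homology.
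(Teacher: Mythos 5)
Your proposal is correct and follows essentially the same route as the paper: reduce to integral characters via openness of $\Sigma^n$ and density of rational characters, apply Sikorav's Theorem, feed the vanishing of Novikov homology into \Cref{prop complexes}, and read off the $\ell^2$-Betti numbers from $H_j(C_\bullet\otimes_{\QQ G}\calu G)$. The only cosmetic difference is that you argue by contradiction from $\varphi\in\Sigma^n(G;\QQ)$, whereas the paper runs the contrapositive of \Cref{prop complexes} directly (after first reducing to the minimal $n$ with $b_n^{(2)}(G)\neq 0$, a step your formulation does not need).
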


\begin{proof}
As $\Sigma^k(G;\QQ)\subseteq \Sigma^\ell(G;\QQ)$ for all $k\geq \ell$, we may assume $n$ is the lowest dimension for which $b_n^{(2)}(G)\neq 0$.  In particular, $H_n(G;\calu G)\neq 0$.
 Since $G$ is of type $\mathsf{FP}_n(\QQ)$, there is a resolution $C_\bullet$ of $\QQ$ by free $\QQ G$-modules that is finitely generated up to dimension $n$.
 We may compute $H_p(G;\calu G)$ as $H_p(C_\bullet\otimes_{\QQ G} \calu G)$.
 It follows that $H_n(C_\bullet \otimes_{\QQ G} \calu G)\neq0$.  Thus, by \Cref{prop complexes}, for every character $\varphi\colon G\onto \Z$, we have $H_n(G;\nov{\QQ}{G}{\varphi}) = H_n(C_\bullet \otimes_{\QQ G} \nov{\QQ}{G}{\varphi})\neq0$.
In particular, by openness of the BNSR invariants (\Cref{thm props BNSR}\eqref{thm props BNSR open}) and Sikorav's Theorem, $\Sigma^n(G;\QQ)=\emptyset$.
\end{proof}

The proof of \Cref{thm spaces} is entirely analogous once one replaces $C_\bullet$ with $C_\bullet(\widetilde{X};\QQ)$, the chain complex of the universal cover of $X$, viewed as a chain complex of free $\QQ \pi_1 X$ modules.

\medskip

\setcounter{thmx}{2}
\begin{corx}\label{cor manifolds}
Let $M$ be a closed connected $2n$-manifold or (more generally) a finite $\mathsf{PD}_{2n}(\QQ)$-complex.
If $\chi(M)\neq 0$,
then $\Sigma^n(M)=\Sigma^n(M;\QQ)=\emptyset$.
In particular, if $M$ is additionally aspherical, then
$\Sigma^n(\pi_1M)=\Sigma^n(\pi_1M;\QQ)=\emptyset$.
\end{corx}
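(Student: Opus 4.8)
The plan is to reduce the statement to \Cref{prop complexes} together with $\ell^2$-Poincar\'e duality, in the spirit of the proof of \Cref{thm groups}. Since a closed connected $2n$-manifold is homotopy equivalent to a finite CW complex and is a Poincar\'e duality complex of dimension $2n$ over $\ZZ$, hence over $\QQ$, it suffices to treat a finite $\mathsf{PD}_{2n}(\QQ)$-complex $M$; set $G=\pi_1 M$ and let $C_\bullet = C_\bullet(\widetilde M;\QQ)$, a finite complex of finitely generated free $\QQ G$-modules concentrated in degrees $0,\dots,2n$.

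The first step is to show that $\chi(M)\neq 0$ forces $b_i^{(2)}(\widetilde M;G)\neq 0$ for some $i\leq n$. Here $\ell^2$-Poincar\'e duality gives $b_i^{(2)}(\widetilde M;G)=b_{2n-i}^{(2)}(\widetilde M;G)$ for all $i$: the twist by the one-dimensional dualizing module $D$ is invisible to the von Neumann dimension, and one passes between homology and cohomology using that $C_\bullet$ is finite and free; see \cite[Chapters~6~and~8]{Lueck2002}. Hence, were all $b_i^{(2)}(\widetilde M;G)$ with $i\leq n$ to vanish, all of them would vanish, and the $\ell^2$-Euler--Poincar\'e formula \cite[Theorem~6.80]{Lueck2002} would give $\chi(M)=\sum_i(-1)^i b_i^{(2)}(\widetilde M;G)=0$, a contradiction. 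Fix such an $i$; then $H_i(C_\bullet\otimes_{\QQ G}\calu G)\neq 0$.

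The second step transfers this non-vanishing to the Novikov side. Suppose for contradiction that $\Sigma^n(M;\QQ)\neq\emptyset$; since this set is open in $S(G)$ by \Cref{thm props BNSR}\eqref{thm props BNSR open} and is invariant under positive scaling, it contains a character $\varphi\colon G\onto\Z$. Applying the contrapositive of \Cref{prop complexes} to $H_i(C_\bullet\otimes_{\QQ G}\calu G)\neq 0$ (with $i\leq n$), we get $H_j(C_\bullet\otimes_{\QQ G}\nov{\QQ}{G}{\varphi})\neq 0$ for some $j\leq n$, so by Sikorav's Theorem (\Cref{Sikorav}) we have $\varphi\notin\Sigma^n(M;\QQ)$ --- a contradiction. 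Hence $\Sigma^n(M;\QQ)=\emptyset$, and then $\Sigma^n(M)\subseteq\Sigma^n(M;\QQ)=\emptyset$ by \Cref{thm props BNSR}\eqref{thm props BNSR htpy subset hmlgy}. Finally, if $M$ is aspherical then $\widetilde M$ is contractible, in particular $n$-connected, so part~(3) of \Cref{thm props BNSR} gives $\Sigma^n(\pi_1 M)=\Sigma^n(M)=\emptyset$ and $\Sigma^n(\pi_1 M;\QQ)=\Sigma^n(M;\QQ)=\emptyset$.

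The only step requiring genuine care --- and, given the machinery already in place, not really an obstacle --- is the first one: one must verify that for a $\mathsf{PD}_{2n}(\QQ)$-complex the cap-product isomorphism really yields the \emph{numerical} identity $b_i^{(2)}=b_{2n-i}^{(2)}$, i.e.\ that tensoring with the one-dimensional dualizing module $D$ does not change $\dim_{\calu G}$ and that the homology and cohomology of the finite free complex $C_\bullet$ have equal $\calu G$-dimensions in each degree. Everything else is a direct assembly of \Cref{prop complexes}, \Cref{Sikorav}, and \Cref{thm props BNSR}.
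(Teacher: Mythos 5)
Your proof is correct and follows essentially the same route as the paper: the $\ell^2$-Euler--Poincar\'e formula combined with $\ell^2$-Poincar\'e duality produces a non-vanishing $\ell^2$-Betti number in some degree $i\le n$, after which you inline \Cref{prop complexes}, Sikorav's Theorem, and openness where the paper simply cites \Cref{thm spaces}, and use \Cref{thm props BNSR}(3) for the aspherical case where the paper invokes \Cref{thm groups}. The only substantive difference is that you justify $b_i^{(2)}=b_{2n-i}^{(2)}$ directly in the presence of a possibly non-trivial dualizing module $D$ (which does work, since twisting by the orientation character preserves von Neumann dimension), whereas the paper sidesteps this by first passing to an orientation double cover and descending via \Cref{BNS finite index}.
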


\begin{proof}
	Let us start with $M$ being a manifold.
After passing to a finite cover we may assume that $M$ is orientable and by \cite[Theorem~III]{KirbySiebenmann1969} we may assume that $M$ has the homotopy type of a finite CW complex.  By \cite[Remark~6.81]{Lueck2002} we have \[\sum_{p\geq 0}(-1)^pb^{(2)}_p(\widetilde M;\pi_1 M)=\chi(M)\neq 0.\]  In particular, there is some $p$ where $b^{(2)}_p(\widetilde M;\pi_1 M)\neq0$.  By Poincar\'e duality \cite[Theorem~1.35(3)]{Lueck2002} we have $b^{(2)}_{2n-k}(M)=b^{(2)}_k(M)$.  In particular, we may assume that $p\leq n$.  Now, \Cref{thm spaces} implies that $\Sigma^p(M;\QQ)=\emptyset$, of which $\Sigma^n(M)$ is a subset by \Cref{thm props BNSR}\eqref{thm props BNSR htpy subset hmlgy}. The argument for a $\mathsf{PD}_{2n}(\QQ)$-complex is identical.

Suppose in addition that $M$ is aspherical.  Then, $M$ is a model for $K(\pi_1 M,1)$ and so $b_p^{(2)}(\pi_1 M)=b_p^{(2)}(\widetilde M;\pi_1M)$.  Now, by \Cref{thm groups} we have $\Sigma^p(\pi_1 M;\QQ)=\emptyset$. The result follows from \Cref{thm props BNSR}\eqref{thm props BNSR htpy subset hmlgy}.
\end{proof}

\begin{corx}\label{cor Singer}
Let $G$ be a $\mathsf{PD}_n(\QQ)$-group and let $k=\lceil n/2\rceil-1$.  If $\Sigma^{k}(G;\QQ)$ is non-empty, then the Singer Conjecture holds for $G$.
\end{corx}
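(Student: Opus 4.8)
The plan is to deduce the statement from \Cref{thm groups} together with Poincar\'e duality for $\ell^2$-Betti numbers. Fix a character $\varphi \in \Sigma^k(G;\QQ)$. Since being of type $\mathsf{FP}_k(\QQ)$ implies being of type $\mathsf{FP}_j(\QQ)$ for every $j \leq k$, we have $\varphi \in \Sigma^j(G;\QQ)$ for all $j \leq k$, so each of these invariants is non-empty. A $\mathsf{PD}_n(\QQ)$-group is of type $\mathsf{FP}(\QQ)$, hence of type $\mathsf{FP}_j(\QQ)$ for every $j$, so the contrapositive of \Cref{thm groups} applies in each degree $j \leq k$ and yields $b_j^{(2)}(G) = 0$ for all $0 \leq j \leq k$.

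Next I would invoke Poincar\'e duality for the $\ell^2$-Betti numbers of a $\mathsf{PD}_n(\QQ)$-group, namely $b_p^{(2)}(G) = b_{n-p}^{(2)}(G)$ for all $p$. This follows from the defining duality isomorphism $H^i(G;\calu G) \cong H_{n-i}(G;\calu G \otimes_\QQ D)$ of a $\mathsf{PD}_n(\QQ)$-group, once one knows that $\dim_{\caln G}$ is insensitive to twisting the coefficient module $\calu G$ by the $1$-dimensional module $D$ (most cleanly: pass to the index-at-most-$2$ kernel $G_0$ of the orientation character, over which $D$ restricts to the trivial module, and use that $\ell^2$-Betti numbers are multiplicative under passage to finite-index subgroups), together with the fact that $\calu G$-homology computes the $\ell^2$-Betti numbers. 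Combining this with the previous step gives $b_p^{(2)}(G) = 0$ for every $p \geq n-k$ as well.

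It remains to check the arithmetic. Since $k = \lceil n/2 \rceil - 1$, we have $n-k = \lfloor n/2 \rfloor + 1$, so the only indices $p$ not covered by the ranges $0 \leq p \leq k$ and $p \geq n-k$ are those with $\lceil n/2 \rceil \leq p \leq \lfloor n/2 \rfloor$. When $n$ is odd this range is empty and $n/2 \notin \ZZ$, so $b_p^{(2)}(G) = 0$ for every $p$; when $n$ is even it consists of the single value $p = n/2$. In either case $b_p^{(2)}(G) = 0$ for all $p \neq n/2$, which is precisely the Singer Conjecture for $G$. The only step needing genuine care is the $\ell^2$-Poincar\'e duality in the presence of the orientation twist; this is standard (and is already implicit in the Remark following \Cref{cor manifolds}), while the remainder of the argument is purely formal.
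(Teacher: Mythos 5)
Your proof is correct and follows essentially the same route as the paper's: non-emptiness of $\Sigma^k(G;\QQ)$ forces $b_j^{(2)}(G)=0$ for all $j\le k$ via the contrapositive of \Cref{thm groups}, and $\ell^2$-Poincar\'e duality then kills all degrees except possibly $n/2$. You are in fact slightly more careful than the paper, which justifies the duality step only by analogy with the manifold case of \Cref{cor manifolds}, whereas you address the orientation twist explicitly.
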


\begin{proof}
Arguing by Poincar\'e duality as in the proof of \Cref{cor manifolds} it suffices to show that
$b^{(2)}_p(G)=0$
 for $p\leq k$.
Now, by hypothesis there exists
$\varphi\in\Sigma^k(G,\QQ)$.
So,
by \Cref{thm groups},
$b^{(2)}_p(G)=0$ for $p\leq k$.
\end{proof}

\section{The Atiyah Conjecture and locally-indicable groups} \label{sec Atiyah}
In this section we prove versions of \Cref{thm groups} and \Cref{thm spaces} in positive characteristic.  This relies on the existence of certain Hughes-free\footnote{No relation to the first author.} skew fields.

Let $R$ be a skew field and let $G$ be a group.  When it exists, we denote by $\cald_{RG}$ the \emph{Hughes-free skew field} of $RG$. We omit the technical definition of a Hughes-free skew field as we do not require it.  However, we note that if it exists it is unique up to an $RG$-algebra isomorphism.

\begin{remark}
It is conjectured that $\cald_{RG}$ exists for any skew field $R$ for all locally indicable groups \cite[Conjecture~1]{Jaikin2021}; it is known to exist for residually \{locally indicable amenable\} groups \cite[Corollary~1.3]{Jaikin2021}.  In particular, $\cald_{RG}$ exists for RFRS groups.
\end{remark}

\begin{defn}
A group $G$ is \emph{agrarian} over a ring $R$ if there exists a skew-field $\cald$ and a monomorphism $\psi\colon RG \to \cald$ of rings.  If $G$ is agrarian over $R$ and $X$ is a space with $\pi_1 X=G$, then we define the \emph{agrarian $\cald$-homology of $X$} to be
\[H_p^\cald(X) =  H_p(C_\bullet(\widetilde{X};R)\otimes_{RG}\cald)\]
where $\cald$ is viewed as an $RG$-$\cald$-bimodule via the embedding $RG\to \cald$.  We also define the \emph{agrarian $\cald$-homology of $G$} to be
\[H_p^\cald(G) =  \Tor^{RG}_p(R,\cald).\]
Since modules over a skew field have a canonical dimension function taking values in $\NN\cup\{\infty\}$ we may define
\[b_p^{\cald}(X) =  \dim_\cald H_p^\cald(X)\quad \text{and} \quad b_p^{\cald}(G) =  \dim_\cald H_p^\cald(G). \]
If $\cald_{RG}$ exists, then (up to $RG$-isomorphism) we have a canonical choice of $\cald$ for each skew-field $R$.
\end{defn}

\begin{ConjAtiyah}
Let $G$ be a torsion-free countable group.  Then, the ring $\cald(\CC G\subset \calu G)$ is a skew field.
\end{ConjAtiyah}

\begin{remark}
If $G$ satisfies the Atiyah Conjecture, then $\cald_{\CC G}$ exists and is isomorphic to $\cald(\CC G\subset \calu G)$.  This applies for instance to torsion-free subgroups of right-angled Artin and Coxeter groups \cite{LinnellOkunSchick2012}, torsion-free virtually special groups \cite{Schreve2014}, locally indicable groups \cite{JaikinLopez2020}, and more \cite{Linnel1993,FarkasLinnell2006,Jaikin2019}.
\end{remark}

\begin{thm}\label{thm char p}
Let $R$ be a skew-field and let $G$ be a group such that $\cald_{RG}$ exists.  Let $\varphi\in S(G)$.
\begin{enumerate}
\item Suppose that $G$ is of type $\mathsf{FP}_n(R)$.  If $b_n^{\cald_{RG}}(G)\neq 0$, then $\Sigma^n(G;R)=\emptyset$.\label{thm agrarian groups}
\item Let $X$ be a connected CW complex with finite $n$-skeleton.  If $b_n^{\cald_{RG}}({X})\neq 0$, then $\Sigma^n(X;R)=\emptyset$. \label{thm agrarian spaces}
\end{enumerate}
In particular, if $G$ satisfies the Atiyah Conjecture, then statements \eqref{thm agrarian groups} and \eqref{thm agrarian spaces} hold with $R=\CC$ and with $\ell^2$-Betti numbers replacing $\cald_{\CC G}$-agrarian Betti numbers.
\end{thm}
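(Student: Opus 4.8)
The plan is to prove the Hughes-free analogue of \cref{prop complexes} and then rerun the proofs of \cref{thm groups,thm spaces} verbatim. Take $C_\bullet$ to be a free $RG$-resolution of $R$ that is finitely generated up to degree $n$ in case~\eqref{thm agrarian groups}, and $C_\bullet=C_\bullet(\widetilde X;R)$ in case~\eqref{thm agrarian spaces}; in either case $C_\bullet$ is a complex of free $RG$-modules, finitely generated up to degree $n$, with $C_k=0$ for $k<0$. The claim to prove is: \emph{for every epimorphism $\varphi\colon G\onto\Z$, if $H_j(C_\bullet\otimes_{RG}\nov R G\varphi)=0$ for all $j\leqslant n$, then $H_j(C_\bullet\otimes_{RG}\cald_{RG})=0$ for all $j\leqslant n$.} Granting this, suppose $b_n^{\cald_{RG}}\neq0$, i.e.\ $H_n(C_\bullet\otimes_{RG}\cald_{RG})\neq0$ (this group is $\Tor_n^{RG}(R,\cald_{RG})$ in case~\eqref{thm agrarian groups} and $H_n^{\cald_{RG}}(X)$ in case~\eqref{thm agrarian spaces}). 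By openness (\cref{thm props BNSR}) and scale-invariance, if $\Sigma^n(G;R)$ (resp.\ $\Sigma^n(X;R)$) were non-empty it would contain an integral character $\varphi\colon G\onto\Z$; then Sikorav's Theorem (\cref{Sikorav} and its analogue for groups) gives $H_i(C_\bullet\otimes_{RG}\nov R G\varphi)=0$ for all $i\leqslant n$, and the claim forces $H_n(C_\bullet\otimes_{RG}\cald_{RG})=0$, a contradiction. Hence $\Sigma^n(G;R)=\emptyset$ (resp.\ $\Sigma^n(X;R)=\emptyset$).

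For the claim I would run the ``ideal world'' argument of the introduction, which is legitimate here because $\cald_{RK}$ is a genuine skew field. Put $K=\ker\varphi$ and pick $t\in G$ with $\varphi(t)$ generating $\im\varphi=\Z$, so $G=K\rtimes\langle t\rangle$ and $RG=RK[t^{\pm1}]$ (twisted by conjugation by $t$, throughout; cf.\ \cref{rem twisted RHZ}). Since $\cald_{RG}$ exists, so does $\cald_{RK}$, and it may be identified with the division closure of $RK$ in $\cald_{RG}$, so in particular it is a skew field (being division-closed in one); and by the defining property of Hughes-free skew fields — applied to the finitely generated subgroups of $K$ if $K$ itself is infinitely generated — the twisted Laurent polynomial ring $\cald_{RK}[t^{\pm1}]$, with $t$ acting by conjugation, embeds into $\cald_{RG}$. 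As a skew Laurent polynomial ring over a skew field, $\cald_{RK}[t^{\pm1}]$ is a principal ideal domain, hence an Ore domain; let $\cale$ be its Ore field of fractions. Every non-zero element of $\cald_{RK}[t^{\pm1}]$ is invertible in the skew field $\cald_{RG}$, so the universal property of Ore localisation extends the embedding to an injective ring homomorphism $\cale\hookrightarrow\cald_{RG}$ restricting to the inclusion on $RG$. On the other hand, since $\cald_{RK}$ is a skew field the expansion map of \cref{sec ring hom} is elementary here — the inverse of a leading term is an honest inverse, compare \cref{ex for invertibles} — and shows that every non-zero element of $\cald_{RK}[t^{\pm1}]$ is also invertible in the twisted Laurent power series ring $\cald_{RK}[t^{\pm1}\rrbracket$; the universal property thus also provides an injective ring homomorphism $\cale\hookrightarrow\cald_{RK}[t^{\pm1}\rrbracket$, restricting on $RG$ to the tautological inclusion of twisted Laurent polynomials into twisted Laurent power series. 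Collecting the terms of an element of $\nov R G\varphi$ by powers of $t$ (each coefficient being a finite sum of elements of $K$, as $\varphi$-values are bounded below) identifies $\nov R G\varphi$ with $RK[t^{\pm1}\rrbracket$, which sits inside $\cald_{RK}[t^{\pm1}\rrbracket$ compatibly with $RG$. Working inside $\cald_{RK}[t^{\pm1}\rrbracket$, which is a domain, uniqueness of inverses shows that $\cale\cap\nov R G\varphi$ is a division-closed subring of $\nov R G\varphi$ containing $RG$, whence $\cald(RG\subset\nov R G\varphi)\subseteq\cale$; composing with $\cale\hookrightarrow\cald_{RG}$ gives a ring embedding $\cald(RG\subset\nov R G\varphi)\hookrightarrow\cald_{RG}$ over $RG$. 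Finally, \cref{chain contractions} turns the hypothesised vanishing over $\nov R G\varphi$ into a chain contraction of $C_\bullet\otimes_{RG}\cald(RG\subset\nov R G\varphi)$ in degrees $\leqslant n$, and tensoring it up along this embedding gives $H_j(C_\bullet\otimes_{RG}\cald_{RG})=0$ for $j\leqslant n$, proving the claim. The spaces case~\eqref{thm agrarian spaces} needs no separate argument, exactly as \cref{thm spaces} was deduced from \cref{thm groups}.

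For the ``in particular'' clause: if $G$ satisfies the Atiyah Conjecture then $\cald(\CC G\subset\calu G)$ is a skew field, so (by the remark preceding \cref{thm char p}) $\cald_{\CC G}$ exists and is isomorphic to it. Then $\calu G$, as a module over the skew field $\cald_{\CC G}$, is free and hence flat, so $H_n(C_\bullet\otimes_{\CC G}\calu G)\cong H_n(C_\bullet\otimes_{\CC G}\cald_{\CC G})\otimes_{\cald_{\CC G}}\calu G$; since $b_n^{(2)}=\dim_{\calu G}H_n(C_\bullet\otimes_{\CC G}\calu G)$ (cf.\ \cref{sec l2Bno}) and induction from the sub-skew-field $\cald_{\CC G}$ to $\calu G$ preserves dimension, we get $b_n^{(2)}=b_n^{\cald_{\CC G}}$, both for $G$ and for $X$. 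Thus the hypothesis $b_n^{(2)}\neq0$ is precisely the hypothesis of \eqref{thm agrarian groups} (resp.\ \eqref{thm agrarian spaces}) with $R=\CC$, and the claim follows.

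The substance sits in the claim, and within it in the identification of $\cale=\cald_{RK}(t^{\pm1})$ as a ring lying simultaneously inside $\cald_{RG}$ and inside $\cald_{RK}[t^{\pm1}\rrbracket$ and containing $\cald(RG\subset\nov R G\varphi)$: this is the Hughes-free counterpart of the ring $\wrat(K,t)$ of weakly rational elements, and the whole reason for working with Hughes-free skew fields is that $\cald_{RK}$ being an honest skew field makes $\cald_{RK}[t^{\pm1}]$ an honest Ore domain (indeed a PID), so that $\cale$ can be taken to be a genuine field of rational functions and the approximate-Ore machinery of \cref{sec ring hom} is avoided entirely. The step I expect to be most delicate is the embedding $\cald_{RK}[t^{\pm1}]\hookrightarrow\cald_{RG}$: this is exactly where the standing hypothesis that $\cald_{RG}$ exists as a Hughes-free skew field is used, and for infinitely generated $K$ it requires the compatibility of Hughes-free skew fields with arbitrary subgroups; by comparison, the bookkeeping (that the two homomorphisms out of $\cale$ agree on $RG$, that $\cale\cap\nov R G\varphi$ is division-closed, and the tensoring-up of chain contractions) is routine given \cref{sec passing vanishing}.
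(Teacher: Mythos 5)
Your proposal is correct, and the overall skeleton (reduce to integral characters by openness, apply Sikorav, prove a Hughes-free analogue of \cref{prop complexes}) matches the paper; but the mechanism you use to transfer the vanishing is genuinely different. The paper works with the single skew field $\mathbb K=\cald_{RK}[t^{\pm1}\rrbracket$ of twisted Laurent \emph{series}: it observes that $\mathbb K$ contains both $\nov{R}{G}{\varphi}=RK[t^{\pm1}\rrbracket$ and $\cald_{RG}$, pushes the chain contraction from the Novikov ring up into $\mathbb K$ directly, and then descends to $\cald_{RG}$ via faithful flatness of the skew-field extension $\cald_{RG}\subseteq\mathbb K$ ($H_j(C_\bullet\otimes_{RG}\cald_{RG})\otimes_{\cald_{RG}}\mathbb K=H_j(C_\bullet\otimes_{RG}\mathbb K)=0$). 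You instead go \emph{down} from the Novikov ring to $\cald(RG\subset\nov{R}{G}{\varphi})$ via \cref{chain contractions}, interpose the Ore field of fractions $\cale$ of $\cald_{RK}[t^{\pm1}]$ as a ring sitting inside both $\cald_{RK}[t^{\pm1}\rrbracket$ and $\cald_{RG}$ and containing that division closure, and then tensor up. Both arguments rest on the same Hughes-freeness input (that $\cald_{RK}$ is a skew field and that $1,t,t^2,\dots$ are $\cald_{RK}$-independent in $\cald_{RG}$), and both are valid. The paper's route is shorter and does not need \cref{chain contractions} at all for this theorem; yours buys a cleaner parallel with the architecture of \cref{prop complexes} (with $\cale$ playing the role of $\wrat(K,t)$) and avoids the flatness-descent step, at the cost of invoking the heavier machinery of \Cref{sec passing vanishing}. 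Your treatment of the ``in particular'' clause, identifying $b_n^{(2)}$ with $b_n^{\cald_{\CC G}}$ via flatness of $\calu G$ over $\cald_{\CC G}$ and dimension preservation, is correct and slightly more explicit than the paper, which leaves this identification implicit.
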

\begin{proof}
We proceed as in \cite[Theorem~5.10]{HughesKielak2022}.  Let $K=\ker\varphi$.  Let $\mathbb K$ be the skew-field of twisted Laurent series with variable $t$ and coefficients in the skew field $\cald_{RK}$;  here $t$ is an element of $G$ with $\varphi(t)=1$ and the twisting extends the conjugation action of $t$ on $K$.  This is possible since $\cald_{RK}$ is Hughes-free (see \cite{Jaikin2021} for an explanation).

We have two embeddings, firstly $\nov{R}{G}{\varphi}$ embeds into $\mathbb K$, and secondly $\cald_{RG}$ embeds into $\mathbb K$.  To see the first embedding, $\nov{R}{G}{\varphi}$ may be viewed as a ring of twisted Laurent series in $t$ with coefficients in $RH$.  The second embedding exists because $\cald_{RG}$ is Hughes-free and so isomorphic as an $RG$-module to $\cald(RK[t^{\pm1}]\subset\mathbb K)$ where we identify $RG$ with $RK[t^{\pm1}]$.  In particular, we may view $\mathbb{K}$ as a $\cald_{RG}$-module.

\begin{claim}\label{claim agrarian}
Let $C_\bullet$ be a chain complex of  finitely generated free $RG$-modules such that $C_i=0$ for $i<0$.  If $H_j(C_\bullet\otimes_{RG}\nov{R}{G}{\varphi})=0$ for $j\leq n$, then $H_j(C_\bullet\otimes_{RG}\cald_{RG})=0$.
\end{claim}
\begin{claimproof}[Proof of \Cref{claim agrarian}]
Since $H_j(C_\bullet\otimes_{RG}\nov{R}{G}{\varphi})=0$ for $j\leq n$ and $\nov{R}{G}{\varphi}\subset\mathbb K$ it follows using chain contractions that $H_j(C_\bullet\otimes_{RG}\mathbb{K})=0$ for $j\leq n$.  Now, $\mathbb K$ and $\cald_{RG}$ are skew-fields, and so
\[H_j(C_\bullet\otimes_{RG}\cald_{RG}) \otimes_{\mathbb K} \mathbb K =H_j(C_\bullet\otimes_{RG}  \mathbb K)=0,\]
 forcing $H_j(C_\bullet\otimes_{RG}\cald_{RG})=0$ for $j\leq n$, as claimed.
\end{claimproof}
The theorem is now proved either by taking $C_\bullet$ in the claim to be a free resolution of $G$, finitely generated up to degree $n$, in the case of \eqref{thm agrarian groups}; or taking $C_\bullet$ to be $C_\bullet(\widetilde X;R)$, viewed as a chain complex of free $RG$-modules, in the case of \eqref{thm agrarian spaces}.  Finally, the result follows from the appropriate version of Sikorav's Theorem and openness of the BNSR invariants (\Cref{thm props BNSR}\eqref{thm props BNSR open}).
\end{proof}

Finally, we remark that characteristic $p$ versions of \Cref{cor manifolds} and \Cref{cor Singer} can be formulated and proved for groups $G$ where $\cald_{\FF_p G}$ exists by almost verbatim arguments -- with the exception of substituting \Cref{thm spaces,thm groups} with \Cref{thm char p}.

\section{Some examples}\label{sec ex}
In this section we detail a number of examples that both complement results already in the literature and might be of independent interest.

An \emph{elementarily free group} is a group with the same first order theory as a free group. Every finitely generated such group is of type $\mathsf F$ (see \cite{BridsonTweedaleWilton2007}).  A \emph{poly-elementarily-free group} of length $n$ is a group $G$ which admits a subnormal filtration $\{1\}=N_0\triangleleft N_1\triangleleft\dots\triangleleft N_n=G$ with $N_{i}/N_{i-1}$ isomorphic to a finitely generated elementarily free group.  Note that poly-$\{$finitely generated free or surface$\}$ groups are poly-elementarily-free.

\begin{lemma}\label{Bno poly elefree}
Let $G$ be a poly-elementarily-free group of length $n$.  If $\chi(G)\neq 0$, then $b_p^{(2)}(G)=0$ for $p\neq n$ and $b_n^{(2)}(G)=|\chi(G)|$.
\end{lemma}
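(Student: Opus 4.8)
The plan is to induct on the length $n$, at each stage proving the stronger assertion that the $\ell^2$-homology $H_\bullet^G(EG;\caln G)$ is concentrated in degree $n$; the value $b_n^{(2)}(G)=|\chi(G)|$ will then be forced by the $\ell^2$-Euler characteristic formula. Before starting, recall three standard facts about a finitely generated elementarily free group $F$: it is of type $\mathsf F$ \cite{BridsonTweedaleWilton2007}, it has $\cd F\le 2$, and it is a constructible limit group, i.e.\ an iterated amalgam and HNN extension over infinite cyclic subgroups whose building blocks are finitely generated free and surface groups. Running the Mayer--Vietoris sequence for $\ell^2$-homology down this hierarchy, and using that finitely generated free groups, surface groups and infinite cyclic groups all have $b_p^{(2)}=0$ for $p\ge 2$, one gets $b_p^{(2)}(F)=0$ for $p\ge 2$; since $F$ is infinite we also have $b_0^{(2)}(F)=0$, so (as $F$ is of type $\mathsf F$) $\chi(F)=-b_1^{(2)}(F)\le 0$. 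In particular, whenever $\chi(F)\ne 0$ we have $\chi(F)<0$ and $b_1^{(2)}(F)=|\chi(F)|$, and the $\ell^2$-homology of $F$ is concentrated in degree $1$.

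Now to the induction. The base case $n=0$ (the trivial group) is clear. For $n\ge 1$ write $1\to H\to G\to F\to 1$ with $H=N_{n-1}$ poly-elementarily-free of length $n-1$ and $F=N_n/N_{n-1}$ finitely generated elementarily free (for $n=1$, $H$ is trivial). As $H$ and $F$ are of type $\mathsf F$, so is $G$, and the Euler characteristic is multiplicative, so $\chi(G)=\chi(H)\chi(F)$ and hence $\chi(H),\chi(F)\ne 0$; by the preliminary remark $\chi(F)<0$ and the $\ell^2$-homology of $F$ is concentrated in degree $1$, while by the inductive hypothesis that of $H$ is concentrated in degree $n-1$. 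Since $H$ is of type $\mathsf{FP}$, each $H_t(H;\calu H)$ is a finitely presented module over the von Neumann regular ring $\calu H$, hence finitely generated projective, hence (having von Neumann dimension $b_t^{(2)}(H)$) zero for $t\ne n-1$; it follows that $N\coloneqq H_{n-1}(H;\calu G)\cong H_{n-1}(H;\calu H)\otimes_{\calu H}\calu G$ is a finitely generated projective $\calu G$-module. Feeding this into the Lyndon--Hochschild--Serre spectral sequence for $\ell^2$-homology \cite{Lueck2002}, $E^2_{s,t}=H_s\big(F;H_t(H;\calu G)\big)\Rightarrow H_{s+t}(G;\calu G)$, the $E^2$-page is concentrated in the single row $t=n-1$, so the spectral sequence degenerates and $b_{(n-1)+s}^{(2)}(G)=\dim_{\caln G}H_s(F;N)$ for all $s\ge 0$.

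It remains to show $\dim_{\caln G}H_s(F;N)=0$ for $s\ne 1$; the subtle point is that the $F$-action on $N$ is twisted, so these are not literally $\ell^2$-homology groups of $F$. The key observation is that for every infinite-order $g\in F$ the operator $\alpha_g-\id$ on $N$ (where $\alpha_g$ denotes the $g$-action) is invertible: letting $\widetilde L$ be the preimage of $\langle g\rangle$ in $G$, the split extension $1\to H\to\widetilde L\to\ZZ\to 1$ presents $\widetilde L$ as an algebraic mapping torus with $\mathsf{FP}$ fibre $H$, whence $b_p^{(2)}(\widetilde L)=0$ for all $p$ \cite{Lueck2002}; the two-column spectral sequence of this extension degenerates and identifies $\coker(\alpha_g-\id)$ with $H_{n-1}(\widetilde L;\calu G)$ and $\ker(\alpha_g-\id)$ with $H_{n}(\widetilde L;\calu G)$, so both have $\caln G$-dimension $0$, and since $N$ is finitely generated projective over the von Neumann regular ring $\calu G$ both modules are finitely generated projective, hence zero. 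Granting this: $H_0(F;N)=N_F$ is a quotient of $N_{\langle g\rangle}=\coker(\alpha_g-\id)=0$, so $b_{n-1}^{(2)}(G)=0$; for $s\ge 3$, $H_s(F;N)=0$ since $\cd F\le 2$; and for $s=2$ one runs Mayer--Vietoris down the cyclic hierarchy of $F$, where the infinite cyclic edge groups contribute only $H_1(\ZZ;N)=\ker(\alpha_h-\id)=0$ and $H_2(\ZZ;N)=0$ for the relevant infinite-order $h$, the free vertex groups contribute nothing in degree $\ge 2$, and a surface vertex group $S$ contributes $H_2(S;N)\cong N^S\subseteq\ker(\alpha_h-\id)=0$ (by Poincaré duality) and $H_{\ge 3}(S;N)=0$. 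Hence $b_p^{(2)}(G)=0$ for $p\ne n$.

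Finally, since $G$ is of type $\mathsf F$ and $\ell^2$-concentrated in degree $n$, $b_n^{(2)}(G)=(-1)^n\chi(G)=(-1)^n\prod_{i=1}^n\chi(N_i/N_{i-1})=\prod_{i=1}^n|\chi(N_i/N_{i-1})|=|\chi(G)|$, using that each $\chi(N_i/N_{i-1})$ is negative by the case $n=1$ applied to the quotient (which is legitimate as $\chi(N_i/N_{i-1})\ne 0$ since $\chi(G)\ne 0$). The main obstacle is the twisted-coefficient vanishing of the previous paragraph — in particular establishing that $\alpha_g-\id$ is invertible on $N$; once that is in hand the rest is bookkeeping with the $\ell^2$-Euler characteristic and the collapsing spectral sequences.
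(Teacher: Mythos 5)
Your proof is correct, but it follows a genuinely different route from the paper's. The paper argues degree by degree and by citation: the input that each elementarily free quotient $G_i$ has $b_p^{(2)}(G_i)=0$ for $p\neq 1$ is quoted from \cite{BridsonKochloukova2017} rather than rederived by Mayer--Vietoris; vanishing of $b_p^{(2)}(N_i)$ for $p<i$ comes from an inductive application of L\"uck's extension theorem \cite[Theorem~6.67]{Lueck2002}; and vanishing for $p>i$ comes from the fact that elementarily free groups are measure equivalent to free groups \cite[Theorem~B]{BridsonTweedaleWilton2007} combined with Sauer--Thom's vanishing theorem \cite[Theorem~1.10]{SauerThom2010}. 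You instead prove the stronger, module-level statement that $H_t(H;\calu H)$ is concentrated in a single degree and feed it into the Lyndon--Hochschild--Serre spectral sequence, the genuinely new ingredient being the invertibility of $\alpha_g-\id$ on $N$, extracted from L\"uck's mapping-torus theorem applied to the preimages of cyclic subgroups. This buys independence from the measure-equivalence machinery and gives concentration as modules rather than just as dimensions; the price is that you must invoke the structure theory of elementarily free groups directly (the cyclic hierarchy with free and surface building blocks, $\cd F\leq 2$, torsion-freeness) together with Poincar\'e duality with twisted coefficients for the surface pieces, none of which the paper's proof needs explicitly. If you write this up, make explicit that the identification $H_t(H;\calu G)\cong H_t(H;\calu H)\otimes_{\calu H}\calu G$ and the equality $\dim_{\caln G}\big(M\otimes_{\calu \widetilde L}\calu G\big)=\dim_{\caln \widetilde L}M$ rest on flatness and dimension-preservation of induction for subgroups, and note that your repeated use of ``dimension zero implies zero'' is legitimate only because you verify finite generation and projectivity over the von Neumann regular ring each time --- which is precisely why the module-level concentration has to be carried through the induction.
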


\begin{proof}
	The case $n=0$ is easily dealt with, and so we may assume that $n>0$.
	
	Let $(N_i)_i$ be a subnormal chain with every $N_i / N_{i-1} = G_i$ non-trivial and finitely generated elementarily free, and with $N_0 = \{1\}$ and $N_n = G$.
Note that each $N_i$ is a group of type $\mathsf{F}$ since it is an extension of such groups.  Since $\chi(G)\neq 0$ and Euler characteristic is multiplicative over short exact sequences of groups of type $\mathsf{F}$, it follows that $\chi(N_i)\neq0$ and $\chi(G_i)\neq 0$ for every $i$.  By \cite{BridsonKochloukova2017} we have $b^{(2}_p(G_i)=0$ unless $p=1$, in which case the first $\ell^2$-Betti number of $G_i$ may be positive.  Since $\chi(G_i)\neq 0$ we have that $b^{(2)}_1(G_i)>0$.

An inductive application of \cite[Theorem~6.67]{Lueck2002} yields that for every $i$ and every $p<i$,
\[b^{(2)}_p(N_i)=0.\]
   By \cite[Theorem~B]{BridsonTweedaleWilton2007}, an elementarily free group is measure equivalent to a free group.  Now, inductively applying \cite[Theorem 1.10]{SauerThom2010} shows that for every $i$ and every $p>i$ we have $b^{(2)}_i(N_i)=0$. Thus, $b_i^{(2)}(N_i)=|{\chi(N_i)}|$, and hence taking $i=n$ we obtain $b_n^{(2)}(G)=|{\chi(G)}|$.
\end{proof}

The following result generalises \cite[Proposition~1.5]{KochloukovaVidussi2022}, dealing with free-by-free or surface-by-surface groups, and the first part of \cite[Theorem~6.1]{KrophollerWalsh2019}, dealing with $\{$free group of rank $2$$\}$-by-free groups.

\begin{thm}\label{polycule}
Let $G$ be a poly-elementarily-free group of length $n$.  If $\chi(G)\neq 0$, then $\Sigma^n(G;\QQ)=\Sigma^n(G)=\emptyset$.
\end{thm}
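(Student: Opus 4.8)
The plan is to deduce this immediately from \Cref{Bno poly elefree} and \Cref{thm groups}. First I would record the finiteness property: since $G$ is built from finitely generated elementarily free groups by iterated extensions, and finitely generated elementarily free groups are of type $\mathsf F$ (as noted before \Cref{Bno poly elefree}), an inductive application of the fact that an extension of type $\mathsf F$ groups is again of type $\mathsf F$ shows that $G$ is of type $\mathsf F$, hence in particular of type $\mathsf{FP}_n(\QQ)$. This is exactly the hypothesis needed to invoke \Cref{thm groups}.

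Next I would apply \Cref{Bno poly elefree}: the assumption $\chi(G) \neq 0$ gives $b_n^{(2)}(G) = |\chi(G)| \neq 0$, where $n$ is the length of $G$. Now \Cref{thm groups} applies directly and yields $\Sigma^n(G;\QQ) = \emptyset$.

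Finally, to get the homotopic invariant, I would use \Cref{thm props BNSR}\eqref{thm props BNSR htpy subset hmlgy} with $X = BG$, which gives the inclusion $\Sigma^n(G) = \Sigma^n(BG) \subseteq \Sigma^n(BG;\QQ) = \Sigma^n(G;\QQ) = \emptyset$, so $\Sigma^n(G) = \emptyset$ as well.

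There is essentially no obstacle here; the statement is a formal consequence of the two cited results, and the only point requiring a word of justification is the verification that $G$ is of type $\mathsf{FP}_n(\QQ)$ so that \Cref{thm groups} is applicable. The real content has already been carried out in \Cref{Bno poly elefree} (the $\ell^2$-Betti number computation, which is where the measure-equivalence and Euler characteristic arguments live) and in \Cref{thm groups} itself.

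\begin{proof}
Since finitely generated elementarily free groups are of type $\mathsf{F}$, and an extension of a group of type $\mathsf{F}$ by a group of type $\mathsf{F}$ is again of type $\mathsf{F}$, an easy induction along the subnormal filtration exhibiting $G$ as poly-elementarily-free shows that $G$ is of type $\mathsf{F}$; in particular $G$ is of type $\mathsf{FP}_n(\QQ)$. By \Cref{Bno poly elefree}, the hypothesis $\chi(G) \neq 0$ gives $b_n^{(2)}(G) = |\chi(G)| \neq 0$. Hence \Cref{thm groups} applies and yields $\Sigma^n(G;\QQ) = \emptyset$. Finally, applying \Cref{thm props BNSR}\eqref{thm props BNSR htpy subset hmlgy} to $X = BG$ gives
\[
\Sigma^n(G) = \Sigma^n(BG) \subseteq \Sigma^n(BG;\QQ) = \Sigma^n(G;\QQ) = \emptyset,
\]
so $\Sigma^n(G) = \emptyset$ as well.
\end{proof}
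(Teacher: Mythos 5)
Your proof is correct and follows the same route as the paper, which deduces the statement directly from \Cref{Bno poly elefree} and \Cref{thm groups}. The extra details you supply (the type $\mathsf F$ verification via iterated extensions, and the inclusion $\Sigma^n(G)\subseteq\Sigma^n(G;\QQ)$ from \Cref{thm props BNSR}) are exactly the implicit steps the paper relies on.
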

\begin{proof}
The result now follows from \Cref{Bno poly elefree} and \Cref{thm groups}.
\end{proof}

We remark that the conclusions of \cref{Bno poly elefree,polycule} remain valid if $G$ is a poly-$\calx$ group where $\calx$ is the class of groups of type $\mathsf{FP}$ that are measure equivalent to a free group.

\begin{example}[Pure mapping class group of a punctured sphere]
Let $m\geq 3$, and let $S_m$ denote the $m$-punctured $2$-sphere.  Recall that the \emph{pure mapping class group} $\Gamma_m\coloneqq\mathrm{PMCG}(S_m)$ of $S_m$ is the group of mapping classes of $S_m$ which fix the $m$-punctures pointwise.  It is well known (see e.g.\ \cite[Section~9.3]{FarbMargalit2011}) that $\Gamma_m$ is poly-free of length $m-2$ and each subnormal quotient in the poly-free filtration is non-abelian.  Hence, $\chi(\Gamma_m)\neq 0$.  We have verified the hypotheses of \Cref{polycule} and conclude that $\Sigma^{m-2}(\Gamma_m;\QQ)=\Sigma^{m-2}(\Gamma_m)=\emptyset$.
\end{example}

\begin{example}[Real and complex hyperbolic lattices]
Let $\Gamma$ be a lattice in $\SO(2n,1)$ or $\SU(n,1)$.  By \cite{Dodziuk1979} (see also \cite{Borel1985}), we have  $b^{(2)}_p(\Gamma)=0$ except when $p=n$, in which case $b^{(2)}_n(\Gamma)\neq0$.  By \Cref{thm groups}, we have $\Sigma^n(\Gamma)=\Sigma^n(\Gamma;\QQ)=\emptyset$.  This result was already known for `simplest type lattices' in $\SU(n,1)$, see \cite{IsenrichPy2022}.
\end{example}

\bibliographystyle{alpha}
\bibliography{refs.bib}

\begin{thebibliography}{JZL{\'A}20}

\bibitem[Bie07]{Bieri2007}
Robert Bieri.
\newblock Deficiency and the geometric invariants of a group.
\newblock {\em J. Pure Appl. Algebra}, 208(3):951--959, 2007.
\newblock With an appendix by Pascal Schweitzer.

\bibitem[BK17]{BridsonKochloukova2017}
Martin~R. Bridson and Dessislava~H. Kochloukova.
\newblock Volume gradients and homology in towers of residually-free groups.
\newblock {\em Math. Ann.}, 367(3-4):1007--1045, 2017.

\bibitem[BNS87]{BieriNeumannStrebel1987}
Robert Bieri, Walter~D. Neumann, and Ralph Strebel.
\newblock A geometric invariant of discrete groups.
\newblock {\em Invent. Math.}, 90(3):451--477, 1987.

\bibitem[Bor85]{Borel1985}
A.~Borel.
\newblock The {$L^2$}-cohomology of negatively curved {R}iemannian symmetric
  spaces.
\newblock {\em Ann. Acad. Sci. Fenn. Ser. A I Math.}, 10:95--105, 1985.

\bibitem[BR88]{BieriRenz1988}
Robert Bieri and Burkhardt Renz.
\newblock Valuations on free resolutions and higher geometric invariants of
  groups.
\newblock {\em Comment. Math. Helv.}, 63(3):464--497, 1988.

\bibitem[Bro87]{Brown1987}
Kenneth~S. Brown.
\newblock Trees, valuations, and the {B}ieri-{N}eumann-{S}trebel invariant.
\newblock {\em Invent. Math.}, 90(3):479--504, 1987.

\bibitem[BTW07]{BridsonTweedaleWilton2007}
Martin~R. Bridson, Michael Tweedale, and Henry Wilton.
\newblock Limit groups, positive-genus towers and measure-equivalence.
\newblock {\em Ergodic Theory Dynam. Systems}, 27(3):703--712, 2007.

\bibitem[Dav98a]{Davis1998}
Michael~W. Davis.
\newblock The cohomology of a {C}oxeter group with group ring coefficients.
\newblock {\em Duke Math. J.}, 91(2):297--314, 1998.

\bibitem[Dav98b]{Davis1998c}
Michael~W. Davis.
\newblock Correction to: ``{T}he cohomology of a {C}oxeter group with group
  ring coefficients'' [{D}uke {M}ath. {J}. {\bf 91} (1998), no. 2, 297--314;
  {MR}1600586 (99b:20067)].
\newblock {\em Duke Math. J.}, 95(3):635, 1998.

\bibitem[Dod79]{Dodziuk1979}
Jozef Dodziuk.
\newblock {$L^{2}$} harmonic forms on rotationally symmetric {R}iemannian
  manifolds.
\newblock {\em Proc. Amer. Math. Soc.}, 77(3):395--400, 1979.

\bibitem[Far00]{Farber2000}
Michael Farber.
\newblock von {N}eumann {B}etti numbers and {N}ovikov type inequalities.
\newblock {\em Proc. Amer. Math. Soc.}, 128(9):2819--2827, 2000.

\bibitem[FGS10]{FarberGeogheganSchuetz2010}
M.~Farber, R.~Geoghegan, and D.~Sch{\"u}tz.
\newblock Closed 1-forms in topology and geometric group theory.
\newblock {\em Russ. Math. Surv.}, 65(1):173--174, 2010.

\bibitem[Fis21]{Fisher2021}
Sam~P. Fisher.
\newblock Improved algebraic fibrings, 2021.
\newblock arXiv:2112.00397 [math.GR].

\bibitem[FL06]{FarkasLinnell2006}
Daniel~R. Farkas and Peter~A. Linnell.
\newblock Congruence subgroups and the {Atiyah} conjecture.
\newblock In {\em Groups, rings and algebras. A conference in honor of Donald
  S. Passman, Madison, WI, USA, June 10--12, 2005}, pages 89--102. Providence,
  RI: American Mathematical Society (AMS), 2006.

\bibitem[FM11]{FarbMargalit2011}
Benson Farb and Dan Margalit.
\newblock {\em A primer on mapping class groups}, volume~49 of {\em Princeton
  Math. Ser.}
\newblock Princeton, NJ: Princeton University Press, 2011.

\bibitem[FV21]{FriedlVidussi2021}
Stefan Friedl and Stefano Vidussi.
\newblock Virtual algebraic fibrations of {K{\"a}hler} groups.
\newblock {\em Nagoya Math. J.}, 243:42--60, 2021.

\bibitem[HK22]{HughesKielak2022}
Sam Hughes and Dawid Kielak.
\newblock Profinite rigidity of fibring.
\newblock June 2022.
\newblock arXiv:2206.11347 [math.GR].

\bibitem[JZ19]{Jaikin2019}
Andrei Jaikin-Zapirain.
\newblock The base change in the {Atiyah} and the {L{\"u}ck} approximation
  conjectures.
\newblock {\em Geom. Funct. Anal.}, 29(2):464--538, 2019.

\bibitem[JZ21]{Jaikin2021}
Andrei Jaikin-Zapirain.
\newblock The universality of {H}ughes-free division rings.
\newblock {\em Selecta Math. (N.S.)}, 27(4):Paper No. 74, 33, 2021.

\bibitem[JZL{\'A}20]{JaikinLopez2020}
Andrei Jaikin-Zapirain and Diego L{\'o}pez-{\'A}lvarez.
\newblock The strong {Atiyah} and {L{\"u}ck} approximation conjectures for
  one-relator groups.
\newblock {\em Math. Ann.}, 376(3-4):1741--1793, 2020.

\bibitem[Kie20]{Kielak2020RFRS}
Dawid Kielak.
\newblock Residually finite rationally solvable groups and virtual fibring.
\newblock {\em J. Amer. Math. Soc.}, 33(2):451--486, 2020.

\bibitem[KLS20]{KrophollerLearySoroko2020}
Robert~P. Kropholler, Ian~J. Leary, and Ignat Soroko.
\newblock Uncountably many quasi-isometry classes of groups of type {$FP$}.
\newblock {\em Amer. J. Math.}, 142(6):1931--1944, 2020.

\bibitem[KS69]{KirbySiebenmann1969}
R.~C. Kirby and L.~C. Siebenmann.
\newblock On the triangulation of manifolds and the {H}auptvermutung.
\newblock {\em Bull. Amer. Math. Soc.}, 75:742--749, 1969.

\bibitem[KV22]{KochloukovaVidussi2022}
Dessislava~H. Kochloukova and Stefano Vidussi.
\newblock Higher dimensional algebraic fiberings of group extensions, 2022.

\bibitem[KW19]{KrophollerWalsh2019}
Robert Kropholler and Genevieve Walsh.
\newblock Incoherence and fibering of many free-by-free groups, 2019.

\bibitem[Lea18]{Leary2018}
Ian~J. Leary.
\newblock Uncountably many groups of type {$FP$}.
\newblock {\em Proc. Lond. Math. Soc. (3)}, 117(2):246--276, 2018.

\bibitem[Lin92]{Linnell1992}
Peter~A. Linnell.
\newblock Zero divisors and {$L^2(G)$}.
\newblock {\em Comptes Rendus de l'Acad\'{e}mie des Sciences. S\'{e}rie I.
  Math\'{e}matique}, 315(1):49--53, 1992.

\bibitem[Lin93]{Linnel1993}
Peter~A. Linnell.
\newblock Division rings and group von {Neumann} algebras.
\newblock {\em Forum Math.}, 5(6):561--576, 1993.

\bibitem[LOS12]{LinnellOkunSchick2012}
Peter Linnell, Boris Okun, and Thomas Schick.
\newblock The strong {Atiyah} conjecture for right-angled {Artin} and {Coxeter}
  groups.
\newblock {\em Geom. Dedicata}, 158:261--266, 2012.

\bibitem[LP22]{IsenrichPy2022}
Claudio {Llosa Isenrich} and Pierre Py.
\newblock Subgroups of hyperbolic groups, finiteness properties and complex
  hyperbolic lattices.
\newblock 2022.
\newblock arXiv:2204.05788 [math.GR].

\bibitem[L{\"u}c94]{Lueck1994}
Wolfgang L{\"u}ck.
\newblock {{\(L^ 2\)}}-{Betti} numbers of mapping tori and groups.
\newblock {\em Topology}, 33(2):203--214, 1994.

\bibitem[L{\"{u}}c98]{Lueck1998}
Wolfgang L{\"{u}}ck.
\newblock Dimension theory of arbitrary modules over finite von {N}eumann
  algebras and {$L^2$}-{B}etti numbers. {I}. {F}oundations.
\newblock {\em Journal f\"{u}r die Reine und Angewandte Mathematik},
  495:135--162, 1998.

\bibitem[L{\"u}c02]{Lueck2002}
Wolfgang L{\"u}ck.
\newblock {\em {{\(L^2\)}}-invariants: Theory and applications to geometry and
  {{\(K\)}}-theory}, volume~44 of {\em Ergeb. Math. Grenzgeb., 3. Folge}.
\newblock Berlin: Springer, 2002.

\bibitem[NS86]{NovikovShubin1986}
S.~P. Novikov and M.~A. Shubin.
\newblock Morse inequalities and von {N}eumann {${\rm II}_1$}-factors.
\newblock {\em Dokl. Akad. Nauk SSSR}, 289(2):289--292, 1986.

\bibitem[Pas85]{Passman1985}
Donald~S. Passman.
\newblock {\em The algebraic structure of group rings}.
\newblock Robert E. Krieger Publishing Co., Inc., Melbourne, FL, 1985.
\newblock Reprint of the 1977 original.

\bibitem[Ren89]{Renz1989}
Burkhardt Renz.
\newblock Geometric invariants and {HNN}-extensions.
\newblock 1989.
\newblock Group theory, {Proc}. {Conf}., {Singapore} 1987, 465-484 (1989).

\bibitem[Sch14]{Schreve2014}
Kevin Schreve.
\newblock The strong {Atiyah} conjecture for virtually cocompact special
  groups.
\newblock {\em Math. Ann.}, 359(3-4):629--636, 2014.

\bibitem[Sik87]{Sikorav1987}
J.-C. Sikorav.
\newblock Homologie de {N}ovikov associ\'ee \'a une classe de cohomologie
  r\'eelle de degr\'e un, 1987.
\newblock Th\'ese Orsay.

\bibitem[ST10]{SauerThom2010}
Roman Sauer and Andreas Thom.
\newblock A spectral sequence to compute {$L^2$}-{B}etti numbers of groups and
  groupoids.
\newblock {\em J. Lond. Math. Soc. (2)}, 81(3):747--773, 2010.

\bibitem[Tam57]{Tamari1957}
Dov Tamari.
\newblock A refined classification of semi-groups leading to generalized
  polynomial rings with a generalized degree concept.
\newblock In {\em Proceedings of the {I}nternational {C}ongress of
  {M}athematicians, 1954, Vol. 1, {A}msterdam}, pages 439--440. North-Holland,
  1957.

\end{thebibliography}

\end{document}